\newtheorem{teo}{Theorem}[section]
\newtheorem{thm}[teo]{Theorem}
\newtheorem{prop}[teo]{Proposition}
\newtheorem{lemma}[teo]{Lemma}
\newtheorem{cor}[teo]{Corollary}
\newtheorem{conj}[teo]{Conjecture}
\newtheorem{eg}[teo]{Example}
\newtheorem{defn}[teo]{Definition}
\newtheorem{rmk}[teo]{Remark}
\newtheorem{def-prop}[teo]{Definition-Proposition}
\newtheoremstyle{named}{}{}{\itshape}{}{\bfseries}{.}{.5em}{\thmnote{#3 }#1}
\theoremstyle{named}
\newtheorem*{namedques}{Question}
\newtheorem*{namedconj}{Conjecture}
\newcommand{\neutralize}[1]{\expandafter\let\csname c@#1\endcsname\count@}
\newenvironment{thmbis}[1]
  {%
   \neutralize{teo}\phantomsection
   \begin{thm}}
  {\end{thm}}
\numberwithin{equation}{section}
  \newcommand{\A}{\mathbb{A}}
  \newcommand{\C}{\mathbb{C}}
  \newcommand{\Q}{\mathbb{Q}}
  \newcommand{\R}{\mathbb{R}}
  \newcommand{\Z}{\mathbb{Z}}
  \newcommand{\bu}{\mathbf{u}}
  \renewcommand{\cong}{\simeq}
  \renewcommand{\bar}{\overline}
  \renewcommand{\tilde}{\widetilde}
  \renewcommand{\hat}{\widehat}
  \providecommand{\frac}[1]{\operatorname{Frac}(#1)}
  \renewcommand{\ker}{\operatorname{Ker}}
  \newcommand{\codim}{\operatorname{codim}}
  \renewcommand{\lim}{\operatorname{lim}}
  \newcommand{\Zar}{\mathrm{Zar}}
  \newcommand{\biZar}{\mathrm{biZar}}
  \newcommand{\GSp}{\mathrm{GSp}}
  \newcommand{\Sp}{\mathrm{Sp}}
\newcommand{\cA}{\mathcal{A}}
\newcommand{\cB}{\mathcal{B}}
\newcommand{\cC}{\mathcal{C}}
\newcommand{\cX}{\mathcal{X}}
\newcommand{\cY}{\mathcal{Y}}
\newcommand{\cZ}{\mathcal{Z}}
\newcommand{\pullbackcorner}[1][dr]{\save*!/#1-1.7pc/#1:(-1.5,1.5)@^{|-}\restore}
\newcommand\supervisor[1]{\def\@supervisor{#1}}
\newcounter{elno}
\renewcommand{\cong}{\simeq}
\begin{document}
\title[]{Generic rank of Betti map and unlikely intersections}
\author{Ziyang Gao}

\address{CNRS, IMJ-PRG, 4 place Jussieu, 75005 Paris, France}
\email{ziyang.gao@imj-prg.fr}
%\address{Department of Mathematics and Computer Science, University of Basel, Spiegelgasse 1, 4051 Basel, Switzerland}
%\email{philipp.habegger@unibas.ch}

\subjclass[2000]{11G10, 11G50, 14G25, 14K15}

\maketitle

\begin{abstract} Let $\cA \rightarrow S$ be an abelian scheme over an irreducible variety over $\C$ of relative dimension $g$. For any simply-connected subset $\Delta$ of $S^{\mathrm{an}}$ one can define the \textit{Betti map} from $\cA_{\Delta}$ to $\mathbb{T}^{2g}$, the real torus of dimension $2g$, by identifying each closed fiber of $\cA_{\Delta} \rightarrow \Delta$ with $\mathbb{T}^{2g}$ via the Betti homology. Computing the generic rank of the Betti map restricted to a subvariety $X$ of $\cA$ is useful to study Diophantine problems, \textit{e.g.} proving the Geometric Bogomolov Conjecture over char $0$ and studying the relative Manin--Mumford conjecture. In this paper we give a~geometric criterion to detect this rank. As an application we show that it is maximal after taking a large fibered power (if $X$ satisfies some conditions); it is an important step to prove the bound for the number of rational points on curves \cite{DGHMazur}. Another application is to answer a question of Andr\'{e}--Corvaja--Zannier and improve a result of Voisin. We also systematically study its link with the relative Manin-Mumford conjecture, reducing the latter to a simpler conjecture. Our tools are functional transcendence and unlikely intersections for mixed Shimura varieties.% (the universal abelian variety).
\end{abstract}

\tableofcontents

%% Section 1
\section{Introduction}
Let $S$ be an irreducible variety over $\mathbb{C}$, and let $\pi_S \colon \cA \rightarrow S$ be an abelian scheme of relative dimension $g$, namely a proper smooth group scheme whose fibers are abelian varieties.

For any $s \in S(\C)$, there exists an open neighborhood $\Delta \subseteq S^{\mathrm{an}}$ of $s$ with a real-analytic map, which we call the \textbf{Betti map},
\[
b_{\Delta} \colon \cA_{\Delta} = \pi_S^{-1}(\Delta) \rightarrow \mathbb{T}^{2g},
\]
where $\mathbb{T}^{2g}$ is the real torus of dimension $2g$. The precise definition will be given in \eqref{EqBettiMapActualScheme}, but we give a brief explanation here: Up to shrinking $\Delta$ we may assume that it is simply-connected. Then one can define a basis $\omega_1(s),\ldots,\omega_{2g}(s)$ of the period lattice of each fiber $s \in \Delta$ as holomorphic functions of $s$. Now each fiber $\cA_s = \pi_S^{-1}(s)$ can be identified with the complex torus $\C^g/\Z \omega_1(s)\oplus \cdots \oplus \Z\omega_{2g}(s)$, and each point $x \in \cA_s(\C)$ can be expressed as the class of $\sum_{i=1}^{2g}b_i(x) \omega_i(s)$ for real numbers $b_1(x),\ldots,b_{2g}(x)$. Then $b_{\Delta}(x)$ is defined to be the class of the $2g$-tuple $(b_1(x),\ldots,b_{2g}(x)) \in \R^{2g}$ modulo $\Z^{2g}$.

%;  \Gcomment{Add definition for Betti.}% An alternative way to see the Betti map is to define it as a canonical real-analytic map $b_{\tilde{S}} \colon \cA_{\tilde{S}} \rightarrow \mathbb{T}^{2g}$ where $\tilde{S} \rightarrow S^{\mathrm{an}}$ is the universal cover and $\cA_{\tilde{S}}$ be the pullback of $\cA \rightarrow S$ under the universal cover. The relation between these two points of view will be explained at the end of $\mathsection$\ref{SectionBettiRevisited}. For most applications one can use either $b_{\Delta}$ or $b_{\tilde{S}}$. We prefer to state the final results in terms of $b_{\Delta}$ in this paper.

Through the introduction, let $X$ be a closed irreducible subvariety of $\cA$ such that $\pi_S(X) = S$.

The \textbf{\textit{goal}} of this paper is to compute the generic rank of $b_{\Delta}|_{X \cap \cA_{\Delta}}$. We also systematically study the link between this rank and the relative Manin--Mumford conjecture, reducing the latter % As a byproduct we reduce the relative Manin-Mumford conjecture 
to a simpler conjecture on unlikely intersections.

First of all, $\mathrm{rank}_{\R} (\mathrm{d}b_{\Delta}|_X)_x$ must be an even number for each $x \in X^{\mathrm{sm}}(\C)$, as each $\omega_i(s)$ is holomorphic. So the question becomes giving a characterization for
\begin{equation}\label{EqBettiRankMaxIneqIntro}
\mathrm{rank}_{\R} (\mathrm{d}b_{\Delta}|_X) := \max_{x \in X^{\mathrm{sm}}(\C) \cap \cA_{\Delta}}(\mathrm{rank}_{\R}(\mathrm{d}b_{\Delta}|_{X\cap \cA_{\Delta}})_x) = 2l.
\end{equation}
Also it can be shown that $\mathrm{rank}_{\R} (\mathrm{d}b_{\Delta}|_X)$ does not depend on the choice of $\Delta$ by Sard's theorem; see the end of $\mathsection$\ref{SectionBettiRevisited} for a detailed explanation.

There is a naive bound for $\mathrm{rank}_{\R} (\mathrm{d}b_{\Delta}|_X)$ as follows. Let $\mathfrak{A}_g$ be the universal abelian variety over the moduli space $\A_g$ of abelian varieties. We have the modular map
\begin{equation}\label{EqModMapIntro}
\xymatrix{
\cA \ar[r]^{\iota} \ar[d]_{\pi_S} \pullbackcorner & \mathfrak{A}_g \ar[d]^{\pi} \\
S \ar[r]^{\iota_S} & \A_g.
%\iota \colon \cA \rightarrow \mathfrak{A}_g
}
\end{equation}
It is not hard to show that $b_{\Delta}$ factors through $\iota$. So $\mathrm{rank}_{\R} (\mathrm{d}b_{\Delta}|_X) \le 2 \min(\dim \iota(X), g)$.% Thus one can take $l \in \{1,\ldots, \min(\dim \iota(X), g)\}$ in \eqref{EqBettiRankMaxIneqIntro}.% So in what follows, it is safe to take $l \le \dim \iota(X)$.

The question to characterize \eqref{EqBettiRankMaxIneqIntro} was explicitly asked and systematically studied for the first time in \cite{ACZBetti}. The submersivity problem, \textit{i.e.} $l=g$, was solved in \textit{loc.cit} for all $\cA/S$ with $\mathrm{End}(\cA/S) = \Z$ and $\dim \iota_S(S) \ge g$. A clear connection to functional transcendence (Andr\'{e}'s independence of abelian logarithms \cite{AndreMumford-Tate-gr} and pure Ax-Schanuel \cite{MokAx-Schanuel-for}) was presented.

In our paper we completely solve the \textit{generic rank} problem by giving a criterion (equivalent condition) to \eqref{EqBettiRankMaxIneqIntro} for each $l$ \textit{in simple terms of the geometry of $X$}. Our approach is independent of \cite{ACZBetti}. It uses functional transcendence (in a different vein, mixed Ax-Schanuel \cite{GaoAxSchanuel}), combined with a finiteness result \cite[Theorem~1.4]{GaoAxSchanuel}. %We refer to the end of $\mathsection$\ref{SubsectionACZHistory} for more detailed comments. 
Here let us state the criterion in an equivalent (but simpler formulated) way: give a criterion to $\mathrm{rank}_{\R} (\mathrm{d}b_{\Delta}|_X) < 2l$ for each $l$.

The following notation will be used throughout the paper: Denote by $\cA_X$ the translate of an abelian subscheme of $\cA \rightarrow S$ by a torsion section (up to a finite covering of $S$) which contains $X$, minimal for this property. Then $\cA_X \rightarrow S$ itself is an abelian scheme (up to taking a finite covering of $S$), whose relative dimension we denote by $g_X$.% See $\mathsection$\ref{SectionNotation} for our convention of abelian subschemes and torsion sections.

\begin{thm}[Criterion to \eqref{EqBettiRankMaxIneqIntro}]\label{ThmCriterionDegIntro}
%Let $l \in \{1,\ldots,\min(\dim\iota(X),g)\}$. Then 
For each $l$, we have
%\begin{enumerate}
%\item[(i)] (Criterion to \eqref{EqBettiRankMaxIneqIntro}) 
$\mathrm{rank}_{\R} (\mathrm{d}b_{\Delta}|_X) < 2 l$ if and only if the following condition holds: %up to taking a finite covering of $S$ 
there exists an abelian subscheme $\cB$ of $\cA_X \rightarrow S$ (of relative dimension $g_{\cB}$) such that for the quotient abelian scheme $p_{\cB} \colon \cA_X \rightarrow \cA_X/\cB$ and the modular map $\iota_{/\cB} \colon \cA_X/\cB \rightarrow \mathfrak{A}_{g_X - g_{\cB}}$, we have $\dim (\iota_{/\cB} \circ p_{\cB})(X) < l - g_{\cB}$.
\begin{equation}
\xymatrix{
\cA_X \ar[r]^-{p_{\cB}} \ar[d]_{\pi_S} & \cA_X/\cB \ar[r]^-{\iota_{/\cB}} \ar[d] \pullbackcorner & \mathfrak{A}_{g_X - g_{\cB}} \ar[d] \\
S \ar[r]^{\mathrm{id}_S} & S \ar[r]^-{\iota_{/\cB,S}} & \A_{g_X - g_{\cB}}
}%,
\end{equation}
%\item[(ii)] There exists a Zariski closed subset $X_{<2l}$ of $X$ such that $\mathrm{rank}_{\R}(\mathrm{d}b_{\Delta}|_X) < 2l$ if and only if $X_{<2l} = X$.%$\{x \in X^{\mathrm{sm}}(\C) : \mathrm{rank}_{\R} (\mathrm{d}b_{\Delta}|_X)_x < 2l\} = X_{<2l}(\C) \cap \cA_{\Delta}$.
%\end{enumerate}
%we have $\dim (\iota_{/\cB} \circ p_{\cB})(X) < l - g_{\cB}$.
\end{thm}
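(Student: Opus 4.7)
The plan is to prove the two implications of the criterion separately: the ``if'' direction is a direct decomposition of the Betti map via the quotient $\cA_X \to \cA_X/\cB$, whereas the ``only if'' direction is where the mixed Ax--Schanuel theorem and the finiteness result \cite[Theorem~1.4]{GaoAxSchanuel} enter. As a preliminary reduction I would replace $\cA$ by $\cA_X$: since $\cA_X \cap \cA_{\Delta}$ is an abelian subscheme translate inside $\cA_{\Delta}$, the restriction of $b_{\Delta}$ lands in a rank-$2g_X$ subtorus of $\mathbb{T}^{2g}$ after an $\R$-linear change of basis, and so the generic real rank of $db_{\Delta}|_X$ is unchanged if we work with the Betti map attached to $\cA_X \to S$ instead.

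For the ``if'' direction, assume such a $\cB$ exists. Shrinking $\Delta$ if necessary, the short exact sequence $0 \to \cB \to \cA_X \to \cA_X/\cB \to 0$ splits on Betti lattices after a change of basis of $\mathbb{T}^{2g_X}$, and in these coordinates the $\cA_X$-Betti map becomes the product
\[
(b_{\Delta}^{\cB},\, b_{\Delta}^{\cA_X/\cB}) \colon \cA_{X,\Delta} \longrightarrow \mathbb{T}^{2g_{\cB}} \times \mathbb{T}^{2(g_X-g_{\cB})}.
\]
On $X$ the first factor has real rank at most $2g_{\cB}$, while the second factors through $\iota_{/\cB}\circ p_{\cB}$ and therefore has real rank at most $2\dim(\iota_{/\cB}\circ p_{\cB})(X) < 2(l-g_{\cB})$. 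Summing yields $\mathrm{rank}_{\R}(db_{\Delta}|_X) < 2l$.

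For the ``only if'' direction, suppose $\mathrm{rank}_{\R}(db_{\Delta}|_X) < 2l$. I would lift $X$ to a complex-analytic germ $\tilde X$ in $\cA_{\tilde S}$, then push $\varphi(\tilde X)$ up into the mixed-Shimura uniformization $u \colon \C^{g_X} \times \mathbb{H}_{g_X} \to \mathfrak{A}_{g_X}^{\mathrm{an}}$ of the ambient universal abelian variety. A standard calculation translates the Betti rank hypothesis into an ``excess intersection'' dimension drop for the graph of $u$ against an algebraic lift of $\varphi(X)$. Mixed Ax--Schanuel \cite{GaoAxSchanuel} then forces $\varphi(X)$ to be contained in a proper weakly special subvariety $Y$ of $\mathfrak{A}_{g_X}$. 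The structure theorem for weakly special subvarieties presents $Y$, up to a torsion translation, as the preimage under a quotient $\mathfrak{A}_{g_X}|_T \to \mathfrak{A}_{g_X}|_T/\cB^{\sharp}$ of a section constant in Betti coordinates over a Shimura subvariety $T \subseteq \A_{g_X}$ containing $\varphi_S(S)$; pulling $\cB^{\sharp}$ back along $\varphi_S$ produces the desired $\cB \subseteq \cA_X$, and the dimension bound on $Y$ produced by Ax--Schanuel translates exactly into $\dim(\iota_{/\cB}\circ p_{\cB})(X) < l - g_{\cB}$. The finiteness result \cite[Theorem~1.4]{GaoAxSchanuel} is used to guarantee that, among the candidate weakly special loci produced by Ax--Schanuel, there is one that actually descends to an abelian subscheme globally defined over $S$.

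The main obstacle will be the dimension bookkeeping in the Ax--Schanuel step: the input is an inequality between real dimensions under a non-algebraic projection, whereas the output of mixed Ax--Schanuel controls complex dimensions of analytic-algebraic intersections in the uniformization. Passing cleanly between the two, and handling uniformly the two ranges $l \in \{1,\ldots,\min(\dim\varphi(X),g)\}$ and $l = \dim X$ (the latter allowing $\varphi|_X$ to be non-finite onto its image), is the technical heart of the argument. A secondary difficulty is the descent of the abelian subscheme $\cB^{\sharp}$ obtained inside the mixed Shimura variety to one defined over all of $S$, for which the minimality built into the definition of $\cA_X$, combined with \cite[Theorem~1.4]{GaoAxSchanuel}, has to be leveraged.
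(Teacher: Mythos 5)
Your ``if'' direction is fine and is essentially the paper's own (one-line) argument: since $b_{\Delta}$ restricted to $\cA_X$ splits, up to the modular map, along $\cB$ and $\cA_X/\cB$, one gets $\mathrm{rank}_{\R}(\mathrm{d}b_{\Delta}|_X) \le 2g_{\cB} + 2\dim(\iota_{/\cB}\circ p_{\cB})(X) < 2l$; the preliminary reduction to $\cA_X$ is also harmless.

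The ``only if'' direction, however, has a genuine structural gap. Mixed Ax--Schanuel does \emph{not} force $\varphi(X)$ to be contained in a proper weakly special subvariety whose shape encodes $\cB$. What it yields (Proposition~\ref{PropDegLocus}) is pointwise information: through each smooth point where the rank drops, the positive-dimensional Betti-fiber germ has Zariski closure $Y \subseteq \varphi(X)$ with $\dim\langle Y\rangle_{\mathrm{sg}} - \dim\pi(Y) < \dim Y + (l-d)$; these $Y$ vary with the point, are typically much smaller than $\varphi(X)$, and are the pieces contained in fiber-like bi-algebraic sets --- they do not assemble a priori into one weakly special subvariety containing $\varphi(X)$. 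Indeed your shortcut fails on the paper's own Example~\ref{EgCounterexampleACZ}: there $\Z\xi$ is Zariski dense, so the smallest weakly special subvariety of $\mathfrak{A}_4$ containing $\xi(S)$ is all of $\mathfrak{A}_4|_{\A_2\times\A_2}$ and yields no useful $\cB$, whereas the correct $\cB$ has relative dimension $2$ and is detected only through the degenerate slices $\xi(\A_2\times\{s_2\})$ of $X$. The passage from ``a degenerate $Y$ through every point'' to a single global $\cB$ with $\dim(\iota_{/\cB}\circ p_{\cB})(X) < l - g_{\cB}$ is precisely the content of the paper's $\mathsection$7--8: the finiteness theorem \cite[Theorem~1.4]{GaoAxSchanuel} (via weak optimality) is used to write $X^{\mathrm{deg}}(t)$ as a finite union of Zariski closed loci $E_h$, hence $X = E_{h_*}$ for a single datum $((Q_*,\cY_*^+),N_*)$ when $X = X^{\mathrm{deg}}(t)$ (Theorem~\ref{ThmDegeneracyLocusZarClosed}, Proposition~\ref{PropCriterionXDegenerate}), and only then is the normal subgroup translated into an abelian subscheme $\cB$ via the quotient Shimura datum construction and Deligne's fixed part theorem (Theorem~\ref{ThmCriterionXDegenerateGeom}); so your description of the role of \cite[Theorem~1.4]{GaoAxSchanuel} as a mere descent-over-$S$ device is also off. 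Finally, the case $l = \dim X$ requires the separate dichotomy on whether $\varphi|_X$ is generically finite (part (1) of Theorem~\ref{ThmDegLocus}), which your outline flags but does not actually handle.
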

%Part (ii) says that the locus where the Betti rank decreases with respect to the generic one is defined by algebraic equations. This is not obvious \textit{a priori}, and not even the fact that this locus is complex analytic is obvious (left as an open issue in \cite{CorvajaMasserZannier2018}).

%In fact we can prove more: we are able to write the locus of $X$ where $\mathrm{rank}_{\R}(\mathrm{d}b_{\Delta}|_{X\cap \cA_{\Delta}})_x$ is smaller than $2l$ for each $l$ (Theorem~\ref{ThmDegLocusIntro}).

This geometric criterion to detect the Betti rank can be simplified in some cases, \textit{e.g.}%  for each $l \le \dim \iota(X)$
\begin{equation}\label{EqCriterionDegIntroSimpleCases}
\begin{array}{l}
\mathrm{rank}_{\R}(\mathrm{d}b_{\Delta}|_X) = 2 \min(\dim \iota(X), \dim \langle X \rangle_{\mathrm{gen-sp}} - \dim S) \\
 \qquad \text{if }\dim \iota_S(S) = 1\text{ or }\iota_S(S)\text{ has simple connected algebraic monodromy group}.
\end{array}
\end{equation}
See Definition~\ref{DefnGenSp} for the definition of $\langle X \rangle_{\mathrm{gen-sp}}$ and Corollary~\ref{CorBettiRankSimpleBase} for the statement.

Betti map was first studied and used in \cite{ZannierUnlikelyIntersectons}. Then it was used to study the relative Manin-Mumford conjecture by Bertrand, Corvaja, Masser, Pillay and Zannier in a series of works \cite{MasserZannierTorsionPointOnSqEC, MASSER2014116, MasserZannierRelMMSimpleSur, BMPZRelativeMM, CorvajaMasserZannier2018, MasserZannierRMMoverCurve}, to prove the geometric Bogomolov conjecture over char $0$ by Gao-Habegger \cite{GHGBC} and Cantat-Gao-Habegger-Xie \cite{CGHXGBC} with \eqref{EqBettiRankMaxIneqIntro} for $l = \dim X - \dim S$, and to prove the denseness of torsion points on sections of Lagrangian fibrations by Voisin \cite{VoisinBetti} using Andr\'{e}-Corvaja-Zannier's result \cite{ACZBetti} on \eqref{EqBettiRankMaxIneqIntro} for $l = g \le 4$.

\subsection{Two applications}%results on the generic rank of the Betti map}
We see two applications of Theorem~\ref{ThmCriterionDegIntro} in this subsection.

As shown by \cite[Theorem~1.6]{DGHMazur}, in some number theory and algebro-geometric applications, it is particularly important to understand when \eqref{EqBettiRankMaxIneqIntro} holds for $l = \dim X$, namely
%we have 
\begin{equation}\label{EqBettiRankMaxDIneqIntro}
\mathrm{rank}_{\R} (\mathrm{d}b_{\Delta}|_X) = 2 \dim X.
\end{equation}

As $\mathrm{rank}_{\R} (\mathrm{d}b_{\Delta}|_X) \le 2 \min(\dim \iota(X), g)$ always holds, \eqref{EqBettiRankMaxDIneqIntro} requires $\iota|_X$ to be generically finite and that $\dim X  \le g$. But in applications we often need to handle $X$ with $\dim X >g$.

%In order for \eqref{EqBettiRankMaxDIneqIntro} to hold , $\iota|_X$ must be generically finite because $\mathrm{rank}_{\R} (\mathrm{d}b_{\Delta}|_X) \le 2 \dim \iota(X)$. Assume that $\iota|_X$ is generically finite. Even under this assumption it is clearly true that achieving \eqref{EqBettiRankMaxDIneqIntro} requires more. For a toy case, if $\dim X = \dim \iota(X) > g$, then \eqref{EqBettiRankMaxDIneqIntro} cannot hold since its left hand side is at most $2g$.

Because of this problem, our main result towards \eqref{EqBettiRankMaxDIneqIntro} is in the following philosophy. Instead of proving  \eqref{EqBettiRankMaxDIneqIntro} for $X$, we raise $X$ to a large enough fibered power so that  \eqref{EqBettiRankMaxDIneqIntro} holds for this fibered power. In this process we need to put some extra assumptions on $X$.%, most of which being mild.% Note that part (2) of Theorem~\ref{ThmACZIntro} says that one cannot expect  \eqref{EqBettiRankMaxDIneqIntro} to hold in general, so we still need to put some extra assumptions on $X$.

Let us explain this in details. For any integer $m \ge 1$, denote by $\cA^{[m]} = \cA\times_S \ldots \times_S \cA$ ($m$-copies), $X^{[m]} = X \times_S \ldots \times_S X$ ($m$-copies) and by $b^{[m]}_{\Delta} = (b_{\Delta}, \ldots, b_{\Delta}) \colon \cA^{[m]}_{\Delta} \rightarrow \mathbb{T}^{2mg}$. Let $\mathscr{D}_m^{\cA} \colon \cA^{[m+1]} \rightarrow \cA^{[m]}$ be the $m$-th Faltings-Zhang map fiberwise defined by $(P_0, P_1, \ldots, P_m) \mapsto (P_1 - P_0, \ldots, P_m - P_0)$.

We start with the following example. Let $\mathbb{M}_g$ be the moduli space of curves of genus $g$. Up to taking a finite covering, we have a universal curve $\mathfrak{C}_g \rightarrow \mathbb{M}_g$. Let $\A_g$ be the moduli space of principally polarized abelian varieties of dimension $g$. Up to taking a finite covering, we have a universal abelian variety $\pi \colon \mathfrak{A}_g \rightarrow \A_g$. Then we have% Embed $\mathfrak{C}_g \hookrightarrow \mathrm{Jac}(\mathfrak{C}_g)$ via $\epsilon$, then we obtain a diagram
\[
\xymatrix{
\mathrm{Jac}(\mathfrak{C}_g) \ar[r]^-{\iota} \ar[d] \pullbackcorner & \mathfrak{A}_g \ar[d]^{\pi} \\
\mathbb{M}_g \ar[r] & \A_g.
}
\]
By abuse of notation denote by $\mathbb{M}_g$ the image of the bottom arrow.

Let $S$ be an irreducible variety with a generically finite (not necessarily dominant) morphism $S \rightarrow \mathbb{M}_g$ such that $\mathfrak{C}_g \times_{\mathbb{M}_g} S \rightarrow S$ admits a section $\epsilon$. Take $\cA = \mathfrak{A}_g \times_{\A_g} S$, and let $\mathfrak{C}_S = \mathfrak{C}_g \times_{\mathcal{M}_g}S$. We have the fiberwise Abel-Jacobi embedding $j_{\epsilon} \colon \mathfrak{C}_S \hookrightarrow \mathrm{Jac}(\mathfrak{C}_S)$ via $\epsilon$, and by abuse of notation denote by $\mathfrak{C}_S$ the image of $\mathfrak{C}_S$ under $\iota\circ j_{\epsilon}$. Denote by $\mathfrak{C}_S-\mathfrak{C}_S = \mathscr{D}_1^{\cA}(\mathfrak{C}_S)$.

\begin{thm}\label{ThmUnivCurveBettiRankIntro}
%Let $S \subseteq \mathbb{M}_g$ be a closed irreducible subvariety, and let $\cA = \pi^{-1}(S)$, $\mathfrak{C}_S = \mathfrak{C}_g \times_{\mathcal{M}_g}S$. 
Let $S$, $\cA$ and $\mathfrak{C}_S$ as above.
For any $m \ge 1$, we have%denote by $\cA^{[m]} = \cA \times_S \ldots \times_S \cA$ ($m$-copies), $\mathfrak{C}_S^{[m]} = \mathfrak{C}_S \times_S \ldots \times_S \mathfrak{C}_S$ ($m$-copies), and $b_{\Delta}^{[m]} = (b_{\Delta}, \ldots, b_{\Delta}) \colon \cA^{[m]}_{\Delta} \rightarrow \mathbb{T}^{2mg}$. Then%Assume $g \ge 2$. Then
\begin{enumerate}
\item[(i)] $\mathrm{rank}_{\R}(\mathrm{d}b^{[m]}_{\Delta}|_{\mathfrak{C}_S^{[m]}}) = 2 \dim (\mathfrak{C}_S^{[m]})$ for all $m \ge \dim S$ if $g \ge 2$.
\item[(ii)] $\mathrm{rank}_{\R}(\mathrm{d}b^{[m]}_{\Delta}|_{(\mathfrak{C}_S - \mathfrak{C}_S)^{[m]}}) = 2 \dim (\mathfrak{C}_S - \mathfrak{C}_S)^{[m]}$ for all $m \ge \dim S$ if  $g \ge 3$.
\end{enumerate}
\end{thm}
Theorem~\ref{ThmUnivCurveBettiRankIntro} is a particular case of part (i) of the following Theorem~\ref{ThmFaltingsZhangBettiMapIntro} (applied to $\iota = \mathrm{id}$, $X = \mathfrak{C}_S$ and $X = \mathfrak{C}_S - \mathfrak{C}_S$), which is the general result towards \eqref{EqBettiRankMaxDIneqIntro}.

\begin{thm}\label{ThmFaltingsZhangBettiMapIntro}
Assume that $\iota|_X$ is generically finite. Assume furthermore that $X$ satisfies:% the following conditions: 
\begin{enumerate}
\item[(a)] We have $\dim X > \dim S$.
\item[(b)] %The subset $\Z X = \cup_{N \in \Z} \{[N]x : x \in X(\C)\}$ is Zariski dense in $\cA$.
For each $s \in S(\C)$, $X_s$ generates $\cA_s$.
\item[(c)] 
We have $X + \cA' \not\subseteq X$ for any non-isotrivial abelian subscheme $\cA'$ of $\cA \rightarrow S$.
\end{enumerate}
Then we have
\begin{enumerate}
\item[(i)] $\mathrm{rank}_{\R}(\mathrm{d}b^{[m]}_{\Delta}|_{X^{[m]}}) = 2 \dim X^{[m]}$ for all $m \ge \dim S$.
\item[(ii)] %If furthermore $X$ contains the image of $S$ under a constant section $\sigma$ of $\cA \rightarrow S$, then 
$\mathrm{rank}_{\R}(\mathrm{d}b^{[m]}_{\Delta}|_{\mathscr{D}_m^{\cA}(X^{[m+1]})}) = 2 \dim \mathscr{D}_m^{\cA}(X^{[m+1]})$  for all $m \ge \dim X$ if $\iota$ is quasi-finite.
\end{enumerate}
\end{thm}
This theorem follows directly from Theorem~\ref{ThmFaltingsZhang}, applied to $t=0$. In practice, the bound for $m$ can often be improved; see Remark~\ref{RmkDegSimpleBase} for some cases for Theorem~\ref{ThmUnivCurveBettiRankIntro} and Theorem~\ref{ThmUnivCurveBettiRankIntroBis}. Hypothesis (a) is crucial: if $X$ is the image of a multi-section of $\cA \rightarrow S$, then $X^{[m]}$ is contained in the diagonal of $\cA \rightarrow \cA^{[m]}$, so essentially no new objects are constructed with the operations.% This is why we prefer to state the ACZ question in the current form.

%Theorem~\ref{ThmFaltingsZhangBettiMapIntro} says that up to raising to a large fibered power, the generic rank of the Betti map is always maximal. This solves the problem caused by part (2) of Theorem~\ref{ThmACZIntro} for the ACZ conjecture. However we need to assume $X$ \textit{not} to be a multi-section. Thus we state the ACZ conjecture in the current form.

We close this subsection with the following result, which is a direct corollary of part (ii) of Theorem~\ref{ThmFaltingsZhangBettiMapIntro} applied to $\iota = \mathrm{id}$ and $X = \mathfrak{C}_S$.
\begin{thmbis}{ThmUnivCurveBettiRankIntro}\label{ThmUnivCurveBettiRankIntroBis}
Under the notation of Theorem~\ref{ThmUnivCurveBettiRankIntro}. Let $\mathscr{D}_m := \mathscr{D}_m^{\mathfrak{A}_g}$, namely
\[
\mathscr{D}_m \colon \underbrace{\mathfrak{A}_{g} \times_{\mathbb{A}_{g}} \mathfrak{A}_{g} \times_{\mathbb{A}_{g}} \ldots \times_{\mathbb{A}_{g}} \mathfrak{A}_{g}}_{(m+1)\text{-copies}} \rightarrow \underbrace{\mathfrak{A}_{g} \times_{\mathbb{A}_{g}} \ldots \times_{\mathbb{A}_{g}} \mathfrak{A}_{g}}_{m\text{-coplies}}
\]
fiberwise defined by $(P_0, P_1, \ldots, P_m) \mapsto (P_1 - P_0, \ldots, P_m - P_0)$. Assume $g \ge 2$. Then we have $\mathrm{rank}_{\R}(\mathrm{d}b^{[m]}_{\Delta}|_{\mathscr{D}_m(\mathfrak{C}_S^{[m+1]})}) = 2 \dim \mathscr{D}_m(\mathfrak{C}_S^{[m+1]})$ for all $m \ge \dim (\mathfrak{C}_S) = 1 + \dim S$.
\end{thmbis}

Theorem~\ref{ThmUnivCurveBettiRankIntroBis} will be applied in \cite{DGHMazur}, as an important step, to prove: For a smooth projective curve $C$ of genus $g \ge 2$ defined over a number field $K$, $\#C(K)$ is bounded only in terms of $g$, $[K:\Q]$ and the Mordell-Weil rank.

Our second application is to answer a question of Andr\'{e}--Corvaja--Zannier \cite{ACZBetti}.%\cite[Conjecture~2.1.2]{ACZBetti}.

\begin{namedques}[ACZ]\label{ConjACZ}
Assume that $\cA/S$ has no fixed part over any finite covering of $S$ and that $\Z X = \cup_{N \in \Z}\{[N]x : x\in X(\C)\}$ is Zariski dense in $\cA$. Does \eqref{EqBettiRankMaxIneqIntro} hold for $l = \min(\dim \iota(X), g)$?
\end{namedques}

Many cases of this question were proved to be true when $\dim \iota_S(S) \ge g$ in \cite{ACZBetti}, \textit{e.g.} when $g\le 3$ or any $\cA/S$ with $\mathrm{End}(\cA/S) = \Z$.%; see $\mathsection$\ref{SubsectionACZHistory} for more details.

We hereby answer the ACZ~Question: it has positive answer in many cases, but may be false in general.% Recall our notation \eqref{EqModMapIntro}.
\begin{thm}\label{ThmACZIntro}
We have:
\begin{enumerate}
\item[(i)] The ACZ~Question has a positive answer if:
\begin{enumerate}
\item[(a)] Either $\cA \rightarrow S$ is geometrically simple;
\item[(b)] Or each Hodge generic curve $C \subseteq \iota_S(S)$ satisfies the following property: $\iota(\cA)|_C := \pi^{-1}(C) \rightarrow C$ has no fixed part over any finite covering of $C$.%\footnote{We say that a curve $C \subseteq \iota_S(S)$ is Hodge generic if the generic Mumford-Tate group of $C$ coincides with the generic Mumford-Tate group of $\iota_S(S)$.}
\end{enumerate}
\item[(ii)] There exist a closed irreducible subvariety $S \subseteq \A_4$ of dimension $4$ and a section $\xi$ of $\mathfrak{A}_4|_S \rightarrow S$ such that $\mathfrak{A}_4|_S \rightarrow S$ has no fixed part over any finite covering of $S$, $\Z\xi$ is Zariski dense in $\mathfrak{A}_4|_S$, and $\mathrm{rank}_{\R}(\mathrm{d}b_{\Delta}|_{\xi(S)\cap \cA_{\Delta}})_x < 8$ for all $x \in \xi(S)$.
\end{enumerate}
\end{thm}
Part (i) is Theorem~\ref{ThmACZ}; see Remark~\ref{RmkACZ} for (1)(b). Part (1)(a) for $l = g$, combined with \cite[Proposition~2.1.1]{ACZBetti}, shows that \cite[Theorem~0.3]{VoisinBetti} holds without the dimension assumption because by Lemma~4.5 of \textit{loc.cit} the abelian scheme in question is geometrically simple. Part (ii) is constructed in Example~\ref{EgCounterexampleACZ}; it is closely related to \cite[Remark~6.2.1]{ACZBetti}. Note that this counterexample is the simplest one: In this example $\cA/S$ has maximal variation and $X$ is the image of a section, and by \cite[Theorem~2.3.1]{ACZBetti} no such examples exist for $g \le 3$.

\subsection{The $t$-th degeneracy locus}
Our method to study the generic rank of the Betti map is to translate the problem into studying the \textit{$t$-th degeneracy locus} defined below. Let us explain it in this subsection. Recall our abelian scheme $\pi_S \colon \cA \rightarrow S$.

\begin{defn}\label{DefnSubvarOfsgType}
A closed irreducible subvariety $Z$ of $\cA$ is called a \textbf{generically special subvariety of sg type} of $\cA$ if there exists a finite covering $S' \rightarrow S$, inducing a morphism $\rho \colon \cA' = \cA \times_S S' \rightarrow \cA$, such that $Z = \rho(\sigma' + \sigma_0' + \cB')$, where $\cB'$ is an abelian subscheme of $\cA'/S'$, $\sigma'$ is a torsion section of $\cA'/S'$, and $\sigma_0'$ is a constant section of $\cA'/S'$.
\end{defn}
We briefly explain the meaning of constant section here. Let $C' \times S'$ be the largest constant abelian subscheme of $\cA'/S'$. We say that a section $\sigma_0' \colon S' \rightarrow \cA'$ is a constant section if there exists $c' \in C'(\mathbb{C})$ such that $\sigma_0$ is the composite of $S' \rightarrow C' \times S'$, $s' \mapsto (c',s')$, and the inclusion $C' \times S' \subseteq \cA'$.

Definition~\ref{DefnSubvarOfsgType} is closely related to the \textit{generically special subvarieties} defined in \cite[Definition~1.2]{GHGBC}. See Appendix~\ref{SectionAppendixParticularBase} for some discussion.

For any locally closed irreducible subvariety $Y$ of $\cA$, denote by $\langle Y \rangle_{\mathrm{sg}}$ the smallest generically special subvariety of sg type of $\cA|_{\pi_S(Y)} = \pi_S^{-1}(\pi_S(Y))$ which contains $Y$.

\begin{defn}\label{DefnDegeneracyLocus}
Let $X$ be a closed irreducible subvariety of $\cA$. For any $t \in \mathbb{Z}$, define the \textbf{$t$-th degeneracy locus} of $X$, denoted by $X^{\mathrm{deg}}(t)$, to be the union of positive dimensional closed irreducible subvarieties $Y \subseteq X$ such that %$\codim_{\langle Y \rangle_{\mathrm{sg}}}(Y) < \dim \pi(Y) + t$. 
$\dim \langle Y \rangle_{\mathrm{sg}} - \dim \pi_S(Y) < \dim Y + t$. When $t = 0$, we abbreviate $X^{\mathrm{deg}}(0)$ as $X^{\mathrm{deg}}$. We say that $X$ is \textbf{degenerate} if $X^{\mathrm{deg}}$ is Zariski dense in $X$.
\end{defn}

Note that $X^{\mathrm{deg}} = X$ clearly holds if $X$ is a multi-section and $g < \dim S$.

The locus on which $\mathrm{rank}_{\R} (\mathrm{d}b_{\Delta}|_X)_x$ is smaller than expected is precisely $X^{\mathrm{deg}}(t)$ for some $t \le 0$. More precisely we have: (recall the modular map $\iota \colon \cA \rightarrow \mathfrak{A}_g$ \eqref{EqModularMapIntro} and the naive bound $\mathrm{rank}_{\R} (\mathrm{d}b_{\Delta}|_X) \le 2 \dim \iota(X)$)
\begin{thm}\label{ThmDegLocusIntro}
%Let $x \in X^{\mathrm{sm}}(\C) \cap \cA_{\Delta}$. Then
For each integer $l \le \dim \iota(X)$, we have
\[
\mathrm{rank}_{\R} (\mathrm{d}b_{\Delta}|_X) < 2l \Leftrightarrow  X^{\mathrm{deg}}(l-\dim X)\text{ is Zariski dense in }X.
\]
\end{thm}
%In particular $\{x \in X^{\mathrm{sm}}(\C) : \mathrm{rank}_{\R} (\mathrm{d}b_{\Delta}|_X)_x < 2l\}$ is Zariski closed in $X^{\mathrm{sm}}$ by Theorem~\ref{ThmZarClosedXdegIntro}.

This is not yet satisfactory as the $X^{\mathrm{deg}}(t)$ thus defined is \textit{a priori} a complicated subset of $X$. However we show that they are all Zariski closed in $X$.
\begin{thm}\label{ThmZarClosedXdegIntro}
%Assume $S$ is a locally closed irreducible subvariety of $\A_g$ and $\cA = \mathfrak{A}_g\times_{\A_g} S$. Then 
The set $X^{\mathrm{deg}}(t)$ is Zariski closed in $X$ for each $t \in \Z$.
\end{thm}
Both theorems will be proved in $\mathsection$\ref{SectionAppConjACZ}. Before the treatment of the general case, we will prove, for the case $X \subseteq \mathfrak{A}_g$, Theorem~\ref{ThmDegLocusIntro} in $\mathsection$\ref{SectionSubvarBettiNonMaxRank} and Theorem~\ref{ThmZarClosedXdegIntro}  in $\mathsection$\ref{SectionDegeneracyLocusClosed}.% Note that Theorem~\ref{ThmCriterionDegIntro}.(ii) follows directly from Theorem~\ref{ThmDegLocusIntro} and \ref{ThmZarClosedXdegIntro}.

%In fact our proof of Theorem~\ref{ThmCriterionDegIntro} is via Theorem~\ref{ThmDegLocusIntro}: We first prove Theorem~\ref{ThmDegLocusIntro}, so that the question of the generic rank of the Betti map becomes characterizing for which $X$ we have $\iota(X)^{\mathrm{deg}}(t) = \iota(X)$ ($t \le 0$). Then we prove the desired characterization in Theorem~\ref{ThmCriterionXDegenerateGeom} and thus obtain Theorem~\ref{ThmCriterionDegIntro}.

\subsection{Relation with the relative Manin-Mumford conjecture}
Another application of the $t$-th degeneracy locus is for the relative Manin-Mumford conjecture. In this application we need to take $t = 1$. Let us state the result.

Denote by $\cA_{\mathrm{tor}}$ the set of points $x \in \cA(\C)$ such that $[N]x$ lies in the zero section of $\cA \rightarrow S$ for some integer $N$. Zannier \cite{ZannierUnlikelyIntersectons} proposed the following relative Manin-Mumford conjecture.
\begin{namedconj}[Relative Manin-Mumford]\label{ConjRelativeMMIntro}
%Let $\cA_X$ be the translate of an abelian subscheme of $\cA \rightarrow S$ by a torsion section which contains $X$, minimal for this property. 
Assume that $\Z X:= \bigcup_{N \in \Z}\{[N]x: x \in X(\C)\}$ is Zariski dense in $\cA$. If $(X\cap \cA_{\mathrm{tor}})^{\Zar} = X$, then $\codim_{\cA}(X) \le \dim S$.
\end{namedconj}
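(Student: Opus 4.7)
The plan is to deduce the conjecture via the Pila--Zannier strategy in the relative setting, with the Betti map bridging arithmetic and real-analytic geometry. The key observation is that a point $x \in \cA_{\Delta}$ is a torsion point of order dividing $N$ if and only if its Betti image $b_{\Delta}(x)$ lies in $(1/N)\Z^{2g}/\Z^{2g} \subset \mathbb{T}^{2g}$. Thus the Zariski density of $X \cap \cA_{\mathrm{tor}}$ in $X$ translates into density of rational points in $b_{\Delta}(X \cap \cA_{\Delta})$, and the plan is to combine this with the o-minimal counting theorem of Pila--Wilkie (applied in $\R_{\mathrm{an},\mathrm{exp}}$) and the Galois orbit lower bounds for torsion points (Masser) to force a contradiction unless $\dim X \ge g$.

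First I would reduce immediately to the case $\dim X < g$: otherwise $\codim_{\cA}(X) = g + \dim S - \dim X \le \dim S$ and the conclusion is automatic. Granting $\dim X < g$, the assumption $\overline{\Z X} = \cA$ forces $\cA_X = \cA$, and the criterion of Theorem~\ref{ThmCriterionDegIntro} applied with $l = \dim X$ should yield $\mathrm{rank}_{\R}(\mathrm{d}b_{\Delta}|_X) = 2\dim X$ off the degeneracy locus $X^{\mathrm{deg}}(1)$, the $+1$ reflecting the unlikely-intersection count needed for the arithmetic conclusion. On the resulting Zariski open locus of $X$, $b|_X$ is locally submersive onto a $2\dim X$-dimensional real-analytic piece of $\mathbb{T}^{2g}$; a standard Pila--Zannier counting argument then shows that rational points of bounded height in $b_{\Delta}(X)$ grow only polylogarithmically off algebraic subvarieties, while the Galois orbit of a torsion point of order $N$ has cardinality at least $N^\delta$ for some $\delta > 0$ uniform in the family. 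The two estimates force the torsion points on $X$ outside $X^{\mathrm{deg}}(1)$ to be non-Zariski-dense, so the hypothesis $\overline{X \cap \cA_{\mathrm{tor}}} = X$ compels $X = X^{\mathrm{deg}}(1)$.

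At this stage Theorem~\ref{ThmZarClosedXdegIntro} becomes essential: after composing with the modular map $\varphi$ one reduces to the setting $S \subseteq \A_g$ and $\cA = \mathfrak{A}_g|_S$, so $X^{\mathrm{deg}}(1)$ is genuinely Zariski closed. By Definition~\ref{DefnDegeneracyLocus}, $X$ is then covered by positive-dimensional subvarieties $Y$ with $\dim \langle Y \rangle_{\mathrm{sg}} \le \dim Y + \dim \pi_S(Y)$, and each $\langle Y \rangle_{\mathrm{sg}}$ is of the form $\sigma + \sigma_0 + \cB$ for an abelian subscheme $\cB$, a torsion section $\sigma$, and a constant section $\sigma_0$. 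One would then descend along the quotient $\cA \to \cA/\cB$: the image of $X$ has strictly smaller relative dimension, the dense-torsion property descends, and an induction on $(g, \dim \cA)$ should close the argument, provided the inductive hypothesis is still the Relative Manin--Mumford statement (or a slight strengthening thereof).

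The main obstacle, and the reason one can only hope for a reduction here, is that after descent the problem concentrates on how generically special subvarieties of sg type can accumulate torsion in a way still consistent with $\overline{\Z X} = \cA$. Ruling this out is precisely a Pink/Zilber--Pink-style unlikely-intersection statement for the mixed Shimura variety $\mathfrak{A}_g$: the functional transcendence inputs (mixed Ax--Schanuel \cite{GaoAxSchanuel}) and the finiteness statement \cite[Theorem~1.4]{GaoAxSchanuel} handle the geometric side, but uniform arithmetic control on the special points appearing in this mixed setting is the genuinely open ingredient, and it is this that constitutes the ``simpler conjecture'' to which the Relative Manin--Mumford Conjecture is to be reduced.
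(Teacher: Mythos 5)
The statement you were asked to prove is a conjecture in the paper: the paper never proves the Relative Manin--Mumford Conjecture, and its only unconditional content here is the reduction (Proposition~\ref{RelativeMMEquivIntro}, proven as Proposition~\ref{RelativeMMEquiv}) that Conjecture~\ref{ConjRelativeMMEasyIntro} implies it. Your proposal inverts that division of labor. In your second paragraph you treat the implication ``$(X\cap\cA_{\mathrm{tor}})^{\Zar}=X \Rightarrow X=X^{\mathrm{deg}}(1)$'' as a routine output of Pila--Wilkie counting plus a Galois-orbit lower bound ``uniform in the family''; but that implication is verbatim Conjecture~\ref{ConjRelativeMMEasyIntro}, i.e.\ precisely the statement the paper isolates as open. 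The uniform polynomial lower bound for Galois orbits of fiberwise torsion points over a base of arbitrary dimension is not available in the needed generality, and the passage from the weakly optimal subvarieties produced by counting plus Ax--Schanuel to the specific locus $X^{\mathrm{deg}}(1)$ is asserted, not carried out. Your appeal to Theorem~\ref{ThmCriterionDegIntro} is also off target: the Betti-rank statements control $X^{\mathrm{deg}}(t)$ only for $t\le 0$, and the paper is explicit that the $t=1$ case relevant to torsion cannot be recast as a Betti-rank computation. So the step you present as ``standard'' is the genuinely open one, which you then partially re-concede in your final paragraph.

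Conversely, the part the paper does prove unconditionally --- deducing $\codim_{\cA_X}(X)\le\dim S$ from $X=X^{\mathrm{deg}}(1)$ (applied to $\varphi(X)$, over $\bar{\Q}$ after a specialization argument) --- is exactly where your sketch has a real gap. Definition~\ref{DefnDegeneracyLocus} only covers $X$ by varying subvarieties $Y$, each with its own $\langle Y\rangle_{\mathrm{sg}}$; to descend you need a single abelian subscheme $\cB$ with the quantitative bound $\dim(\iota_{/\cB}\circ p_{\cB})(\varphi(X))<\dim\varphi(X)-g_{\cB}+1$ and with $\iota_{/\cB}\circ p_{\cB}$ not generically finite, which is Theorem~\ref{ThmCriterionXDegenerateGeom} for $t=1$ (available because $\Z X$ dense forces $\cA_X=\cA$), together with the Zariski closedness of Theorem~\ref{ThmDegeneracyLocusZarClosed}. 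The induction then splits into two branches you conflate: if $\iota_{/\cB,G}$ is not generically finite one inducts on the base dimension; if it is generically finite one shows $g_{\cB}>0$ and $\varphi(X)+\cB=\varphi(X)$, passes to the quotient by $\cB$ (where density of torsion descends) and inducts on $g$, with the codimension bookkeeping done inside $\mathfrak{A}_{g-g_{\cB}}$ rather than inside $\cA$, and with the inductive hypothesis being Conjecture~\ref{ConjRelativeMMEasyIntro} for all smaller $(g',\dim X',\dim\pi'(X'))$. Your ``induction on $(g,\dim\cA)$ should close the argument'' does none of this. As written, the proposal therefore neither proves the conjecture (the arithmetic ingredient remains open) nor establishes the reduction that constitutes the paper's actual theorem.
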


In this paper we will reduce this conjecture to another simpler conjecture.
\begin{conj}\label{ConjRelativeMMEasyIntro}
Assume $S$ is a locally closed irreducible subvariety of $\A_g$ defined over $\bar{\Q}$ and $\cA = \mathfrak{A}_g \times_{\mathbb{A}_g} S$. Assume $X$ is defined over $\bar{\Q}$. If $(X\cap \cA_{\mathrm{tor}})^{\Zar} = X$, then $X^{\mathrm{deg}}(1) = X$.
\end{conj}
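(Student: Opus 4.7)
I would attack this by the Pila--Zannier strategy in the mixed Shimura variety $\mathfrak{A}_g$. Let $u \colon \tilde{\cA} \rightarrow \cA^{\mathrm{an}}$ denote the universal cover, and fix a fundamental set $\mathcal{F}$ for $u$ that is definable in $\R_{\mathrm{an},\exp}$ (via the period uniformization of $\mathfrak{A}_g$, as set up in \cite{GaoAxSchanuel}). Relative to the integral Betti lattice, a torsion point $x \in \cA_{\mathrm{tor}}$ of order $N$ lifts through $\mathcal{F}$ to a $\Q$-point of denominator at most $N$ and height polynomial in $\log N$. By Masser-type lower bounds for Galois orbits of torsion points in abelian schemes over $\bar{\Q}$, spread out over $S$, each torsion $x \in X \cap \cA_{\mathrm{tor}}$ of order $N$ has at least $cN^{\delta}$ Galois conjugates in $X \cap \cA_{\mathrm{tor}}$ for some $c,\delta>0$ uniform in $\pi_S(x) \in S(\bar{\Q})$. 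Their lifts to $\mathcal{F} \cap u^{-1}(X^{\mathrm{an}})$ provide polynomially many $\Q$-points of polynomially bounded height; the Pila--Wilkie theorem then forces $\mathcal{F} \cap u^{-1}(X^{\mathrm{an}})$ to contain, for $N$ sufficiently large, a positive-dimensional complex-algebraic subvariety $B$ passing through the lift of some Galois conjugate $x'$ of $x$.

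Applying the mixed Ax--Schanuel theorem of \cite{GaoAxSchanuel}, the $\cA$-Zariski closure of $u(B)$ is contained in a generically special subvariety of sg type in the sense of Definition~\ref{DefnSubvarOfsgType}. Taking $Y \subseteq X$ to be the irreducible component of this Zariski closure through $x'$, one obtains a positive-dimensional $Y \subseteq X$ satisfying
\[
\dim \langle Y \rangle_{\mathrm{sg}} - \dim \pi_S(Y) \le \dim Y,
\]
so $x' \in X^{\mathrm{deg}}(1)$. By Theorem~\ref{ThmZarClosedXdegIntro}, $X^{\mathrm{deg}}(1)$ is Zariski closed in $X$, and its defining condition is intrinsic, hence it is $\mathrm{Aut}(\C/K)$-invariant for a number field $K$ of definition of $X, S, \cA$. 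Consequently the finite Galois orbit $\bigcup_{\sigma \in \mathrm{Gal}(\bar{\Q}/K)} \sigma(X^{\mathrm{deg}}(1))$ is again Zariski closed in $X$ and contains all torsion points of $X$ of sufficiently large order; the hypothesis $(X\cap \cA_{\mathrm{tor}})^{\Zar} = X$ together with the irreducibility of $X$ then forces some Galois conjugate of $X^{\mathrm{deg}}(1)$ to equal $X$, and thus $X^{\mathrm{deg}}(1) = X$.

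The crux is the uniform Galois-orbit lower bound invoked above: $c$ and $\delta$ must be uniform as the base point $\pi_S(x)$ ranges over $S(\bar{\Q})$, not merely uniform in $N$ over a fixed abelian variety. Such uniformity is available in small relative dimension by work of Masser, David and Gaudron--R\'emond, but a fully uniform lower bound in arbitrary dimension is essentially of the same depth as the torsion-height inequalities that drive related unlikely-intersections results, which is why the statement is left as a conjecture here. A secondary subtlety is that the Pila--Wilkie block $B$ could lie inside a single fiber of $\pi_S$, in which case one reduces to Raynaud's absolute Manin--Mumford theorem in that fiber; the genuinely new transversal case is the one where mixed Ax--Schanuel is indispensable, together with the finiteness statement \cite[Theorem~1.4]{GaoAxSchanuel} to bound the sg-type subvarieties produced.
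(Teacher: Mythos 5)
There is no proof of this statement in the paper for your proposal to be compared with: it is precisely Conjecture~\ref{ConjRelativeMMEasyIntro}, stated as an open conjecture, and the paper's only result about it is Proposition~\ref{RelativeMMEquivIntro}/\ref{RelativeMMEquiv}, namely that it \emph{implies} the relative Manin--Mumford conjecture. Your text is an outline of the expected Pila--Zannier attack (which is exactly the strategy the author sketches in the introduction as the intended context for the conjecture), and it is not a proof. The decisive gap is one you concede yourself: the Masser/David/Gaudron--R\'emond type lower bound for Galois orbits of torsion points must be uniform as $\pi_S(x)$ ranges over $S(\bar{\Q})$, where the fibers are abelian varieties of arbitrary dimension $g$ and the base points have unbounded height and degree; no such uniform bound is known in general, and with that ingredient missing the whole argument does not get off the ground. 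Since the conclusion of your first two steps is essentially the statement of the conjecture itself, invoking ``Masser-type lower bounds \ldots uniform in $\pi_S(x)$'' is assuming the hard part.

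There are also gaps in the steps you treat as routine. A torsion point of order $N$ lifts to a point whose \emph{Betti} coordinates are rational with denominator $N$, but its $\mathfrak{H}_g^+$-coordinates are transcendental; so Pila--Wilkie cannot be applied to $\mathcal{F}\cap u^{-1}(X^{\mathrm{an}})$ as you state, but only to a suitable definable set built from the Betti map (a fibered/family version), and the ``blocks'' it produces are real semi-algebraic in the Betti directions, not complex-algebraic subvarieties $B$ of $\cX_{2g,\mathrm{a}}^+$. Consequently the Ax--Schanuel step is only asserted: applied to a genuinely complex-algebraic $B\subseteq u^{-1}(X)$, Theorem~\ref{ThmWAS} gives no atypicality whatsoever, and to extract the inequality $\dim \langle Y \rangle_{\mathrm{sg}} - \dim \pi_S(Y) < \dim Y + 1$ one has to exploit the rational-Betti-coordinate structure of the locus produced by counting, in the spirit of the fibered argument of Proposition~\ref{PropDegLocus} (using both Theorem~\ref{ThmWAS} and Theorem~\ref{ThmWASPure} and the fact that Betti fibers are complex analytic); none of this is carried out. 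The final step (Galois stability of $X^{\mathrm{deg}}(1)$ plus Theorem~\ref{ThmZarClosedXdegIntro} and density of large-order torsion) is plausible, but it cannot rescue the missing arithmetic input. As it stands, the statement remains a conjecture both in the paper and after your proposal.
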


\begin{prop}\label{RelativeMMEquivIntro}
Conjecture~\ref{ConjRelativeMMEasyIntro} implies the relative Manin-Mumford conjecture.
\end{prop}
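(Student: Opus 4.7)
The plan is to descend the relative Manin-Mumford conjecture into the setting of Conjecture~\ref{ConjRelativeMMEasyIntro} via standard reductions, apply that conjecture, and translate the resulting degeneracy conclusion into the desired codimension bound.

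\textbf{Reductions.} Replacing $\cA$ by the isogenous principally polarizable $\cA^{\dagger}$ from diagram~\eqref{EqModularMapIntro}, we may assume $\cA$ is principally polarized; isogenies preserve torsion, Zariski density of $\Z X$, and ambient codimension. Using the modular map $\varphi$, we then replace $(\cA, S, X)$ by $(\bar\cA, \bar S, \bar X) := (\mathfrak{A}_g|_{\varphi_S(S)}, \varphi_S(S), \varphi(X))$. Since $\varphi$ is surjective on fibers, it sends torsion to torsion and $\overline{\Z \bar X} = \bar \cA$; moreover $\dim S \geq \dim \bar S$ and $\dim X \geq \dim \bar X$, so $\codim_{\bar\cA}\bar X \leq \dim \bar S$ implies $\codim_\cA X \leq \dim S$. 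Finally, a standard spreading-out argument over a $\Z$-model reduces to the case where $(\bar\cA, \bar S, \bar X)$ is defined over $\bar\Q$.

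\textbf{Applying the simpler conjecture.} Conjecture~\ref{ConjRelativeMMEasyIntro} applied to $(\bar \cA, \bar S, \bar X)$ yields $\bar X^{\mathrm{deg}}(1) = \bar X$. By Theorem~\ref{ThmZarClosedXdegIntro} this locus is Zariski closed in $\bar X$, and one expects from the structural description underlying that theorem that the irreducible components of $\bar X^{\mathrm{deg}}(1)$ themselves satisfy the defining inequality. Since $\bar X$ is irreducible and equals $\bar X^{\mathrm{deg}}(1)$, we extract that $\bar X$ itself satisfies
\[
\dim \langle \bar X \rangle_{\mathrm{sg}} \leq \dim \bar X + \dim \bar S.
\]

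\textbf{Extracting the codimension bound.} Write $\langle \bar X \rangle_{\mathrm{sg}} = \rho(\sigma' + \sigma_0' + \cB')$ with $\sigma'$ torsion, $\sigma_0'$ a constant section valued in the isotrivial part $C'$, and $\cB'$ an abelian subscheme. The Zariski closure of $\cup_{N \in \Z}[N]\langle \bar X \rangle_{\mathrm{sg}}$ is a finite union of torsion translates of $\rho((A_0 \times S') + \cB')$, where $A_0 \subseteq C'$ is the smallest abelian subvariety containing $\sigma_0'$. Zariski density of $\Z \bar X$ in $\bar\cA$ forces $(A_0 \times S') + \cB' = \cA'$. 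A fiberwise analysis of torsion points of $\rho(\sigma' + \sigma_0' + \cB')$, combined with the torsion-density hypothesis $(\bar X \cap \bar\cA_{\mathrm{tor}})^{\Zar} = \bar X$, forces $\sigma_0'$ to lie in $\cB'$ modulo a torsion section; absorbing this correction by adjusting $\sigma'$ and $\cB'$, we may assume $\sigma_0' = 0$. Then $A_0 = 0$ and so $\cB' = \cA'$, giving $\dim \langle \bar X \rangle_{\mathrm{sg}} = \dim \bar\cA = \dim \bar S + g$. Combining with the previous inequality yields $\dim \bar X \geq g$, equivalently $\codim_{\bar\cA} \bar X \leq \dim \bar S$. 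Descending the reductions gives the relative Manin-Mumford conjecture.

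\textbf{Main obstacle.} The delicate part is the last paragraph: justifying that torsion-density eliminates the constant-section contribution $\sigma_0'$, and handling the inherent ambiguity in the decomposition of a generically special subvariety of sg type. The minimality of $\langle \bar X \rangle_{\mathrm{sg}}$ is what allows this normalization, and care is needed because the constant part $C'$ of $\cA'/S'$ can be nontrivial even in the modular setting.
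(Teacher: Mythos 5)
Your argument breaks at the step where you pass from $\bar X^{\mathrm{deg}}(1)=\bar X$ to the inequality $\dim\langle \bar X\rangle_{\mathrm{sg}}-\dim \bar S<\dim \bar X+1$ for $\bar X$ itself. The $1$-st degeneracy locus is a union of subvarieties $Y\subseteq \bar X$ satisfying $\dim\langle Y\rangle_{\mathrm{sg}}-\dim\pi(Y)<\dim Y+1$, and $\bar X$ can be covered by such $Y$ without satisfying the inequality itself; there is no ``structural description'' forcing the components of $X^{\mathrm{deg}}(1)$ to be degenerate as wholes. Concretely, take $B=S_1\times S_2\subseteq \A_2\times\A_3$ with $S_1=\A_2$, $S_2$ a curve, and $X=\xi_1(S_1)\times\xi_2(S_2)$ with each $\Z\xi_i$ dense in $\mathfrak{A}_{g_i}|_{S_i}$: the slices $\xi_1(S_1)\times\{\xi_2(s_2)\}$ have sg-defect $2<3$, so $X=X^{\mathrm{deg}}(0)=X^{\mathrm{deg}}(1)$, yet $\dim\langle X\rangle_{\mathrm{sg}}-\dim B=5\not<\dim X+1=5$. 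This is exactly why the paper does not argue through $\langle X\rangle_{\mathrm{sg}}$: the correct translation of $X=X^{\mathrm{deg}}(1)$ is Theorem~\ref{ThmCriterionXDegenerateGeom}, which only produces an abelian subscheme $\cB$ with $\dim(\iota_{/\cB}\circ p_{\cB})(X)<\dim X-g_{\cB}+1$ after composing with a quotient and a modular map $\iota_{/\cB}$ that may contract part of the base; your stronger reformulation is valid only under the restrictive hypotheses of Proposition~\ref{PropDegSimpleBase} (base of dimension one or simple monodromy, plus extra conditions when $t=1$), which you cannot assume.

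Because the criterion is this weak, the paper's proof is necessarily an induction on $(\dim S,g)$ with classical Manin--Mumford (Raynaud) as the base case: either $\iota_{/\cB,G}$ drops the base dimension and one applies the induction hypothesis on $\dim S$ to $(\iota_{/\cB}\circ p_{\cB})(\varphi(X))$, or $\iota_{/\cB,G}$ is generically finite, in which case $g_{\cB}>0$, $\varphi(X)+\cB=\varphi(X)$, and one quotients by $\cB$ and applies the induction hypothesis on $g$ (torsion density being preserved under the quotient). Your one-shot argument has no substitute for this descent mechanism. A secondary, lesser issue: even granting your inequality, the elimination of the constant section $\sigma_0'$ from torsion density is not immediate, since the torsion orders of the points witnessing density vary with the fiber, so the set of $s$ where $\sigma_0'(s)$ is torsion modulo $\cB'_s$ is only a countable union of closed subsets containing a dense set; this needs an actual argument, not just minimality of $\langle \bar X\rangle_{\mathrm{sg}}$.
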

A more precise version of this reduction is Proposition~\ref{RelativeMMEquiv}. 

Proposition~\ref{RelativeMMEquivIntro} suggests that there is a strong link between the Betti map and the relative Manin-Mumford conjecture. The existence of such a link already appeared in previous works on relative Manin-Mumford: the \textit{Betti coordinate} played a key role in the proofs of many particular cases of the conjecture by Masser-Zannier \cite{MasserZannierTorsionPointOnSqEC, MASSER2014116, MasserZannierRelMMSimpleSur} and Corvaja-Masser-Zannier \cite{CorvajaMasserZannier2018} (pencils of abelian surfaces, first over $\bar{\Q}$ then over $\C$; passing from $\overline{\Q}$ to $\C$ is highly non-trivial as it enlarges the base), Bertrand-Masser-Pillay-Zannier \cite{BMPZRelativeMM} (semi-abelian surfaces), and Masser-Zannier \cite{MasserZannierRMMoverCurve} (any abelian scheme over a curve). See also \cite{ZannierRMM}.

\subsection{Outline of the paper}
In $\mathsection$\ref{SectionNotation} we set up some convention of the paper. In $\mathsection$\ref{SectionUnivAb} we recall the universal abelian variety and define the Betti map for this case. In $\mathsection$\ref{SectionBettiRevisited} we define the Betti map for a general abelian scheme. These are the basic setting up of the paper.

In $\mathsection$\ref{SectionBiAlgSystem} we explain in details our main tools to study the Betti map. There are two parts. The first part $\mathsection$\ref{SubsectionGenSpSGandBiAlg}-\ref{SubsectionWAS} is to introduce the functional transcendence theorem (\textit{mixed Ax-Schanuel}), and the second part $\mathsection$\ref{SubsectionMSV}-\ref{SubsectionGeomQuotByNormalSubgp} is Deligne-Pink's language of mixed Shimura varieties.

$\mathsection$\ref{SectionSubvarBettiNonMaxRank}--\ref{SectionCriterionDegeneracy} are the core of this paper. In these sections we prove the main results on the Betti rank (Theorem~\ref{ThmDegLocusIntro}, Theorem~\ref{ThmZarClosedXdegIntro} and Theorem~\ref{ThmCriterionDegIntro}) for the case $X \subseteq \mathfrak{A}_g$. In $\mathsection$\ref{SectionSubvarBettiNonMaxRank} we use \textit{weak mixed Ax-Schanuel} to transfer the problem of the generic rank of the Betti map into studying the $t$-th degeneracy locus for some particular $t$'s, and hence prove Theorem~\ref{ThmDegLocusIntro} for $X \subseteq \mathfrak{A}_g$. In $\mathsection$\ref{SectionDegeneracyLocusClosed} we use the \textit{finiteness result \cite[Theorem~1.4]{GaoAxSchanuel}} to  prove the Zariski closedness of the $t$-th degeneracy locus (Theorem~\ref{ThmZarClosedXdegIntro}) for $X \subseteq \mathfrak{A}_g$. The proof in this section will also be used in $\mathsection$\ref{SectionCriterionDegeneracy}, where the criterion to check whether $X$ is degenerate is proved for $X \subseteq \mathfrak{A}_g$. Combined with results in $\mathsection$\ref{SectionSubvarBettiNonMaxRank}-\ref{SectionDegeneracyLocusClosed} this proves Theorem~\ref{ThmCriterionDegIntro} for $X \subseteq \mathfrak{A}_g$.

Then ultimate versions of the main results on the Betti rank (Theorem~\ref{ThmDegLocusIntro}, Theorem~\ref{ThmZarClosedXdegIntro} and Theorem~\ref{ThmCriterionDegIntro}) are proved in $\mathsection$\ref{SectionAppConjACZ}. As we shall see they are not hard to be deduced from the case $X \subseteq \mathfrak{A}_g$. The end of this section sees its application to the ACZ question and proves Theorem~\ref{ThmACZIntro}.

Then in $\mathsection$\ref{SectionAppFaltingsZhang} we prove Theorem~\ref{ThmFaltingsZhangBettiMapIntro}, claiming that the Betti map attains the maximal rank if we raise $X$ to a large enough fibered power, for $X$ satisfying some mild properties.%, which is a more important application of Theorem~\ref{ThmCriterionDegIntro}.

In $\mathsection$\ref{SectionRelativeMM} we reduce the relative Manin-Mumford conjecture to a simpler conjecture involving the $1$-st degeneracy locus.

In Appendix~\ref{SectionAppendixParticularBase} we further simplify the formula of the Betti rank when the base takes some simple form.

\subsection*{Acknowledgements}
The author would like to thank Umberto Zannier for relavant discussions, especially on the historical notes on the Betti map. The author would like to thank Fabrizio Berroero, Philipp Habegger, and Umberto Zannier on relavant discussions on relative Manin-Mumford. %The author would like to thank Ngaiming Mok for relevant discussions on $\mathsection$\ref{SubsectionBettiMapGeomImportance}. 
The author would like to thank Yves Andr\'{e}, Daniel Bertrand, Bruno Klingler, Emmanuel Ullmo, Xinyi Yuan and Shouwu Zhang for their comments and suggestions to improve the manuscript. 
The author would like to thank Tangli Ge for pointing out a gap in $\mathsection$\ref{SectionAppFaltingsZhang} in a previous version. 
The author would also like to thank the referees for their comments and suggestions. The author would like to thank the Institute for Advanced Studies (NJ, USA) and the Morningside Center of Mathematics (Beijing, China) for their hospitality during the preparation of this work.

%% Section 2
\section{Convention and Notation}\label{SectionNotation}
\subsection{}
Let $g \ge 1$ be an integer. 
Let $S$ be an irreducible variety over $\C$ and let $\pi_S \colon \cA \rightarrow S$ be an abelian scheme of relative dimension $g$. Since level structures are not important in this paper, we will use the following abuse of notation through the whole paper.
\begin{enumerate}
\item[(i)] We say that a subvariety $\cB$ of $\cA$ is an \textbf{abelian subscheme} of $\cA \rightarrow S$ if there exists a finite covering $S' \rightarrow S$, inducing a morphism $\rho \colon \cA' = \cA \times_S S' \rightarrow \cA$, such that $\cB = \rho(\cB')$ where $\cB'$ is an abelian subscheme of $\cA'/S'$ in the usual sense.\footnote{Namely $\cB'$ is an irreducible subgroup scheme of $\cA' \rightarrow S'$ which is proper, flat and dominant to $S'$. In particular each fiber of $\cB' \rightarrow S'$ is an abelian subvariety of the corresponding fiber of $\cA' \rightarrow S'$.}
\item[(ii)] We say that $\sigma$ is a \textbf{section} of $\cA \rightarrow S$ if there exists a finite covering $S' \rightarrow S$, inducing a morphism $\rho \colon \cA' = \cA \times_S S' \rightarrow \cA$, such that $\sigma = \rho \circ \sigma'$ where $\sigma' \colon S' \rightarrow \cA'$ is a section of $\cA' \rightarrow S'$ in the usual sense.\footnote{In other words $\sigma$ is a multi-section in the usual sense.} Denote by $\sigma(S) := (\rho\circ\sigma')(S')$.
\item[(iii)] In (ii), call $\sigma$ a \textbf{torsion section} if $\sigma'(s')$ is a torsion point on $\cA'_{s'}$ for each $s' \in S'(\C)$; call $\sigma$ a \textbf{constant section} if $\sigma'$ is the composite of $S' \rightarrow C' \times S'$, $s' \mapsto (c',s')$, and the inclusion $C' \times S' \subseteq \cA'$, where $C' \times S'$ is a constant abelian subscheme of $\cA' \rightarrow S'$.
%\item We say that a subvariety $Y$ of $\cA$ is a \textbf{torsion translate} of $\cA \rightarrow S$ if $Y$ is the translate of an abelian subscheme of $\cA \rightarrow S$ by a torsion section.
\end{enumerate}

The following definition is convenient to study constant sections.
\begin{defn}
An abelian scheme $\cC \rightarrow S$ (of relative dimension $g$) is said to be \textbf{isotrivial} if one of the following equivalent conditions holds:
\begin{enumerate}
\item[(i)] The fibers $\cC_s$ are isomorphic to each other for all $s \in S(\C)$, 
\item[(ii)] There exists a finite covering $S' \rightarrow S$ such that $\cC \times_S S'$ is a constant abelian scheme, namely $\cC \times_S S' = C \times S'$ for some abelian variety $C$ over $\C$.
\item[(iii)] The image of the modular map $S \rightarrow \A_g$ induced by $\cC \rightarrow S$ is a point.
\end{enumerate}
\end{defn}
In particular the zero section of any abelian scheme $\cA \rightarrow S$ is an isotrivial abelian subscheme. If $\dim S = 0$, then $\cA \rightarrow S$ is isotrivial.

Since the sum of two isotrivial abelian subschemes of $\cA \rightarrow S$ is isotrivial, we can define the largest isotrivial abelian subscheme of $\cA \rightarrow S$ which we denote by $\cC$. Then any constant section of $\cA \rightarrow S$ (defined above) has image in $\cC$.

\subsection{Modular map}
When $g = 0$, let $\mathfrak{A}_0$ and $\A_0$ be a point. When $g \ge 1$. Let $D = \mathrm{diag}(d_1,\ldots,d_g)$ be a $g \times g$ diagonal matrix with $d_1, \ldots, d_g$ positive integers such that $d_1|\cdots |d_g$.

For any integer $N \ge 3$, let $\mathbb{A}_{g,D}(N)$ denote the moduli space of abelian varieties of dimension $g$ with polarization of type $D$ and with level-$N$-structure. Then $\mathbb{A}_{g,D}(N)$ is a fine moduli space, and hence admits a universal family $\pi \colon \mathfrak{A}_{g,D}(N) \rightarrow \mathbb{A}_{g,D}(N)$. Since level structure is not important for our purpose, we shall drop the ``$(N)$'' in the rest of the paper. When there is no ambiguity about the polarization type, we also drop the ``$D$'' and simply use the notation $\pi \colon \mathfrak{A}_g \rightarrow \mathbb{A}_g$.

It is known that any abelian scheme $\pi_S \colon \cA \rightarrow S$ can be equipped with a polarization of type $D$ for some $D = \mathrm{diag}(d_1,\ldots,d_g)$, with $d_1, \ldots, d_g$ positive integers such that $d_1|\cdots |d_g$; see \cite[$\mathsection$2.1]{GenestierNgo}. Thus up to taking a finite covering of $S$ and the associated base change of $\cA \rightarrow S$, we have the modular map
%The abelian scheme $\pi_S \colon \cA \rightarrow S$ induces a \textit{modular map} $\iota\colon \cA \rightarrow \mathfrak{A}_g$ as follows. There exists a principally polarizable abelian scheme $\cA^{\dagger}/S$ equipped with an isogeny $\cA \rightarrow \cA^{\dagger}$. Then up to taking a finite covering of $S$ and the associated base change of $\cA \rightarrow S$, we have the following diagram
\begin{equation}\label{EqModularMapIntro}
\xymatrix{
%\cA \ar[r] \ar[rd]_{\pi_S} \ar@/^1pc/[rr]^{\iota} & 
\cA \ar[r]^{\iota} \ar[d] \pullbackcorner & \mathfrak{A}_g \ar[d]^{\pi} \\
 S \ar[r]^-{\iota_S} & \mathbb{A}_g.
}
\end{equation}
Then $\cA \rightarrow S$ is isotrivial if and only if $\iota_S(S)$ is a point. In particular if $S \subseteq \A_g$, then $\mathfrak{A}_g \times_{\A_g}S$ is isotrivial if and only if $S$ is a point.

%We will use the following fact several times in the paper: Assume $S \subseteq \A_g$, then $\mathfrak{A}_g|_S$ is isotrivial if and only if $S$ is a point. The ``if'' part is clearly true. The ``only if'' part is proven as follows: the homomorphism of fundamental groups $\pi_1(S) \rightarrow \pi_1(\A_g) < \Sp_{2g}(\Z)$ has finite image by Deligne's Theorem of the Fixed Part \cite[Corollaire~4.1.2]{DeligneHodgeII}, so $S$ is a point because it is an irreducible subvariety of $\A_g$.

%% Section 3
\section{Universal abelian variety and Betti map}\label{SectionUnivAb}

We recall some facts on the universal abelian variety in this section. Let $D = \mathrm{diag}(d_1,\ldots,d_g)$ be a $g \times g$ diagonal matrix with $d_1,\ldots,d_g$ positive integers such that $d_1|\cdots |d_g$.

\subsection{Uniformizing space of $\mathbb{A}_g$}\label{SubsectionUnifSpaceOfModuliSpace}
Let $\mathfrak{H}_g^+$ be the Siegel upper half space
\[
\{ Z = X + \sqrt{-1}Y \in M_{g \times g}(\mathbb{C}) : Z = Z^{\!^{\intercal}},~ Y > 0\}.
\]
It is well-known that the uniformization of $\mathbb{A}_g := \mathbb{A}_{g,D}$ in the category of complex varieties is given by
\begin{equation}\label{EqUniformizationModuliSpace}
\mathbf{u}_G \colon \mathfrak{H}_g^+ \rightarrow \mathbb{A}_g.
\end{equation}
Let us take a closer look at this uniformization.

%Let $V_{2g}$ be a $\mathbb{Q}$-vector space of dimension $2g$, and let 
%\[
%\Psi \colon V_{2g} \times V_{2g} \rightarrow \mathrm{G}_{a,\mathbb{Q}}, \quad (v_1,v_2) \mapsto v_1^{\!^{\intercal}} \begin{pmatrix} 0 & I_g \\ -I_g & 0 \end{pmatrix} v_2
%\]
%be a non-degenerate alternating form. Define a $\mathbb{Q}$-group $\mathrm{GSp}_{2g}$ by
%\[
%\mathrm{GSp}_{2g} =\{h \in \mathrm{G}L(V_{2g}) : \Psi(hv,hv') = \nu(h)\Psi(v,v')\text{ for some }\nu(h) \in \mathrm{G}_m\}.
%\]
Let $\mathrm{Sp}_{2g}$ be the $\mathbb{Q}$-group
\[
\left\{ h \in \mathrm{GL}_{2g} : h \begin{pmatrix} 0 & D \\ -D & 0 \end{pmatrix} h^{\!^{\intercal}} = \begin{pmatrix} 0 & D \\ -D & 0 \end{pmatrix} \right\},
\]
and let $\mathrm{GSp}_{2g}$ be the image of $\mathbb{G}_{\mathrm{m}} \times \mathrm{Sp}_{2g}$ under the central isogeny $\mathbb{G}_{\mathrm{m}} \times \mathrm{SL}_{2g} \rightarrow \mathrm{GL}_{2g}$. % where $\mathbb{G}_{\mathrm{m}}$ is the subgroup of $\mathrm{GL}_{2g}$ consisting of invertible scalar matrices (the product is taken in $\mathrm{GL}_{2g}$). 
Then $\mathrm{GSp}_{2g}(\mathbb{R})^+$, the connected component of $\mathrm{GSp}_{2g}(\mathbb{R})$ containing the identity, acts on $\mathfrak{H}_g^+$ by the formula
\[
\begin{pmatrix} A' & B' \\ C' & D' \end{pmatrix} Z = (A'Z+B')(C'Z+D')^{-1}, \quad \forall \begin{pmatrix} A' & B' \\ C' & D' \end{pmatrix} \in \mathrm{GSp}_{2g}(\mathbb{R})^+\text{ and }Z \in \mathfrak{H}_g^+.
\]
It is known that the action of $\mathrm{GSp}_{2g}^{\mathrm{der}}(\mathbb{R}) = \mathrm{Sp}_{2g}(\mathbb{R})$ on $\mathfrak{H}_g^+$ thus defined is transitive, and the uniformization \eqref{EqUniformizationModuliSpace} is obtained by identifying $(\mathbb{A}_g)^{\mathrm{an}}$ with the quotient space $\Gamma_{\mathrm{GSp}_{2g}}\backslash \mathfrak{H}_g^+$ for a suitable congruence group $\Gamma_{\mathrm{Sp}_{2g}}$ of $\mathrm{Sp}_{2g}(\mathbb{Z})$.

%The natural inclusion $\mathfrak{H}_g^+ \subseteq \{ Z = X + \sqrt{-1}Y \in M_{g \times g}(\mathbb{C}) : Z = Z^{\!^{\intercal}}\} \cong \mathbb{C}^{g(g+1)/2}$ realizes $\mathfrak{H}_g^+$ as an open (in the usual topology) semi-algebraic subset of $\mathbb{C}^{g(g+1)/2}$. Hence this inclusion endows $\mathfrak{H}_g^+$ with a complex structure. 

%Let $\Gamma_{\mathrm{GSp}_{2g}}(N)= \{ h \in \mathrm{Sp}_{2g}(\mathbb{Z}): h \equiv I_{2g} \pmod{N} \}$. Then $\mathbb{A}_g(N) \cong \Gamma_{\mathrm{GSp}_{2g}}(N) \backslash \mathfrak{H}_g^+$ as complex varieties.

\subsection{Uniformizing space of $\mathfrak{A}_g$}\label{SubsectionUnivAbVar}
To obtain the uniformization of $\mathfrak{A}_g$, let us construct the following %Recall that $V_{2g}$ is a $\mathbb{Q}$-vector space of dimension $2g$. 
 complex space $\cX_{2g,\mathrm{a}}^+$.% as follows.
\begin{enumerate}
\item[(i)] As a semi-algebraic space, $\cX_{2g,\mathrm{a}}^+ = \mathbb{R}^{2g} \times \mathfrak{H}_g^+$.
\item[(ii)] %Consider the natural representation of $\mathrm{GSp}_{2g}(\mathbb{R})$ on $\mathbb{R}^{2g}$. It induces an action $\mathrm{Sp}_{2g}(\mathbb{R})$ on $\mathbb{R}^{2g}$, and let us fix a Lagrangian decomposition $\mathbb{R}^{2g} \cong \mathbb{R}^g \times \mathbb{R}^g$. 
The complex structure of $\cX_{2g,\mathrm{a}}^+$ is the one given by% the pullback of the following map
\begin{equation}\label{EqComplexStrOfX2g}
\begin{array}{cccc}
\cX_{2g,\mathrm{a}}^+ = & \mathbb{R}^g \times \mathbb{R}^g \times \mathfrak{H}_g^+ & \xrightarrow{\sim} & \mathbb{C}^g \times \mathfrak{H}_g^+, \\
& (a,b,Z) & \mapsto & (Da+Zb, Z)
\end{array}.
\end{equation}
\end{enumerate}

The uniformization of $\mathfrak{A}_g$ in the category of complex varieties is then given by
\begin{equation}\label{EqUnifUniversalAbVar}
\mathbf{u} \colon \cX_{2g,\mathrm{a}}^+ \rightarrow \mathfrak{A}_g.
\end{equation}
%The projection $\pi \colon \mathfrak{A}_g \rightarrow \mathbb{A}_g$ is compatible with the natural projection $\widetilde{\pi}\colon \cX_{2g,\mathrm{a}}^+ \rightarrow \mathfrak{H}_g^+$.

Similar to the discussion on $\mathbf{u}_G$, there exists a $\mathbb{Q}$-group which we call $P_{2g,\mathrm{a}}$ such that $P_{2g,\mathrm{a}}^{\mathrm{der}}(\mathbb{R})$ acts transitively on $\cX_{2g,\mathrm{a}}^+$ and $\mathbf{u}$ is obtained by identifying $(\mathfrak{A}_g)^{\mathrm{an}}$ with the quotient space $\Gamma\backslash \cX_{2g,\mathrm{a}}^+$ for a suitable congruence subgroup $\Gamma = \mathbb{Z}^{2g} \rtimes \Gamma_{\mathrm{Sp}_{2g}}$ of $P_{2g,\mathrm{a}}^{\mathrm{der}}(\mathbb{Z})$. Let us briefly explain this.

Use $V_{2g}$ to denote the $\mathbb{Q}$-vector group of dimension $2g$. Then the natural action of $\mathrm{GSp}_{2g}$ on $V_{2g}$ defines a $\mathbb{Q}$-group
\[
P_{2g,\mathrm{a}} = V_{2g} \rtimes \mathrm{GSp}_{2g}.
\]
The action of $P_{2g,\mathrm{a}}(\mathbb{R})^+$ on $\cX_{2g,\mathrm{a}}^+$ is defined as follows: for any $(v,h) \in P_{2g,\mathrm{a}}(\mathbb{R})^+ = V_{2g}(\mathbb{R}) \rtimes \mathrm{GSp}_{2g}(\mathbb{R})^+$ and any $(v',x) \in \cX_{2g,\mathrm{a}}^+$, we have
\begin{equation}\label{EqActionOfP2gOnX2g}
(v,h) \cdot (v',x) = (v+hv',hx)
\end{equation}
where $\mathrm{GSp}_{2g}(\mathbb{R})^+$ acts on $\mathbb{R}^{2g}$ as above \eqref{EqComplexStrOfX2g}.

The natural projection of complex spaces $\widetilde{\pi} \colon \cX_{2g,\mathrm{a}}^+ \rightarrow \mathfrak{H}_g^+$ is equivariant with respect to the natural projection of groups $P_{2g,\mathrm{a}} \rightarrow \mathrm{GSp}_{2g}$. Hence by abuse of notation we also denote by $\widetilde{\pi} \colon P_{2g,\mathrm{a}} \rightarrow \mathrm{GSp}_{2g}$.
%and $\widetilde{\pi}$ is compatible with the natural projection of groups $P_{2g,\mathrm{a}} \rightarrow \mathrm{GSp}_{2g}$, which we also denote by $\widetilde{\pi}$ by abuse of notation.

\subsection{Betti map}\label{SubsectionBettiMapUnivAb}
We define the Betti map in this section. 
We will start by defining the \textit{universal uniformized Betti map} on $\cX_{2g,\mathrm{a}}^+$, and then descend it to $\cA_{\mathfrak{H}_g^+}$, the pullback of $\mathfrak{A}_g / \mathbb{A}_g$ under $\mathbf{u}_G \colon \mathfrak{H}_g^+ \rightarrow \mathbb{A}_g$. Note that $\cA_{\mathfrak{H}_g^+}$ is a family of abelian varieties over $\mathfrak{H}_g^+$. %Then we make the definition for an arbitrary abelian scheme $\cA / S$ by mapping it into the universal abelian variety.

Recall that $\cX_{2g,\mathrm{a}}^+$ is defined to be $\mathbb{R}^{2g} \times \mathfrak{H}_g^+$ with the complex structure determined by \eqref{EqComplexStrOfX2g}. 
The \textit{universal uniformized Betti map} $\tilde{b}$ is defined to be the natural projection
\begin{equation}\label{EqUnivUniformizedBettiMap}
\tilde{b} \colon \cX_{2g,\mathrm{a}}^+ \rightarrow \mathbb{R}^{2g}.
\end{equation}
Then $\tilde{b}$ is semi-algebraic. For the complex structure on $\cX_{2g,\mathrm{a}}^+$ given by \eqref{EqComplexStrOfX2g}, it is clear that $\tilde{b}^{-1}(r)$ is complex analytic for each $r \in \mathbb{R}^{2g}$.

Recall that $(\mathfrak{A}_g)^{\mathrm{an}} \cong \Gamma\backslash \cX_{2g,\mathrm{a}}^+$ as complex spaces for a suitable congruence subgroup $\Gamma = \mathbb{Z}^{2g} \rtimes \Gamma_{\mathrm{Sp}_{2g}}$ of $P_{2g,\mathrm{a}}^{\mathrm{der}}(\mathbb{Z})$.
The family of abelian varieties $\cA_{\mathfrak{H}_g^+}$ defined as at the beginning of this subsection can be identified with the quotient space $(\mathbb{Z}^{2g} \rtimes \{1\}) \backslash \cX_{2g,\mathrm{a}}^+$. Now taking quotient by $\mathbb{Z}^{2g}$ on both sides of \eqref{EqUnivUniformizedBettiMap}, we obtain the \textit{universal Betti map}
\begin{equation}\label{EqUnivBettiMap}
b \colon \cA_{\mathfrak{H}_g^+} \rightarrow \mathbb{T}^{2g}
\end{equation}
where $\mathbb{T}^{2g}$ denotes the real torus of dimension $2g$. By the discussion below \eqref{EqUnivUniformizedBettiMap}, we have the following properties for $\tilde{b}$ and $b$.
\begin{enumerate}
\item[(i)] Both $\tilde{b}$ and $b$ are real-analytic, and $\tilde{b}$ is moreover semi-algebraic.
\item[(ii)] For each $r \in \mathbb{R}^{2g}$, resp. each $t \in \mathbb{T}^{2g}$, we have that $\tilde{b}^{-1}(r)$, resp. $b^{-1}(t)$, is complex analytic.
\item[(iii)] For each $\tau \in \mathfrak{H}_g^+$, the restriction $b|_{(\cA_{\mathfrak{H}_g^+})_{\tau}}$ is a group isomorphism.% (We omit the similar statement for $\tilde{b}$).
\end{enumerate}

We summarize our notations regarding the uniformizations in the following  diagram
\begin{equation}\label{DiagramUnivAbVarAndModuliSpace}
\xymatrix{
\cX_{2g,\mathrm{a}}^+ \ar[rd]_{\mathbf{u}} \ar[r] \ar@/^1pc/[rr]|-{\tilde{\pi}} & \cA_{\mathfrak{H}_g^+} \ar[r] \ar[d] \pullbackcorner & \mathfrak{H}_g^+ \ar[d]^{\mathbf{u}_G} \\
& \mathfrak{A}_g \ar[r]^{\pi} & \mathbb{A}_g
}
\end{equation}
with the uniformization $\mathbf{u}$ from \eqref{EqUnifUniversalAbVar} and the uniformization $\mathbf{u}_G$ from \eqref{EqUniformizationModuliSpace}.

\section{Betti map on arbitrary abelian schemes}\label{SectionBettiRevisited}
The goal of this section is to extend the definition of Betti map to an arbitrary abelian scheme. Moreover we choose to work on the original abelian scheme instead of on the pullback to the universal covering.

%Let $F$ be an algebraically closed subfield of $\mathbb{C}$.

Let $S$ be an irreducible quasi-projective variety over $\C$, and let $\pi_S \colon \cA \rightarrow S$ be an abelian scheme of relative dimension $g$. Then up to replacing $S$ by a finite covering and $\cA \rightarrow S$ by the corresponding base change, we have %the following diagram with each morphism defined over $F$
\begin{equation}\label{EqEmbedAbSchIntoUnivAbVar}
\xymatrix{
\cA \ar[r]^-{\iota} \ar[d]_{\pi_S} \pullbackcorner & \mathfrak{A}_g \ar[d]^{\pi} \\
S \ar[r]^-{\iota_S} & \mathbb{A}_g.
}
\end{equation}

Let $\Delta_0$ be a simply-connected open subset in $\mathbb{A}_g^{\mathrm{an}}$. Fix a component of $\widetilde{\Delta}_0$ of $\mathbf{u}_G^{-1}(\Delta_0)$ under the uniformization $\mathbf{u}_G \colon \mathfrak{H}_g^+ \rightarrow \mathbb{A}_g$. The fact that $\Delta_0$ is simply-connected implies that $\mathbf{u}_G|_{\widetilde{\Delta}_0}$ is an isomorphism in the category of complex spaces. Thus the universal Betti map \eqref{EqUnivBettiMap} induces a map $b_{\Delta_0} \colon \mathfrak{A}_g|_{\Delta_0} \rightarrow \mathbb{T}^{2g}$ by identifying $\mathfrak{A}_g|_{\Delta_0} = \pi^{-1}(\Delta_0)$ with $\cA_{\mathfrak{H}_g^+}|_{\widetilde{\Delta}_0}$.

For any $s \in S(\C)$, we can find a $\Delta_0$ as above such that $\iota_S(s) \in \Delta_0$. Let $\Delta$ be a component of $\iota_S^{-1}(\Delta_0)$ which contains $s$. Let $\cA_{\Delta} = \pi_S^{-1}(\Delta)$. Define
\begin{equation}\label{EqBettiMapActualScheme}
b_{\Delta} \colon \cA_{\Delta} \rightarrow \mathbb{T}^{2g}
\end{equation}
to be the composite of $\iota$ and $b_{\Delta_0}$. The following properties of $b_{\Delta}$ follows from the properties of the universal Betti map listed below \eqref{EqUnivBettiMap}.
\begin{enumerate}
\item[(i)] The map $b_{\Delta}$ is real-analytic.
\item[(ii)] For each $t \in \mathbb{T}^{2g}$, we have that $b_{\Delta}^{-1}(t)$, is complex analytic.
\item[(iii)] For each $s \in \Delta$, the restriction $b_{\Delta}|_{\cA_s}$ is a group isomorphism.
\end{enumerate}

Note that $b_{\Delta}$ is not unique as we can choose different components of $\bu_G^{-1}(\Delta_0)$. But $b_{\Delta}$ is unique up to $\Sp_{2g}(\Z)$ because $\A_g \cong \Gamma_{\GSp_{2g}} \backslash \mathfrak{H}_g^+$ for some congruence subgroup $\Gamma_{\GSp_{2g}}$ of $\Sp_{2g}(\Z)$.

The Betti map factors through the universal abelian variety by definition. Thus to study the generic rank of the Betti map, it often suffices to consider the subvarieties of $\mathfrak{A}_g$.% This will be our setup for the following sections.

Before moving on, let us see another way to define the Betti map. Let $\bu_S \colon \tilde{S} \rightarrow S^{\mathrm{an}}$ be the universal covering, and let $\cA_{\tilde{S}}$ be the pullback of $\cA \rightarrow S$ under $\bu_S$. Then the modular map $\iota \colon \cA \rightarrow \mathfrak{A}_g$ induces a natural morphism $\tilde{\iota} \colon \cA_{\tilde{S}} \rightarrow \cA_{\mathfrak{H}_g^+}$; see \eqref{EqUnivBettiMap} for notation. Then one can define $b_{\tilde{S}} \colon \cA_{\tilde{S}} \rightarrow \mathbb{T}^{2g}$ to be $\tilde{\iota}$ composed with the universal Betti map \eqref{EqUnivBettiMap}. Note that $b_{\tilde{S}}$ is uniquely determined, contrary to $b_{\Delta}$. Now $b_{\Delta}$ \eqref{EqBettiMapActualScheme} can be obtained as follows: Identify $\cA_{\Delta}$ and $\cA_{\tilde{\Delta}}$ by identifying $\Delta$ with a component $\tilde{\Delta}$ of $\bu_S^{-1}(\Delta)$, then $b_{\Delta}$ is $b_{\tilde{S}}$ restricted to $\cA_{\Delta}$.% Note that from this point of view, it is clear that $b_{\Delta}$ is not unique, but unique up to $\Sp_{2g}(\Z)$.

Now we are able to prove some easy properties of the Betti rank. Let $X$ be a closed irreducible subvariety of $\cA$ with $\pi_S(X) = S$. Then $\mathrm{rank}_{\R}(\mathrm{d}b_{\Delta}|_X) := \max_{x \in X^{\mathrm{sm}}(\C)\cap \cA_{\Delta}}(\mathrm{rank}_{\R}(\mathrm{d}b_{\Delta}|_X)_x)$ satisfies:
\begin{enumerate}
\item It is at most $2\min(\dim \iota(X), g)$ as $b_{\Delta}$ factors through $\iota$.
\item It is even by property (ii) above.
\item It does not depend on the choice of $\Delta$: Take a complex analytic irreducible component $\hat{X}$ of the inverse image of $X^{\mathrm{sm}}$ under $\cA_{\tilde{S}} \rightarrow \cA$, then $\mathrm{rank}_{\R}(\mathrm{d}b_{\Delta}|_X) = \mathrm{rank}_{\R}(\mathrm{d}b_{\tilde{S}}|_{\hat{X} \cap \cA_{\tilde{\Delta}}})$. As $\hat{X} \cap \cA_{\tilde{\Delta}}$ is open (and hence has positive Lebesgue measure) in $\hat{X}$, we have $\mathrm{rank}_{\R}(\mathrm{d}b_{\tilde{S}}|_{\hat{X} \cap \cA_{\tilde{\Delta}}}) = \mathrm{rank}_{\R}(\mathrm{d}b_{\tilde{S}}|_{\hat{X}})$ by Sard's theorem. The conclusion then follows.
%\item For each $s \in \Delta$, the restriction $b_{\Delta}|_{\cA_s}$ is a group isomorphism.
\end{enumerate}

%% Section 5
\section{Bi-algebraic system associated with $\mathfrak{A}_g$}\label{SectionBiAlgSystem}
The goal of this section is to give some further background knowledge on the universal abelian varieties, which will serve as our main tools to study the Betti map. There are two parts. The first part $\mathsection$\ref{SubsectionGenSpSGandBiAlg}-\ref{SubsectionWAS} is to introduce the functional transcendence theorem (called \textit{weak Ax-Schanuel}), and the second part $\mathsection$\ref{SubsectionMSV}-\ref{SubsectionGeomQuotByNormalSubgp} is Deligne-Pink's language of mixed Shimura varieties.\footnote{For readers not familiar with the language of Shimura varieties but only want to see how to study the generic rank of the Betti map or the relative Manin-Mumford conjecture via $X^{\mathrm{deg}}(t)$, it is probably a better idea to skip $\mathsection$\ref{SubsectionMSV}-\ref{SubsectionGeomQuotByNormalSubgp} as these two complicated subsections  will only be used in $\mathsection$\ref{SectionDegeneracyLocusClosed} and $\mathsection$\ref{SectionCriterionDegeneracy} (whose proofs we also suggest to skip at first).}

\subsection{Generically special subvarieties of sg type and bi-algebraic subvarieties}\label{SubsectionGenSpSGandBiAlg}
The goal of this subsection is to explain the relation between generically special subvarieties of sg type (see Definition~\ref{DefnSubvarOfsgType}) and bi-algebraic subvarieties of $\mathfrak{A}_g$.

Let us start with defining bi-algebraic subvarieties of $\mathfrak{A}_g$. Recall the uniformization $\bu \colon \cX_{2g,\mathrm{a}}^+ \rightarrow \mathfrak{A}_g$. By \cite[$\mathsection$4]{GaoTowards-the-And} $\cX_{2g,\mathrm{a}}^+$ can be embedded as an open, in the usual topology, semi-algebraic subset of a complex flag variety (hence algebraic) $\cX_{2g,\mathrm{a}}^\vee$.

\begin{defn}
\begin{enumerate}
\item[(i)] A subset $\widetilde{Y}$ of $\cX_{2g,\mathrm{a}}^+$ is said to be \textbf{irreducible algebraic} if it is a complex analytic irreducible component of $\cX_{2g,\mathrm{a}}^+ \cap W$ for some algebraic subvariety $W$ of $\cX_{2g,\mathrm{a}}^\vee$.
\item[(ii)] An irreducible subvariety $Y$ of $\mathfrak{A}_g$ is said to be \textbf{bi-algebraic} if one (and hence any) complex analytic irreducible component $\widetilde{Y}$ of $\mathbf{u}^{-1}(Y)$ is algebraic.
\end{enumerate}
\end{defn}

It is not hard to show that the intersection of two bi-algebraic subvarieties of $\mathfrak{A}_g$ is a finite union of irreducible bi-algebraic subvarieties of $\mathfrak{A}_g$. Hence for any subset $Z$ of $\mathfrak{A}_g$, there exists a smallest bi-algebraic subvariety $\mathfrak{A}_g$ which contains $Z$. We use $Z^{\mathrm{biZar}}$ to denote it. Note that $Z^{\mathrm{biZar}} \supseteq Z^{\mathrm{Zar}}$.

\begin{rmk}\label{RemarkBiAlgSystem}
%\begin{enumerate}
%\item[(i)] 
%\item[(ii)] 
There is a canonical way to endow $\mathfrak{H}_g^+$ with an algebraic structure which is compatible with $\widetilde{\pi} \colon \cX_{2g,\mathrm{a}}^+ \rightarrow \mathfrak{H}_g^+$ and the algebraic structure on $\cX_{2g,\mathrm{a}}^+$ defined above; see \cite[$\mathsection$4]{GaoTowards-the-And}. Then it is clear that for any $F$ bi-algebraic in $\mathfrak{A}_g$, we have that $\pi(F)$ is bi-algebraic in $\mathbb{A}_g$.
%\end{enumerate}
\end{rmk}

Bi-algebraic subvarieties of $\mathfrak{A}_g$ are closely related to generically special subvarieties of sg type defined in Definition~\ref{DefnSubvarOfsgType} by the following proposition.
\begin{prop}[\!\!{\cite[Proposition~3.3]{GaoA-special-point}}]\label{PropBiAlgAg}
Let $B$ be an irreducible subvariety of $\mathbb{A}_g$. Denote by $\mathfrak{A}_g|_B = \pi^{-1}(B)$. Then we have
\begin{align*}
& \{\text{generically special subvarieties of sg type of } \mathfrak{A}_g|_B\} \\
= & \{ \text{irreducible components of }(\mathfrak{A}_g|_B) \cap F : F \text{ irreducible bi-algebraic in } \mathfrak{A}_g \text{ with } B \subseteq \pi(F)\}.
\end{align*}
\end{prop}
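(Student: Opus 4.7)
My plan is to prove both inclusions by passing to the uniformization $\bu \colon \cX_{2g,\mathrm{a}}^+ \to \mathfrak{A}_g$ and invoking the group-theoretic description of bi-algebraic subvarieties of $\mathfrak{A}_g$. The preliminary step is the following classification: every irreducible bi-algebraic subvariety $F$ of $\mathfrak{A}_g$ admits a component of $\bu^{-1}(F)$ of the form
\[
\widetilde{F} = \{(v_0 + v',\, z) : v' \in V'(\R),\ z \in \widetilde{B}_0\} \subset \cX_{2g,\mathrm{a}}^+,
\]
where $\widetilde{B}_0 \subset \mathfrak{H}_g^+$ is a component of $\bu_G^{-1}(\pi(F))$, the subgroup $V' \subset V_{2g}$ is a $\Q$-subspace invariant under the algebraic monodromy group $H$ of $\widetilde{B}_0$, and $v_0 = v_{\mathrm{tor}} + v_{\mathrm{cst}} \in V_{2g}(\R)$ decomposes as a rational part $v_{\mathrm{tor}} \in V_{2g}(\Q)$ plus an $H$-fixed part $v_{\mathrm{cst}} \in V_{2g}^H(\R)$. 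The two inputs are: (i) $B_0 := \pi(F)$ is bi-algebraic in $\A_g$ by Remark~\ref{RemarkBiAlgSystem}, hence $\widetilde{B}_0$ is weakly special; and (ii) the fiber of $\widetilde{F}$ over each $z \in \widetilde{B}_0$ is an algebraic subset of $V_{2g}(\R) \cong \C^g$ descending to a closed subvariety of $\pi^{-1}(\bu_G(z))$, which by the classical characterization of closed subvarieties of abelian varieties with algebraic universal-cover preimage must be a translate of a $\Q$-rational subspace. Under $\bu$, the piece $V'$ descends to an abelian subscheme $\cB$, the rational part $v_{\mathrm{tor}}$ to a torsion section, and $v_{\mathrm{cst}}$ to a constant section through the largest isotrivial abelian subscheme over $B_0$.

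For the inclusion $(\supseteq)$, let $F$ be irreducible bi-algebraic with $B \subseteq \pi(F) = B_0$. The classification writes $F = \sigma + \sigma_0 + \cB$ over a suitable finite cover of $B_0$, with $\sigma$ a torsion section, $\sigma_0$ a constant section and $\cB$ an abelian subscheme of $\mathfrak{A}_g|_{B_0}$. Base-changing along $B \hookrightarrow B_0$ with this cover yields $F \cap \mathfrak{A}_g|_B = \rho(\sigma'|_{S'} + \sigma_0'|_{S'} + \cB'|_{S'})$ where $S' = B \times_{B_0} (\text{cover of } B_0)$, which is a generically special subvariety of sg type by Definition~\ref{DefnSubvarOfsgType}.

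For the inclusion $(\subseteq)$, let $Z = \rho(\sigma' + \sigma_0' + \cB')$ be generically special of sg type on $\mathfrak{A}_g|_B$. I read off the uniformizing data $(V', v_{\mathrm{tor}}, v_{\mathrm{cst}})$ from $(\cB', \sigma', \sigma_0')$ over a component $\widetilde{B} \subset \bu_G^{-1}(B)$: $V'$ is the $\Q$-subspace arising from $\cB'$, $v_{\mathrm{tor}} \in V_{2g}(\Q)$ from the torsion section $\sigma'$, and $v_{\mathrm{cst}}$ from the constant section $\sigma_0'$ (which by definition lies in the fixed subspace for the algebraic monodromy of $\widetilde{B}$). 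I then take $H$ to be a minimal normal $\Q$-subgroup of some Mumford-Tate group on $\mathfrak{H}_g^+$ whose associated algebraic monodromy contains that of $\widetilde{B}$, stabilizes $V'$, and fixes $v_{\mathrm{cst}}$. Fixing $z_0 \in \widetilde{B}$, setting $\widetilde{B}_0 := H(\R)^+ z_0$, and defining $\widetilde{F}$ by the formula above, one checks that $F := \bu(\widetilde{F})$ is an irreducible bi-algebraic subvariety of $\mathfrak{A}_g$ with $B \subseteq \pi(F) = \bu_G(\widetilde{B}_0)$ and $F \cap \mathfrak{A}_g|_B = Z$ by construction.

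The main obstacle is the preliminary classification: one must verify that the algebraicity of $\widetilde{F}$ simultaneously forces the $H$-invariance of $V'$ and the rational-plus-$H$-fixed decomposition of $v_0$, and that conversely every triple $(H, V', v_0)$ of this shape produces a bi-algebraic subvariety. This is the mixed-Shimura-variety analogue of the classical description of weakly special subvarieties of $\A_g$, and requires combining Deligne-Pink's structure of $P_{2g,\mathrm{a}} = V_{2g} \rtimes \GSp_{2g}$ with the fact that a closed algebraic subvariety of an abelian variety has algebraic preimage in its universal cover exactly when it is a translate of an abelian subvariety. The bookkeeping for the finite covers $\rho$ appearing in Definition~\ref{DefnSubvarOfsgType}, and the verification that the resulting $\widetilde{F}$ maps to an \emph{irreducible} subvariety of $\mathfrak{A}_g$, are routine once the classification is in hand.
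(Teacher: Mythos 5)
The paper itself gives no proof of this proposition: it is imported verbatim from \cite[Proposition~3.3]{GaoA-special-point}, where it is deduced from the structure theory of weakly special subvarieties of the mixed Shimura variety $(P_{2g,\mathrm{a}},\cX_{2g,\mathrm{a}}^+)$ (bi-algebraic $=$ weakly special, together with the geometric description of weakly special subvarieties of $\mathfrak{A}_g$ as translates of abelian subschemes by torsion plus constant sections over weakly special bases of $\A_g$). Your outline is the same route in spirit, but the ``preliminary classification'' you posit \emph{is} that structure theorem, i.e.\ essentially the entire content of the statement; reducing it to ``Deligne--Pink plus the fiberwise fact about abelian varieties'' leaves the main work undone. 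The fiberwise fact only tells you that each fiber of $F$ over $\pi(F)$ is a translate of an abelian subvariety; it does not show that these abelian subvarieties and the translating points are \emph{constant in Betti coordinates} over $\widetilde{B}_0$, which is exactly what your formula for $\widetilde{F}$ asserts. That globalization is where the genuinely nontrivial inputs enter (the theorem of the fixed part, Andr\'e's normality of the connected algebraic monodromy group, and the Ax--Lindemann-type characterization of bi-algebraic subvarieties), and your sketch does not supply them.

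There is also a concrete error in the classification as you state it: for the converse direction, requiring only that $V'$ be a $\Q$-subspace invariant under the algebraic monodromy group $H$ of $\widetilde{B}_0$ is not enough. Monodromy-invariance does not imply that $V'\otimes\R$ is a complex (sub-Hodge) subspace at the points of $\widetilde{B}_0$: over an isotrivial base $H$ is trivial, so any $\Q$-subspace qualifies, yet for a generic such $V'$ the set $\widetilde{F}=(v_0+V'(\R))\times\widetilde{B}_0$ is only real-algebraic, and $\bu(\widetilde{F})$ is not a complex subvariety of $\mathfrak{A}_g$ at all, hence certainly not bi-algebraic. The correct condition is that $V'$ underlie a sub-variation of Hodge structure over $B_0$ (equivalently, that it define an abelian subscheme of $\mathfrak{A}_g|_{B_0}\rightarrow B_0$), with $H$-invariance a consequence rather than a substitute; likewise the decomposition $v_0=v_{\mathrm{tor}}+v_{\mathrm{cst}}$ with $v_{\mathrm{cst}}$ in the Betti realization of the largest constant abelian subscheme has to be extracted from the fixed-part theorem, not assumed. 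Once the classification is stated and proved correctly (as in the cited reference), your two inclusions go through, modulo the finite-cover and irreducibility bookkeeping you already flag.
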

%We remark that the original version of \cite[Proposition~3.3]{GaoA-special-point} is incorrect. 

The following equivalent form of Proposition~\ref{PropBiAlgAg} is more practical for our use.
\begin{cor}\label{CorsgBiZar}
Let $Y$ be an irreducible subvariety of $\mathfrak{A}_g$. Then $\langle Y \rangle_{\mathrm{sg}}$ is an irreducible component of $(\mathfrak{A}_g|_{\pi(Y)}) \cap Y^{\mathrm{biZar}}$. In particular $\dim \langle Y \rangle_{\mathrm{sg}} - \dim \pi(Y) = \dim Y^{\mathrm{biZar}} - \dim \pi(Y)^{\mathrm{biZar}}$.
\end{cor}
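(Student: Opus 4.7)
The plan is to deduce both statements directly from Proposition~\ref{PropBiAlgAg} together with the minimality that defines $\langle Y \rangle_{\mathrm{sg}}$ and $Y^{\mathrm{biZar}}$. Throughout, I will tacitly pass to the irreducible component containing $Y$ when intersecting varieties, since Proposition~\ref{PropBiAlgAg} parametrizes \emph{irreducible} generically special subvarieties of sg type.

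For the set equality, I would first verify $\supseteq$: the variety $Y^{\mathrm{biZar}}$ is irreducible bi-algebraic and satisfies $\pi(Y)\subseteq \pi(Y^{\mathrm{biZar}})$, so by Proposition~\ref{PropBiAlgAg} the component of $(\mathfrak{A}_g|_{\pi(Y)})\cap Y^{\mathrm{biZar}}$ containing $Y$ is a generically special subvariety of sg type of $\mathfrak{A}_g|_{\pi(Y)}$ that contains $Y$, hence contains $\langle Y\rangle_{\mathrm{sg}}$. For the reverse inclusion I would apply Proposition~\ref{PropBiAlgAg} to $\langle Y\rangle_{\mathrm{sg}}$ itself: there exists an irreducible bi-algebraic $F\subseteq \mathfrak{A}_g$ with $\pi(Y)\subseteq\pi(F)$ such that $\langle Y\rangle_{\mathrm{sg}}$ is a component of $(\mathfrak{A}_g|_{\pi(Y)})\cap F$. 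Then $F\supseteq \langle Y\rangle_{\mathrm{sg}}\supseteq Y$, so minimality of $Y^{\mathrm{biZar}}$ forces $Y^{\mathrm{biZar}}\subseteq F$, and intersecting with $\mathfrak{A}_g|_{\pi(Y)}$ gives the desired containment.

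For the dimension statement, I would first prove the auxiliary identity $\pi(Y)^{\mathrm{biZar}}=\pi(Y^{\mathrm{biZar}})$. One direction uses Remark~\ref{RemarkBiAlgSystem}: $\pi(Y^{\mathrm{biZar}})$ is bi-algebraic and contains $\pi(Y)$, so $\pi(Y)^{\mathrm{biZar}}\subseteq \pi(Y^{\mathrm{biZar}})$. The other direction uses that $\pi^{-1}(\pi(Y)^{\mathrm{biZar}})$ is bi-algebraic in $\mathfrak{A}_g$ (preimages of algebraic subsets of $\mathfrak{H}_g^+$ under $\widetilde\pi$ are algebraic, and $\mathfrak{A}_g\to\mathbb{A}_g$ has connected fibers), so contains $Y^{\mathrm{biZar}}$, and then one applies $\pi$. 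Next, applying Proposition~\ref{PropBiAlgAg} with $B=\pi(Y^{\mathrm{biZar}})$ and $F=Y^{\mathrm{biZar}}$ shows that $Y^{\mathrm{biZar}}$ is itself a generically special subvariety of sg type of $\mathfrak{A}_g|_{\pi(Y^{\mathrm{biZar}})}$. Writing it as $\rho(\sigma'+\sigma_0'+\mathcal{B}')$ as in Definition~\ref{DefnSubvarOfsgType}, the map $Y^{\mathrm{biZar}}\to \pi(Y^{\mathrm{biZar}})$ is surjective with equidimensional fibers of dimension $\dim\mathcal{B}'=\dim Y^{\mathrm{biZar}}-\dim\pi(Y^{\mathrm{biZar}})$. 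Thus every component of the fiber product $(\mathfrak{A}_g|_{\pi(Y)})\cap Y^{\mathrm{biZar}}$ has dimension $\dim\pi(Y)+\dim\mathcal{B}'$, in particular the one equal to $\langle Y\rangle_{\mathrm{sg}}$, and rearranging using $\pi(Y)^{\mathrm{biZar}}=\pi(Y^{\mathrm{biZar}})$ yields the formula.

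The only delicate point I foresee is the book-keeping of irreducible components: $(\mathfrak{A}_g|_{\pi(Y)})\cap Y^{\mathrm{biZar}}$ need not be irreducible \emph{a priori}, and one has to consistently select the component through $Y$ on both sides of the equality. This is resolved cleanly by the equidimensionality statement above, which ensures the dimension count is independent of the chosen component, and by the fact that Proposition~\ref{PropBiAlgAg} matches irreducible generically special subvarieties of sg type with irreducible components of the relevant intersections.
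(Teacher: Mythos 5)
Your proposal is correct and follows essentially the same route as the paper: both inclusions of the set equality are obtained by applying Proposition~\ref{PropBiAlgAg} (once to $F = Y^{\mathrm{biZar}}$, once to write $\langle Y\rangle_{\mathrm{sg}} = (\mathfrak{A}_g|_{\pi(Y)})\cap F$ and invoke minimality of $Y^{\mathrm{biZar}}$), and the dimension formula is deduced from the equality $\pi(Y)^{\mathrm{biZar}} = \pi(Y^{\mathrm{biZar}})$ coming from Remark~\ref{RemarkBiAlgSystem}. You merely spell out two points the paper leaves implicit — the reverse containment $\pi(Y^{\mathrm{biZar}})\subseteq\pi(Y)^{\mathrm{biZar}}$ via bi-algebraicity of the preimage, and the equidimensionality of the fibers of $Y^{\mathrm{biZar}}\to\pi(Y^{\mathrm{biZar}})$ used in the dimension count — both of which are sound.
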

\begin{proof}
First note that $(\mathfrak{A}_g|_{\pi(Y)}) \cap Y^{\mathrm{biZar}}$ is equidimensional. So the ``In particular'' part follows from the main part and the fact that $\pi(Y)^{\mathrm{biZar}} = \pi(Y^{\mathrm{biZar}})$ (Remark~\ref{RemarkBiAlgSystem}).

Denote by $B = \pi(Y)$ for simplicity. Proposition~\ref{PropBiAlgAg} implies that each irreducible component of $(\mathfrak{A}_g|_B) \cap Y^{\mathrm{biZar}}$ is a generically special subvariety of sg type of $\mathfrak{A}_g|_B$. Hence $\langle Y \rangle_{\mathrm{sg}} \subseteq W$ where $W$ is an irreducible component of $(\mathfrak{A}_g|_B) \cap Y^{\mathrm{biZar}}$.

For the other inclusion, since $\langle Y \rangle_{\mathrm{sg}}$ is a generically special subvariety of sg type of $\mathfrak{A}_g|_B$, we have by Proposition~\ref{PropBiAlgAg} that
$\langle Y \rangle_{\mathrm{sg}}$ is an irreducible component of $(\mathfrak{A}_g|_B) \cap F$
 for some irreducible bi-algebraic subvariety $F$ of $\mathfrak{A}_g$. Then $Y \subseteq \langle Y \rangle_{\mathrm{sg}} \subseteq F$. Hence $Y^{\biZar} \subseteq F$. Thus $ (\mathfrak{A}_g|_B) \cap Y^{\biZar} \subseteq (\mathfrak{A}_g|_B) \cap F$.
 
In summary, we have $\langle Y \rangle_{\mathrm{sg}} \subseteq W \subseteq (\mathfrak{A}_g|_B) \cap Y^{\biZar}  \subseteq (\mathfrak{A}_g|_B) \cap F$, and that $\langle Y \rangle_{\mathrm{sg}}$ is an irreducible component of $(\mathfrak{A}_g|_B) \cap F$. So $W = \langle Y \rangle_{\mathrm{sg}}$ and we are done.
\end{proof}

\subsection{Weak Ax-Schanuel for $\mathfrak{A}_g$}\label{SubsectionWAS}
One of the most important tools we use to study the Betti map is the following weak Ax-Schanuel theorem for $\mathfrak{A}_g$ \cite[Theorem~1.1 or Theorem~3.5]{GaoAxSchanuel}.

\begin{thm}\label{ThmWAS}%[Weak Ax-Schanuel for $\mathfrak{A}_g$]\label{ThmWAS}
Let $\widetilde{Z}$ be a complex analytic irreducible subset of $\cX_{2g,\mathrm{a}}^+$. Then
\[
\dim \widetilde{Z}^{\mathrm{Zar}} + \dim (\mathbf{u}(\widetilde{Z}))^{\mathrm{Zar}} \ge \dim \widetilde{Z} + \dim (\mathbf{u}(\widetilde{Z}))^{\mathrm{biZar}},
\]
where $\widetilde{Z}^{\mathrm{Zar}}$ means the smallest irreducible algebraic subset of $\cX_{2g,\mathrm{a}}^+$ which contains $\widetilde{Z}$.% Moreover the inequality becomes an equality if $\widetilde{Z}$ is a complex analytic irreducible component of $\widetilde{Z}^{\mathrm{Zar}} \cap \mathbf{u}^{-1}(\mathbf{u}(\widetilde{Z})^{\mathrm{Zar}})$.
\end{thm}

%In the application we will also use the following pure weak Ax-Schanuel theorem proved by Mok-Pila-Tsimerman \cite[Theorem~1.1]{MokAx-Schanuel-for}. As a statement itself, it is implied by Theorem~\ref{ThmWAS}, but we state the result separately so that later on the application will be more clear.

%\begin{thm}[Weak Ax-Schanuel for $\mathbb{A}_g$]\label{ThmWASPure}
%Let $\widetilde{Z}_G$ be a complex analytic irreducible subset of $\mathfrak{H}_g^+$. Then
%\[
%\dim \widetilde{Z}_G^{\mathrm{Zar}} + \dim (\mathbf{u}_G(\widetilde{Z}_G))^{\mathrm{Zar}} \ge \dim \widetilde{Z}_G + \dim (\mathbf{u}_G(\widetilde{Z}_G))^{\mathrm{biZar}},
%\]
%Moreover the inequality becomes an equality if $\widetilde{Z}_G$ is a complex analytic irreducible component of $\widetilde{Z}_G^{\mathrm{Zar}} \cap \mathbf{u}_G^{-1}(\mathbf{u}_G(\widetilde{Z}_G)^{\mathrm{Zar}})$.
%\end{thm}

\subsection{A quick introduction to Shimura varieties}\label{SubsectionMSV}
We gather some notation and facts on Shimura varieties. We will need the knowledge (only) for the proofs in $\mathsection$\ref{SectionDegeneracyLocusClosed} and $\mathsection$\ref{SectionCriterionDegeneracy}.% and prove Theorem~\ref{ThmDegeneracyLocusZarClosed} and Theorem~\ref{ThmCriterionXDegenerate}. This subsection is only needed for $\mathsection$\ref{SectionDegeneracyLocusClosed} and $\mathsection$\ref{SectionCriterionXDegenerate}. 
%For readers not familiar with the language of Shimura varieties but only want to see how to translate the study of the generic rank of the Betti map into studying $X^{\mathrm{deg}}(t)$, it is probably a better idea to go directly to $\mathsection$\ref{SectionSubvarBettiNonMaxRank} and then come back to gather the required background knowledge to study $X^{\mathrm{deg}}(t)$.
%skip these two sections as well as the rest of this section for the first time reading the paper.

Recall that in $\mathsection$\ref{SubsectionUnifSpaceOfModuliSpace}, we have associated a reductive $\mathbb{Q}$-group $\mathrm{GSp}_{2g}$ and a complex space $\mathfrak{H}_g^+$ to the moduli space $\mathbb{A}_g$. In other words we have associated a pair $(\mathrm{GSp}_{2g}, \mathfrak{H}_g^+)$ to $\mathbb{A}_g$, where 
\begin{itemize}
\item $\mathrm{GSp}_{2g}$ is a reductive $\mathbb{Q}$-group;
\item $\mathfrak{H}_g^+$ is a complex space on which $\mathrm{GSp}_{2g}(\mathbb{R})^+$ acts transitively;
\item As a complex space, $\mathbb{A}_g$ is the quotient of $\mathfrak{H}_g^+$ by a congruence subgroup of $\mathrm{GSp}_{2g}^{\mathrm{der}}(\mathbb{Z})$.
\end{itemize}
This pair $(\mathrm{GSp}_{2g},\mathfrak{H}_g^+)$ is a special case of \textit{pure Shimura datum}, and the third bullet point makes $\mathbb{A}_g$ a \text{pure Shimura variety}. In general, a pure Shimura datum is a pair $(G,\cX_G^+)$ such that $G$ is a reductive $\mathbb{Q}$-group and $\cX_G^+$ is a complex space on which $G^{\mathrm{der}}(\mathbb{R})^+$ acts transitively (along with some other properties). A pure Shimura variety is a quotient space $\Gamma_G \backslash \cX_G^+$ for some congruence subgroup $\Gamma_G$ of $G^{\mathrm{der}}(\mathbb{Z})$.

%Given two pure Shimura data $(H,\cY_G^+)$ and $(G,\cX_G^+)$, a map $f \colon (H,\cY_G^+) \rightarrow (G,\cX_G^+)$ is called a \textit{Shimura morphism} if $f$ is a group homomorphism on the underlying groups and is a holomorphic morphism on the underlying complex spaces, and that $f(h \cdot \widetilde{y}_G) = f(h) \cdot f(\widetilde{y}_G)$ for any $h \in H^{\mathrm{der}}(\mathbb{R})^+$ and any $\widetilde{y}_G \in \cY_G^+$. For a Shimura morphism $f$ of this form, we say that $(f(H), f(\cY_G^+))$ is a \textit{Shimura subdatum} of $(G,\cX_G^+)$. Apply the discussion to $(G,\cX_G^+) = (\mathrm{GSp}_{2g}, \mathfrak{H}_g^+)$, we get the definition of Shimura subdata of $(\mathrm{GSp}_{2g}, \mathfrak{H}_g^+)$. Define the \textit{special subvarieties} of $\mathbb{A}_g$ to be the subvarieties of the form $\mathbf{u}_G(\cY_G^+)$ where $\cY_G^+$ is the underlying space of some Shimura subdaum $(H, \cY_G^+)$ of $(\mathrm{GSp}_{2g}, \mathfrak{H}_g^+)$.

Next we turn to the universal abelian variety $\mathfrak{A}_g$. In $\mathsection$\ref{SubsectionUnivAbVar} we have associated with it a $\mathbb{Q}$-group $P_{2g,\mathrm{a}}$ and a complex space $\cX_{2g,\mathrm{a}}^+$. Note that $P_{2g,\mathrm{a}}$ is not a reductive group. Nevertheless we have the following properties for the pair $(P_{2g,\mathrm{a}}, \cX_{2g,\mathrm{a}}^+)$: 
\begin{itemize}
\item $P_{2g,\mathrm{a}}$ is a $\mathbb{Q}$-group, whose unipotent radical is a vector group;
\item $\cX_{2g,\mathrm{a}}^+$ is a complex space on which $P_{2g,\mathrm{a}}(\mathbb{R})^+$ acts transitively;
\item As a complex space, $\mathfrak{A}_g$ is the quotient of $\cX_{2g,\mathrm{a}}^+$ by a congruence subgroup of $P_{2g,\mathrm{a}}^{\mathrm{der}}(\mathbb{Z})$.
\end{itemize}
This makes the pair $(P_{2g,\mathrm{a}}, \cX_{2g,\mathrm{a}}^+)$ a \textit{mixed Shimura datum of Kuga type}, and the third bullet point makes $\mathfrak{A}_g$ a \textit{mixed Shimura variety of Kuga type}. In general a mixed Shimura datum of Kuga type is a pair $(P,\cX^+)$ such that $P$ is a $\mathbb{Q}$-group whose unipotent radical is a vector group, and $\cX^+$ is a complex space on which $P(\mathbb{R})^+$ acts transitively (along with some other properties). A mixed Shimura variety of Kuga type is a quotient space $\Gamma \backslash \cX^+$ for some congruence subgroup $\Gamma$ of $P^{\mathrm{der}}(\mathbb{Z})$. 

It is worth pointing out that any pure Shimura datum is a mixed Shimura datum of Kuga type (such that the unipotent radical of the underlying group is trivial). Given two mixed Shimura data of Kuga type $(Q,\cY^+)$ and $(P,\cX^+)$, a map $f \colon (Q,\cY^+) \rightarrow (P,\cX^+)$ is called a \textit{Shimura morphism} if $f$ is a group homomorphism on the underlying groups and is a holomorphic morphism on the underlying complex spaces, and that $f(q \cdot \widetilde{y}) = f(q) \cdot f(\widetilde{y})$ for any $q \in Q(\mathbb{R})^+$ and any $\widetilde{y} \in \cY^+$. For a Shimura morphism $f$ of this form, we say that $(f(Q), f(\cY^+))$ is a \textit{mixed Shimura subdatum} of $(P,\cX^+)$. Applying the discussion to $(P,\cX^+) = (P_{2g,\mathrm{a}} , \cX_{2g,\mathrm{a}}^+)$, we get the definition of mixed Shimura subdata of $(P_{2g,\mathrm{a}}, \cX_{2g,\mathrm{a}}^+)$. Applying the discussion to $(G,\cX_G^+) = (\mathrm{GSp}_{2g}, \mathfrak{H}_g^+)$, we get the definition of Shimura subdata of $(\mathrm{GSp}_{2g}, \mathfrak{H}_g^+)$.

It is known that for each mixed Shimura subdatum $(Q,\cY^+)$ of $(P_{2g,\mathrm{a}}, \cX_{2g,\mathrm{a}}^+)$, the unipotent radical of $Q$ is $V_{2g} \cap Q$ by weight reasons; see \cite[Proposition~2.9]{GaoTowards-the-And}.

Define the \textbf{special subvarieties} of $\mathfrak{A}_g$ to be the subvarieties of the form $\mathbf{u}(\cY^+)$ where $\cY^+$ is the underlying space of some mixed Shimura subdatum $(Q,\cY^+)$ of $(P_{2g,\mathrm{a}}, \cX_{2g,\mathrm{a}}^+)$. Define the \textit{special subvarieties} of $\mathbb{A}_g$ to be the subvarieties of the form $\mathbf{u}_G(\cY_G^+)$ where $\cY_G^+$ is the underlying space of some Shimura subdaum $(H, \cY_G^+)$ of $(\mathrm{GSp}_{2g}, \mathfrak{H}_g^+)$.

Let us end this subsection by the following proposition on the geometric meaning of special subvarieties of $\mathfrak{A}_g$ and beyond. Recall the notations $\widetilde{\pi} \colon P_{2g,\mathrm{a}} \rightarrow \mathrm{GSp}_{2g}$ at the end of $\mathsection$\ref{SubsectionUnivAbVar}  and the uniformizations (see \eqref{DiagramUnivAbVarAndModuliSpace})
\begin{equation}\label{DiagramUnivAbVarAndModuliSpace2}
\xymatrix{
\cX_{2g,\mathrm{a}}^+ \ar[r]^-{\widetilde{\pi}} \ar[d]_{\mathbf{u}} & \mathfrak{H}_g^+ \ar[d]^{\mathbf{u}_G} \\
\mathfrak{A}_g \ar[r]^{\pi} & \mathbb{A}_g.
}
\end{equation}

\begin{prop}\label{PropSpecialSubvarUnivAb}
Let $M$ be a special subvariety of $\mathfrak{A}_g$. Then $M_G := \pi(M)$ is a special subvariety of $\mathbb{A}_g$, and $M$ is the translate of an abelian subscheme of $\mathfrak{A}_g|_{M_G} \rightarrow M_G$ by a torsion section. Conversely all special subvarieties of $\mathfrak{A}_g$ are obtained in this way.

More precisely, if $M$ is associated with the mixed Shimura subdatum of Kuga type $(Q,\cY^+)$ of $(P_{2g,\mathrm{a}}, \cX_{2g,\mathrm{a}}^+)$ (namely $M = \mathbf{u}(\cY^+)$), then the relative dimension of $M \rightarrow M_G$ is $g_Q := \frac{1}{2}\dim(V_{2g} \cap Q)$.% Moreover $M_G$ is associated with the pure Shimura subdatum $(G_Q, \cY_{G_Q}^+)$ of $(\mathrm{GSp}_{2g}, \mathfrak{H}_g^+)$ (namely $M_G = \mathbf{u}_G(\cY_{G_Q}^+)$), where $G_Q = \widetilde{\pi}(Q)$ and $\cY_{G_Q}^+ = \widetilde{\pi}(\cY^+)$.
\end{prop}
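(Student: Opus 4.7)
The plan is to unpack the definition of a mixed Shimura subdatum of Kuga type of $(P_{2g,\mathrm{a}}, \cX_{2g,\mathrm{a}}^+)$ in terms of the semi-direct decomposition $P_{2g,\mathrm{a}} = V_{2g} \rtimes \GSp_{2g}$ and the action formula \eqref{EqActionOfP2gOnX2g}, and then transport the resulting structure across the uniformization $\mathbf{u}$.

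First I would deal with the projection statement. The natural map $\widetilde{\pi} \colon P_{2g,\mathrm{a}} \rightarrow \GSp_{2g}$ (with kernel $V_{2g}$) induces a Shimura morphism $(P_{2g,\mathrm{a}}, \cX_{2g,\mathrm{a}}^+) \rightarrow (\GSp_{2g}, \mathfrak{H}_g^+)$, descending to $\pi$ as in \eqref{DiagramUnivAbVarAndModuliSpace2}. If $(Q, \cY^+)$ defines $M$, then $(\widetilde{\pi}(Q), \widetilde{\pi}(\cY^+))$ is a pure Shimura subdatum of $(\GSp_{2g}, \mathfrak{H}_g^+)$, so $M_G = \mathbf{u}_G(\widetilde{\pi}(\cY^+))$ is special in $\A_g$.

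The heart of the argument is the fiber analysis. By \cite[Proposition~2.9]{GaoTowards-the-And} the unipotent radical of $Q$ is $U_Q := V_{2g} \cap Q$, a $\Q$-vector subgroup of $V_{2g}$ of dimension $2g_Q$. Fix $\widetilde{y} = (v_0, \tau) \in \cY^+$. The action formula \eqref{EqActionOfP2gOnX2g} shows that the $U_Q(\R)$-orbit of $\widetilde{y}$ is $\{(v_0 + u, \tau) : u \in U_Q(\R)\}$, lying entirely in the fiber $\widetilde{\pi}^{-1}(\tau)$. Since $Q$ is an extension of $\widetilde{\pi}(Q)$ by $U_Q$ and $\cY^+ = Q(\R)^+ \cdot \widetilde{y}$, every fiber of $\cY^+ \rightarrow \widetilde{\pi}(\cY^+)$ is a translate of $U_Q(\R)$; under the complex structure \eqref{EqComplexStrOfX2g} this translate becomes a complex affine subspace of complex dimension $g_Q$. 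Taking the quotient by $\Gamma = \Z^{2g} \rtimes \Gamma_{\Sp_{2g}}$, the lattice $U_Q(\R) \cap \Z^{2g}$ is full-rank (as $U_Q$ is defined over $\Q$), so the fiber of $M \rightarrow M_G$ over each point is an abelian variety of complex dimension $g_Q$. Varying $\tau$ yields an abelian subscheme $\cB$ of $\mathfrak{A}_g|_{M_G} \rightarrow M_G$, and the relative-dimension claim follows. The class of $v_0$ modulo $U_Q$ is controlled by the $\Q$-rationality of $Q$, forcing $v_0 \bmod U_Q \in V_{2g}(\Q)/U_Q(\Q)$; modulo $\Z^{2g}$ this is precisely the data of a torsion section of $\mathfrak{A}_g|_{M_G}$, so $M$ is the translate of $\cB$ by a torsion section.

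For the converse, given a special $M_G$ arising from a pure subdatum $(H, \cY_G^+)$ and an abelian subscheme $\cB$ of $\mathfrak{A}_g|_{M_G}$ translated by a torsion section, the abelian subscheme corresponds to an $H$-stable $\Q$-subspace $U \subseteq V_{2g}$ (the $\Q$-form of the sub-VHS) and the torsion section to a class in $V_{2g}(\Q)/U(\Q)$. One forms $Q = U \rtimes H$ inside $P_{2g,\mathrm{a}}$, suitably conjugated by a lift of the torsion class, and takes $\cY^+$ to be the $Q(\R)^+$-orbit of a compatible base point; this pair is a mixed Shimura subdatum of Kuga type whose associated special subvariety is $M$. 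The main obstacle is to justify that the translation class indeed produces a \emph{torsion} section (and not just a constant one), which rests on the $\Q$-rationality of $Q$ together with Pink's structure theorems \cite{PinkThesis} for mixed Shimura data.
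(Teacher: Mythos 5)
The paper itself states Proposition~\ref{PropSpecialSubvarUnivAb} without proof: it is offered as background, implicitly relying on Pink's structure theory of mixed Shimura data and the author's earlier work, so there is no in-paper argument to measure you against. Your sketch follows exactly that standard route and is correct in outline: project the datum along $\widetilde{\pi}$ to get speciality of $M_G$, identify the fibres of $\cY^+ \rightarrow \widetilde{\pi}(\cY^+)$ with cosets of $U_Q(\R) = (V_{2g}\cap Q)(\R)$, divide by the full-rank lattice to get an abelian scheme of relative dimension $g_Q$, invoke $\Q$-rationality of $Q$ for the torsion translate, and reverse the construction with $Q = U \rtimes H$ conjugated by a rational vector for the converse.

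Two steps, however, are asserted rather than proved. First, that each fibre of $\cY^+ \rightarrow \widetilde{\pi}(\cY^+)$ is a \emph{single} $U_Q(\R)$-orbit does not follow from the one-line remark that $Q$ is an extension of $\widetilde{\pi}(Q)$ by $U_Q$: an element $(u,h) \in Q(\R)^+$ with $h\tau = \tau$ sends $(v_0,\tau)$ to $(u + hv_0, \tau)$, and one must know $u + hv_0 - v_0 \in U_Q(\R)$, which is not a formal consequence of the extension structure; it is Pink's structure theorem for Kuga-type data (the equivariant real-analytic splitting $\cY^+ \cong U_Q(\R) \times \cY_{G_Q}^+$), so cite that rather than deduce it. Note also that the coset $v_0 + U_Q(\R)$ is complex in the fibre because $U_Q \otimes \R$ is stable under the complex structure $J_\tau$, $U_Q$ being a $G_Q$-submodule and hence a sub-Hodge structure of type $(-1,0)+(0,-1)$ at each $\tau$. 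Second, the torsion-section step --- which you rightly single out as the crux --- needs an actual argument: pass to the quotient of $(V_{2g}\rtimes G_Q,\ V_{2g}(\R)\times \cY_{G_Q}^+)$ by $U_Q$; the image of $Q$ there is a $\Q$-subgroup mapping isomorphically onto $G_Q$, hence, by vanishing of $H^1(G_Q, V_{2g}/U_Q)$ in characteristic $0$, equal to $(\bar{w},1)\left(\{0\}\rtimes G_Q\right)(\bar{w},1)^{-1}$ for some $\bar{w} \in (V_{2g}/U_Q)(\Q)$; since $V_{2g}/U_Q$ has Hodge type $(-1,0)+(0,-1)$, no nonzero vector is fixed by $h_{\tilde{y}}(\S)$, so the image of $\cY^+$ is the constant section at $\bar{w}$, a rational and therefore torsion section of the quotient family, and pulling back exhibits $M$ as the translate of the abelian subscheme $\mathbf{u}\bigl(U_Q(\R)\times\cY_{G_Q}^+\bigr)$ (suitably conjugated) by a torsion section. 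With these two points filled in --- both available in Pink's thesis and in \cite{GaoTowards-the-And} --- your proposal is a complete and correct proof; the analogous verifications for the converse (that the constructed $(Q,\cY^+)$ satisfies the subdatum axioms) are routine.
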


\subsection{Quotient by a normal group}\label{SubsectionGeomQuotByNormalSubgp}
In $\mathsection$\ref{SectionDegeneracyLocusClosed} and $\mathsection$\ref{SectionCriterionDegeneracy}, we need the geometric interpretation of the operation of taking the \textit{quotient mixed Shimura varieties of Kuga type}. We explain this in the current subsection.

%The discussion in this subsection will (only) be used in the proofs in $\mathsection$\ref{SectionDegeneracyLocusClosed} and $\mathsection$\ref{SectionCriterionDegeneracy}.% (prove Theorem~\ref{ThmDegeneracyLocusZarClosed} and Theorem~\ref{ThmCriterionXDegenerate}). For readers not familiar with Shimura varieties, it is suggested to skip this subsection and these two sections for the first time reading the paper.

%More precisely for Theorem~\ref{}, we need to take the operation of taking the \textit{quotient mixed Shimura varieties of Kuga type}. Let us explain the geometric meaning.

The setting is as follows. Let $(Q,\cY^+)$ be a mixed Shimura subdatum of $(P_{2g,\mathrm{a}}, \cX_{2g,\mathrm{a}}^+)$, and let $M = \mathbf{u}(\cY^+)$ be the associated special subvariety of $\mathfrak{A}_g$. Take a normal subgroup $N$ of $Q$. Pink \cite[2.9]{PinkThesis} constructed the \textit{quotient mixed Shimura datum} $(Q,\cY^+)/N$ whose underlying group is $Q/N$.% We explain the geometric meaning of taking the quotient in this subsection.

By Proposition~\ref{PropSpecialSubvarUnivAb}, $M_G := \pi(M)$ is a special subvariety of $\mathbb{A}_g$. Let $(G_Q, \cY_{G_Q}^+)$ be the pure Shimura subdatum of $(\mathrm{GSp}_{2g}, \mathfrak{H}_g^+)$ in Proposition~\ref{PropSpecialSubvarUnivAb}, then $M_G = \mathbf{u}_G(\cY_{G_Q}^+)$.

Denote by $V_Q = V_{2g} \cap Q$ and $V_N = V_{2g} \cap N$. We explained above \eqref{DiagramUnivAbVarAndModuliSpace2} that $V_Q$ is the unipotent radical of $Q$. Since $N \lhd Q$, group theory implies that $V_N$ is the unipotent radical of $N$. Denote by $G_N = N/V_N$, then $G_N$ is a normal subgroup of $G_Q$.

%Replace $M_G$ by a finite covering such that all abelian subvarieties of the geometric generic fiber of $\mathfrak{A}_g|_{M_G} \rightarrow M_G$ are defined over $\mathbb{C}(M_G)$, or equivalently extend to an abelian subscheme of $\mathfrak{A}_g|_{M_G} \rightarrow M_G$. 
%Replacing $M_G$ by a finite covering, 
Proposition~\ref{PropSpecialSubvarUnivAb} says that %we may and do assume that  
$\pi|_M \colon M \rightarrow M_G$ itself is an abelian scheme of relative dimension $g_Q = \frac{1}{2}\dim (V_{2g} \cap Q)$. Let $\epsilon \colon M_G \rightarrow M$ be the zero section.\footnote{It is a torsion section of $\mathfrak{A}_g|_{M_G} \rightarrow M_G$.} It induces a Levi decomposition $Q = V_Q \rtimes G_Q$ and a semi-algebraic isomorphism 
\[
\cY^+ \cong V_Q(\mathbb{R}) \times \cY_{G_Q}^+
\]
such that $\mathbf{u}(\{0\} \times \cY_{G_Q}^+) = \epsilon (M_G)$. In the rest of this subsection, we shall use this identification of $\cY^+$ with $V_Q(\mathbb{R}) \times \cY_{G_Q}^+$.

Deligne \cite[Rappel~4.4.3]{DeligneHodgeII} proved that $\cY^+ \rightarrow \cY^+_{G_Q}$ is a variation of Hodge structure of type $(-1,0)+(0,-1)$.

We have $V_N \lhd Q$ since $N\lhd Q$ and $V_N$ is the unipotent radical of $N$. The reductive group $G_Q$, as a subgroup of $\mathrm{GSp}_{2g}$, acts on $V_{2g}$. Now $V_N \lhd Q$ implies that $V_N$ is a $G_Q$-submodule of $V_{2g}$, so $V_N$ is a sub-Hodge structure of $V_Q$. Thus $V_N(\R) \times \cY_{G_Q}^+ \rightarrow \cY^+_{G_Q}$ is a sub-variation of Hodge structure of $\cY^+ \rightarrow \cY^+_{G_Q}$. Thus $\mathbf{u}(V_N(\mathbb{R}) \times \cY_{G_Q}^+)$ is an abelian subscheme of $M \rightarrow M_G$ by \cite[Rappel~4.4.3]{DeligneHodgeII}.

The quotient $\widetilde{p}_N \colon (Q,\cY^+) \rightarrow (Q,\cY^+)/N$ can be constructed in two steps. First we take the quotient $(Q_0,\cY_0^+) := (Q,\cY^+)/V_N$, and then we take $(Q', \cY^{\prime+}) := (Q_0,\cY_0^+)/G_N$.

The geometric meaning of the quotient $\widetilde{p}_0 \colon (Q,\cY^+) \rightarrow (Q_0,\cY_0^+) = (Q,\cY^+)/V_N$ is as follows. We have the following commutative diagram (where $\Gamma_0 = \widetilde{p}_0(\Gamma \cap Q(\mathbb{R}))$)
\begin{equation}\label{EqFirstStepQuotMSV}
\xymatrix{
\cY^+ \ar[r]^-{\widetilde{p}_0} \ar[d]_{\mathbf{u}|_{\cY^+}} & \cY_0^+ \ar[d] \\
M \ar[r]^-{p_0} \ar[d]_{\pi|_M} & M_0 := \Gamma_0 \backslash \cY_0^+ \ar[d]^{\pi_0} \\
M_G \ar[r]^-{\mathrm{id}_{M_G}} & M_G
}
\end{equation}
such that $M_0 \rightarrow M_G$ is the abelian scheme obtained by taking the quotient of $M \rightarrow M_G$ by $\mathbf{u}(V_N(\mathbb{R}) \times \cY_{G_Q}^+)$. Denote by $g_N = \frac{1}{2} \dim V_N$, then the relative dimension of $M_0 \rightarrow M_G$ is $g_Q - g_N$.

To explain the quotient $\widetilde{p}' \colon (Q_0, \cY_0^+) \rightarrow (Q', \cY^{\prime+}) = (Q_0,\cY_0^+)/G_N$, we need the following preliminary. We know that $G_N$ is a normal subgroup of $G_Q$. We take for granted that we can do the operation $(G_Q, \cY_{G_Q}^+)/G_N$ for pure Shimura datum, and this quotient gives a morphism $p_{G_N} \colon M_G \rightarrow M'_G$.\footnote{See \cite[2.9]{PinkThesis} or \cite[Definition~2.1]{UllmoA-characterisat}. In this paper we mostly only need the notion, so we choose not go into more details on this in the preliminary part.}% For readers who want to have a better understanding of this quotient, see the explanation below \eqref{EqGeomMeaningQuotShimuraCrit}.}% It is known that each fiber of $p_{G_N}$ has dimension $\dim M_G - \dim M'_G$.

For each $y'_G \in M'_G$, the inverse image $(p_{G_N})^{-1}(y'_G)$ equals $\mathbf{u}_G(G_N(\mathbb{R})^+\widetilde{y}_G)$ for some $\widetilde{y}_G \in \cY_{G_Q}^+$. So the connected algebraic monodromy group of $(p_{G_N})^{-1}(y'_G)$ is $G_N^{\mathrm{der}}$. 
Since $N \lhd Q$, we have $G_N = N/V_N \lhd Q/V_N$. Hence $G_N$ acts trivially on $V_Q/V_N$. So $M_0|_{(p_{G_N})^{-1}(y'_G)} \rightarrow (p_{G_N})^{-1}(y'_G)$ is an isotrivial abelian scheme by Deligne's Theorem of the Fixed Part \cite[Corollaire~4.1.2]{DeligneHodgeII}. We have the following commutative diagram
\begin{equation}\label{EqSecondStepQuotMSV}
\xymatrix{
\cY_0^+ \ar[r]^-{\widetilde{p}'} \ar[d] & \cY_0' \ar[d] \\
M_0 = \Gamma_0 \backslash \cY_0^+ \ar[r]^-{p'} \ar[d]_{\pi_0} & M' \ar[d]^{\pi'} \\
M_G \ar[r]^-{p_{G_N}} & M'_G
}
\end{equation}
where $\pi'$ is an abelian scheme (of relative dimension $g_Q - g_N$) such that each closed fiber of $M_0|_{(p_{G_N})^{-1}(y'_G)} \rightarrow (p_{G_N})^{-1}(y'_G)$ is the abelian variety $(\pi')^{-1}(y'_G)$. In other words, the lower box is an intermediate step of the modular map
\[
\xymatrix{
M_0 \ar[r] \ar[d]_{\pi_0} \pullbackcorner & \mathfrak{A}_{g_Q-g_N} \ar[d] \\
M_G \ar[r] & \mathbb{A}_{g_Q-g_N}.
}
\]

We end this subsection by summarizing the quotient $\widetilde{p}_N \colon (Q,\cY^+) \rightarrow (Q,\cY^+)/N$ (after taking the uniformizations) in the following commutative diagram:
\begin{equation}\label{EqGeomMeaningQuotShimura}
\xymatrix{
M \ar[r]|-{p_0} \ar[d]_{\pi|_M} \ar@/^1pc/[rr]|-{p_N} & M_0 := \Gamma_0 \backslash \cY_0^+ \ar[d]_{\pi_0} \ar[r]|-{p'} \pullbackcorner & M' \ar[d]_{\pi'} \ar[r] \pullbackcorner & \mathfrak{A}_{g_Q-g_N} \ar[d] \\
M_G \ar[r]^-{\mathrm{id}_{M_G}} & M_G \ar[r]^-{p_{G_N}} & M'_G \ar[r] & \A_{g_Q-g_N}
}
\end{equation}
where each vertical arrow is an abelian scheme, with the left one of relative dimension $g_Q$ and the other three of relative dimension $g_Q - g_N$. Moreover $M_0|_{p_{G_N}^{-1}(b')}$ is isotrivial for any $b' \in M'_G$ by the discussion above \eqref{EqSecondStepQuotMSV}. And for any $x' \in M'$, we have that $p_N^{-1}(x')$ is the translate of an abelian subscheme of $M|_{\pi(p_N^{-1}(x'))} \rightarrow \pi(p_N^{-1}(x'))$ of relative dimension $g_Q - (g_Q - g_N) = g_N$ by a constant section. %, and $\dim p_N^{-1}(x) = g_N + \dim M_G - \dim M'_G$ (since each fiber of $p_{G_N}$ has dimension $\dim M_G - \dim M'_G$).

%Next let $(G,\cX_G^+)$ be a pure Shimura datum, and let $G_N$ be a normal subgroup of $G^{\mathrm{der}}$. We can define a quotient pure Shimura datum $(G,\cX_G^+)/G_N$, whose underlying group is $G/G_N$ and whose underlying space is a $(G/G_N)^{\mathrm{der}}(\mathbb{R})^+$-orbit; see \cite[]{}. The properties we need for this quotient will be summarized in $\mathsection$\ref{SubsectionGeomQuotByNormalSubgp}. Other than that we will only need the existence of such a quotient. Hence we do not go into details for this at the moment.

%% Section 6
\section{From Betti map to the $t$-th degeneracy locus}\label{SectionSubvarBettiNonMaxRank}
In $\mathsection$\ref{SectionSubvarBettiNonMaxRank}-\ref{SectionCriterionDegeneracy} we will prove Theorem~\ref{ThmCriterionDegIntro} for $X \subseteq \mathfrak{A}_g$. It is arranged as follows: $\mathsection$\ref{SectionSubvarBettiNonMaxRank} transfers the study of the generic rank of the Betti map to the $t$-th degenerate locus of $X$ for some particular $t$'s, $\mathsection$\ref{SectionDegeneracyLocusClosed} proves the Zariski closedness of the $t$-th degenerate locus $X^{\mathrm{deg}}(t)$, and $\mathsection$\ref{SectionCriterionDegeneracy} gives the criterion to $X^{\mathrm{deg}}(t) = X$.

Let us fix the notations. Recall the uniformizations \eqref{DiagramUnivAbVarAndModuliSpace2}
\[%\begin{equation}\label{EqDiagramUnivAbVarAndModuliSpaceBettiSection}
\xymatrix{
\cX_{2g,\mathrm{a}}^+ \ar[r]^-{\widetilde{\pi}} \ar[d]_{\mathbf{u}} & \mathfrak{H}_g^+ \ar[d]^{\mathbf{u}_G} \\
\mathfrak{A}_g \ar[r]^{\pi} & \mathbb{A}_g
}
\]%\end{equation}
and the uniformized universal Betti map $\tilde{b} \colon \cX_{2g,\mathrm{a}}^+ \rightarrow \mathbb{R}^{2g}$ defined in \eqref{EqUnivUniformizedBettiMap}. Let $X$ be an irreducible subvariety of $\mathfrak{A}_g$. Fix a complex analytic irreducible component $\widetilde{X}$ of $\mathbf{u}^{-1}(X)$.

%It is clear that $\mathrm{rank}_{\mathbb{R}}(\tilde{b}|_{\widetilde{X}})_{\tilde{x}} \le 2 \min\{\dim X, g\}$ for any $\tilde{x} \in \widetilde{X}$ with $\bu(\tilde{x}) \in X^{\mathrm{sm}}(\C)$.  

\begin{prop}\label{PropDegLocus}
Denote for simplicity by $d = \dim X$. For any integer $l \in \{1,\ldots,d\}$, let
\[
\widetilde{X}_{<2l} = \{ \widetilde{x} \in \widetilde{X} : \mathrm{rank}_{\mathbb{R}}(\tilde{b}|_{\widetilde{X}})_{\widetilde{x}} < 2l, ~ \mathbf{u}(\widetilde{x}) \in X^{\mathrm{sm}}(\mathbb{C}) \}.
\]
Then $X^{\mathrm{deg}}(l-d) \cap X^{\mathrm{sm}}(\mathbb{C}) \subseteq \mathbf{u}(\widetilde{X}_{<2l})% \subseteq X^{\mathrm{deg}}(l-d)
$, where $X^{\mathrm{deg}}(l-d)$ is defined in Definition~\ref{DefnDegeneracyLocus}. 

Conversely, if $\mathrm{rank}_{\mathbb{R}}(\tilde{b}|_{\widetilde{X}})_{\tilde{x}} < 2l$ for all $\widetilde{x} \in \widetilde{X}$ with $\mathbf{u}(\widetilde{x}) \in X^{\mathrm{sm}}(\mathbb{C})$, then $X^{\mathrm{deg}}(l-d)$ is Zariski dense in $X$.
%$X$ is degenerate as defined in Definition~\ref{DefnDegeneracyLocus}.
\end{prop}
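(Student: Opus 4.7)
The plan is to prove the two inclusions separately, organized around the fact that the fibers of $\tilde{b}$ are the horizontal slices $\{r\}\times \mathfrak{H}_g^+$, which are algebraic subvarieties of $\cX_{2g,\mathrm{a}}^+$ mapped isomorphically onto $\mathfrak{H}_g^+$ by $\tilde{\pi}$. Using Corollary~\ref{CorsgBiZar} I will rephrase ``$Y \subseteq X^{\mathrm{deg}}(l-d)$'' as the bi-algebraic dimension inequality $\dim Y^{\mathrm{biZar}} - \dim \pi(Y)^{\mathrm{biZar}} < \dim Y + (l-d)$, and then bridge this with the rank of the Betti map.

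For the inclusion $\mathbf{u}(\widetilde{X}_{<2l})\subseteq X^{\mathrm{deg}}(l-d)$, given $\tilde{x}$ with $\mathrm{rank}_{\mathbb{R}}(\tilde{b}|_{\widetilde{X}})_{\tilde{x}}<2l$ I will extract, by upper semicontinuity of fiber dimension (combined with the evenness of real dimensions of complex analytic sets, which upgrades ``fiber real dim $\geq 2d-2l+1$'' to $\geq 2d-2l+2$), a complex analytic irreducible component $\widetilde{Y}$ of $\widetilde{X}\cap \tilde{b}^{-1}(\tilde{b}(\tilde{x}))$ through $\tilde{x}$ with $\dim_{\mathbb{C}} \widetilde{Y} \geq d-l+1$. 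Setting $Y:=\mathbf{u}(\widetilde{Y})^{\mathrm{Zar}}$ and $\widetilde{Y}_G:=\tilde{\pi}(\widetilde{Y})$, the weak Ax-Schanuel (Theorem~\ref{ThmWAS}) supplies the upper bound $\dim Y^{\mathrm{biZar}} \leq \dim \widetilde{Y}^{\mathrm{Zar}}+(\dim Y - \dim \widetilde{Y})$, while the elementary observation that any bi-algebraic subvariety of $\mathbb{A}_g$ containing $\pi(Y)$ has an algebraic lift through $\mathbf{u}_G$ containing $\widetilde{Y}_G^{\mathrm{Zar}}$ supplies the lower bound $\dim \pi(Y)^{\mathrm{biZar}} \geq \dim \widetilde{Y}_G^{\mathrm{Zar}}$. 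Since $\widetilde{Y}^{\mathrm{Zar}}$ sits inside the horizontal slice through $\tilde{x}$, $\tilde{\pi}$ restricts to an isomorphism there and forces $\dim \widetilde{Y}^{\mathrm{Zar}} = \dim \widetilde{Y}_G^{\mathrm{Zar}}$; subtracting then gives $\dim Y^{\mathrm{biZar}} - \dim \pi(Y)^{\mathrm{biZar}} \leq \dim Y - \dim \widetilde{Y} < \dim Y + (l-d)$. The main technical content is precisely this pairing of the weak Ax-Schanuel upper bound against the elementary containment lower bound, both anchored to the algebraicity of the horizontal slice.

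For the reverse inclusion $X^{\mathrm{deg}}(l-d)\cap X^{\mathrm{sm}}(\mathbb{C}) \subseteq \mathbf{u}(\widetilde{X}_{<2l})$, I will choose a positive-dimensional $Y\ni x$ in $X$ with $g_M:=\dim\langle Y\rangle_{\mathrm{sg}}-\dim \pi(Y) < \dim Y+(l-d)$. The generically special $M:=\langle Y\rangle_{\mathrm{sg}}$ is (up to finite cover) a translate of an abelian subscheme of relative dimension $g_M$ by a torsion and a constant section, so for any algebraic component $\widetilde{M}$ of $\mathbf{u}^{-1}(M)$ the Betti image of $\widetilde{M}$ lies in a translate of a fixed $2g_M$-dimensional real subspace of $\mathbb{R}^{2g}$; in particular $\mathrm{rank}_{\mathbb{R}}(\tilde{b}|_{\widetilde{M}}) \leq 2g_M$ at smooth points. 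Choosing a lift $\widetilde{Y}\subseteq \widetilde{X}$ of $Y$ with $\widetilde{Y}\subseteq \widetilde{M}$, at any point $\tilde{y}'$ smooth in $\widetilde{X}$, $\widetilde{Y}$, and $\widetilde{M}$ the fiber of $\tilde{b}|_{\widetilde{Y}}$ through $\tilde{y}'$ is complex analytic of dimension $\geq \dim Y - g_M$ and is contained in the fiber of $\tilde{b}|_{\widetilde{X}}$, forcing $\mathrm{rank}_{\mathbb{R}}(\tilde{b}|_{\widetilde{X}})_{\tilde{y}'} \leq 2(d-\dim Y+g_M)<2l$. Lower semicontinuity of rank makes $\{\mathrm{rank}<2l\}$ closed in $\widetilde{X}^{\mathrm{sm}}$, and the hypothesis $x\in X^{\mathrm{sm}}$ ensures $\tilde{x}\in \widetilde{X}^{\mathrm{sm}}$; the bound on a dense open of $\widetilde{Y}\cap \widetilde{X}^{\mathrm{sm}}$ then propagates to $\tilde{x}$, giving $\tilde{x}\in \widetilde{X}_{<2l}$. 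The ``in particular'' clause is immediate from $\mathbf{u}(\widetilde{X}_{<2l})\subseteq X^{\mathrm{deg}}(l-d)$: if every smooth lift has rank $<2l$ then $\mathbf{u}(\widetilde{X}_{<2l})\supseteq X^{\mathrm{sm}}(\mathbb{C})$, which is Zariski dense in $X$, forcing $X^{\mathrm{deg}}(l-d)$ to be Zariski dense.
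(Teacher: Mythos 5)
Your proposal follows the paper's two-inclusion strategy, with two local deviations that are both legitimate. For $\mathbf{u}(\widetilde{X}_{<2l})\subseteq X^{\mathrm{deg}}(l-d)$, the paper applies the mixed weak Ax--Schanuel (Theorem~\ref{ThmWAS}) to $\{r\}\times\widetilde{W}$ \emph{and} the pure weak Ax--Schanuel in its equality form to $\widetilde{W}$, then subtracts; you keep only the mixed application and replace the pure one by the elementary bound $\dim\pi(Y)^{\mathrm{biZar}}\ge\dim\widetilde{Y}_G^{\mathrm{Zar}}$ (with $\widetilde{Y}_G=\widetilde{\pi}(\widetilde{Y})$, using that the component of $\mathbf{u}_G^{-1}(\pi(Y)^{\mathrm{biZar}})$ containing it is algebraic), together with $\dim\widetilde{Y}^{\mathrm{Zar}}=\dim\widetilde{Y}_G^{\mathrm{Zar}}$, which rests on the algebraicity of the slice $\{r\}\times\mathfrak{H}_g^+$ exactly as the paper's identity $(\{r\}\times\widetilde{W})^{\mathrm{Zar}}=\{r\}\times\widetilde{W}^{\mathrm{Zar}}$ does; since the paper's pure Ax--Schanuel step only serves to produce $\dim\widetilde{W}^{\mathrm{Zar}}\le\dim\pi(Y)^{\mathrm{biZar}}$, your route proves the same inequality with less machinery, and then Corollary~\ref{CorsgBiZar} finishes as in the paper. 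In the reverse inclusion you bound the rank at a dense set of good points of $\widetilde{Y}$ (the fiber bound inside $\widetilde{M}$ is indeed valid at $\widetilde{M}$-smooth points, e.g.\ by intersecting $\widetilde{Y}$ with the Betti leaves of $\widetilde{M}$, all of codimension $g_M$ there) and propagate to $\tilde{x}$ by lower semicontinuity of the rank, whereas the paper compares ranks at $\tilde{x}$ itself via $\mathrm{rank}(\tilde{b}|_{\widetilde{X}})_{\tilde{x}}\le\mathrm{rank}(\tilde{b}|_{\widetilde{Y}})_{\tilde{x}}+2(\dim X-\dim Y)$; your version is sound and in fact a bit more careful where $Y$ or $\langle Y\rangle_{\mathrm{sg}}$ is singular.

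The one step your justification does not establish is the opening move of the inclusion $\mathbf{u}(\widetilde{X}_{<2l})\subseteq X^{\mathrm{deg}}(l-d)$: from $\mathrm{rank}_{\mathbb{R}}(\tilde{b}|_{\widetilde{X}})_{\tilde{x}}<2l$ at the single point $\tilde{x}$ you extract a component of $\tilde{b}^{-1}(\tilde{b}(\tilde{x}))\cap\widetilde{X}$ through $\tilde{x}$ of complex dimension $\ge d-l+1$, citing upper semicontinuity of fiber dimension plus parity. For a map that is only real analytic, the inequality ``fiber dimension at $\tilde{x}$ $\ge$ dimension of the source minus the rank at $\tilde{x}$'' is false: the rank is lower semicontinuous, so it can drop at an isolated point without the fiber through that point growing. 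Nothing in the pointwise hypothesis excludes $\widetilde{X}$ being tangent to the Betti leaf at $\tilde{x}$ while meeting it only at $\tilde{x}$ locally; then the rank drops there but the fiber through $\tilde{x}$ is a single point, and semicontinuity of fiber dimension (to the extent it applies) goes the wrong way, while parity is moot. What the constant rank theorem gives is the fiber bound at points where the rank is \emph{locally maximal}, so the step becomes correct if the hypothesis $\mathrm{rank}<2l$ is available on a whole neighbourhood of $\tilde{x}$ (i.e.\ reading $\widetilde{X}_{<2l}$ via the local generic rank), or with an additional argument exploiting the complex analyticity of the Betti fibers. To be fair, this is exactly the inequality the paper itself asserts in one line ($\dim_{\mathbb{R}}(\tilde{b}^{-1}(r)\cap\widetilde{X})>2(d-l)$), so your architecture matches the written proof; but the reason you offer for it is not a proof, and it is the point to repair.
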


%\subsection{Proof of ~\ref{PropDegLocus}}% and Theorem~\ref{PropDegLocusCase2}}
\begin{proof}
%The ``In particular'' part of the proposition follows directly from the main statement by letting $x$ run over all points in $X^{\mathrm{sm}}(\mathbb{C})$.

Let us prove $X^{\mathrm{deg}}(l-d) \cap X^{\mathrm{sm}}(\mathbb{C}) \subseteq \mathbf{u}(\widetilde{X}_{<2l})$. Suppose not, then there exists some $\widetilde{x} \in \widetilde{X}$ such that $\mathbf{u}(\widetilde{x}) \in X^{\mathrm{deg}} \cap X^{\mathrm{sm}}(\mathbb{C})$ and $\mathrm{rank}_{\mathbb{R}}(\tilde{b}|_{\widetilde{X}})_{\widetilde{x}} = 2 l$. By definition of $X^{\mathrm{deg}}(l-d)$, the point $\mathbf{u}(\widetilde{x})$ lies in a subvariety $Y$ of $X$ such that $\dim \langle Y \rangle_{\mathrm{sg}} - \dim \pi(Y) < \dim Y + (l-d)$, 
%$\codim_{\langle Y \rangle_{\mathrm{sg}}} (Y) < \dim \pi(Y) + t$, 
where $\langle Y \rangle_{\mathrm{sg}}$ is defined above Definition~\ref{DefnDegeneracyLocus}.

Let $\widetilde{Y}$ be a complex analytic irreducible component of $\mathbf{u}^{-1}(Y)$ with $\widetilde{x} \in \widetilde{Y} \subseteq \widetilde{X}$. Observe that $\mathrm{rank}_{\mathbb{R}}(\tilde{b}|_{\widetilde{X}})_{\widetilde{x}}  \le \mathrm{rank}_{\mathbb{R}}(\tilde{b}|_{\widetilde{Y}})_{\widetilde{x}} + 2 (\dim X - \dim Y)$. So $2l - 2(d-\dim Y) = \mathrm{rank}_{\mathbb{R}}(\tilde{b}|_{\widetilde{X}})_{\widetilde{x}} - 2(\dim X - \dim Y) \le \mathrm{rank}_{\mathbb{R}}(\tilde{b}|_{\widetilde{Y}})_{\widetilde{x}}$. So for $\widetilde{\langle Y \rangle_{\mathrm{sg}}}$ a complex analytic irreducible component of $\mathbf{u}^{-1}(\langle Y \rangle_{\mathrm{sg}})$ which contains $\widetilde{Y}$, we have 
\begin{align*}
\mathrm{rank}_{\mathbb{R}}(\tilde{b}|_{\widetilde{\langle Y \rangle_{\mathrm{sg}}}})_{\tilde{x}} & = 2(\dim \langle Y \rangle_{\mathrm{sg}} - \dim \pi(Y)) \\
& < 2l - 2(d-\dim Y) \qquad \text{ by choice of }Y \\
%& = \mathrm{rank}_{\mathbb{R}}(\tilde{b}|_{\widetilde{X}})_{\widetilde{x}} - 2(\dim X - \dim Y) \\
& \le \mathrm{rank}_{\mathbb{R}}(\tilde{b}|_{\widetilde{Y}})_{\widetilde{x}}
\end{align*}
But this cannot happen as $\tilde{Y} \subseteq \tilde{\langle Y \rangle_{\mathrm{sg}}}$.

%$\mathrm{rank}_{\mathbb{R}}(\tilde{b}|_{\widetilde{X}})_{\widetilde{x}}  \le \mathrm{rank}_{\mathbb{R}}(\tilde{b}|_{\widetilde{Y}})_{\widetilde{x}} + 2 (\dim X - \dim Y)$. Since $\mathrm{rank}_{\mathbb{R}}(\tilde{b}|_{\widetilde{X}})_{\widetilde{x}} = 2l$ by choice of $\widetilde{x}$, we have $\mathrm{rank}_{\mathbb{R}}(\tilde{b}|_{\widetilde{Y}})_{\widetilde{x}} \ge 2 (l-d +\dim Y)$. However by definition of $\langle Y \rangle_{\mathrm{sg}}$ and $\tilde{b}$, we have $\mathrm{rank}_{\mathbb{R}}(\tilde{b}|_{\widetilde{\langle Y \rangle_{\mathrm{sg}}}})_{\tilde{x}} = 2(\dim \langle Y \rangle_{\mathrm{sg}} - \dim \pi(Y))$. Since $\widetilde{Y} \subseteq \widetilde{\langle Y \rangle_{\mathrm{sg}}}$, we then have $2 (l-d + \dim Y) \le 2(\dim \langle Y \rangle_{\mathrm{sg}} - \dim \pi(Y))$. But then $\dim \langle Y \rangle_{\mathrm{sg}} - \dim \pi(Y) \ge \dim Y + (l-d)$, which contradicts our choice of $Y$. Thus we get a contradiction.

Conversely, assume $2l':=\max_{\widetilde{x} \in \widetilde{X},~ \mathbf{u}(\widetilde{x}) \in X^{\mathrm{sm}}(\C)}\mathrm{rank}_{\mathbb{R}}(\tilde{b}|_{\widetilde{X}})_{\tilde{x}} < 2l$. There exists a non-empty open (in the usual topology) subset $\widetilde{U}$ of $\widetilde{X}$ on which $\mathrm{rank}_{\mathbb{R}}(\tilde{b}|_{\widetilde{X}})_{\tilde{x}} = 2l'$ for all $\tilde{x} \in \widetilde{U}$.
By \cite[Appendix~II, Corollary~7F]{Whitney}, each fiber of $\widetilde{b}|_{\widetilde{U}}$ has (real-)dimension $2d - 2l' > 2(d-l)$. 

Let $\widetilde{x} \in \widetilde{U}$ and set $r = \widetilde{b}(\widetilde{x})$, then
\begin{equation}\label{EqBettiRankGivesSubset}
(\dim_{\mathbb{R}})_{\widetilde{x}} (\tilde{b}^{-1}(r) \cap \widetilde{X})  > 2(d - l).
\end{equation}

%Next we prove the other inclusion $\mathbf{u}(\widetilde{X}_{<2l}) \subseteq X^{\mathrm{deg}}(t)$. Take any $\widetilde{x} \in \widetilde{X}_{<2l}$. Denote by $r = \tilde{b}(\widetilde{x}) \in \mathbb{R}^{2g}$. By assumption of the proposition, we have
%\begin{equation}\label{EqBettiRankGivesSubset}
%\dim_{\mathbb{R}} (\tilde{b}^{-1}(r) \cap \widetilde{X})  > 2(d - l).
%\end{equation}
In the rest of the proof, we identify $\cX_{2g,\mathrm{a}}^+$ as the semi-algebraic space $\mathbb{R}^{2g} \times \mathfrak{H}_g^+$ with the complex structure defined by \eqref{EqComplexStrOfX2g}. In particular $\tilde{b}^{-1}(r) = \{r\} \times \mathfrak{H}_g^+$. 
Property (ii) of the Betti map (below \eqref{EqUnivBettiMap}) implies that $\tilde{b}^{-1}(r) \cap \widetilde{X}$ is complex analytic. Then by \eqref{EqBettiRankGivesSubset}, there exists a complex analytic irreducible subset $\widetilde{W}$ in $\mathfrak{H}_g^+$ of dimension $\ge d-l + 1$ such that
\[
\widetilde{x} \in \{r\} \times \widetilde{W} \subseteq \widetilde{X}.
\]
Now $\mathbf{u}(\widetilde{U})$ is Zariski dense in $X$ because it contains a non-empty open subset (in the usual topology) of $X^{\mathrm{sm,an}}$. So it suffices to prove the following assertion: $Y = \left(\mathbf{u}(\{r\} \times \widetilde{W})\right)^{\mathrm{Zar}}$ satisfies
\begin{equation}\label{EqGoalBettiDegenerate}
\dim \langle Y \rangle_{\mathrm{sg}} - \dim \pi(Y) < \dim Y + (l-d).
\end{equation}

Apply weak Ax-Schanuel for $\mathfrak{A}_g$, namely Theorem~\ref{ThmWAS}, to $\{r\} \times \widetilde{W}$. Then we get
\begin{equation}\label{EqWASBetti}
\dim (\{r\} \times \widetilde{W})^{\mathrm{Zar}} + \dim Y \ge \dim (\{r\} \times \widetilde{W}) + \dim Y^{\mathrm{biZar}}.
\end{equation}

On the other hand we have
\begin{equation}\label{EqLongInclusionWY}
\dim \tilde{W}^{\mathrm{Zar}} \le \dim \tilde{W}^{\mathrm{biZar}} 
= \dim (\bu_G(\tilde{W}))^{\mathrm{biZar}} \le \dim \pi(Y)^{\mathrm{biZar}},
\end{equation}
where the last inequality holds because $\bu_G(\tilde{W})\subseteq \pi(Y)$ by definition of $Y$.

Let us temporarily assume $(\{r\} \times \widetilde{W})^{\mathrm{Zar}} = \{r\} \times \widetilde{W}^{\mathrm{Zar}}$ and finish the proof. Then from \eqref{EqWASBetti} and \eqref{EqLongInclusionWY} we get
\[
\dim \pi(Y)^{\mathrm{biZar}} + \dim Y \ge \dim \tilde{W} + \dim Y^{\mathrm{biZar}}.
\]
Thus
\[
\dim \langle Y \rangle_{\mathrm{sg}} - \dim \pi(Y) = \dim Y^{\mathrm{biZar}} - \dim \pi(Y)^{\mathrm{biZar}} \le \dim Y - \dim \tilde{W},
\]
where the first equality follows from Corollary~\ref{CorsgBiZar}. Therefore \eqref{EqGoalBettiDegenerate} holds since $\dim \tilde{W} \ge d-l+1$.

It remains to prove $(\{r\} \times \widetilde{W})^{\mathrm{Zar}} = \{r\} \times \widetilde{W}^{\mathrm{Zar}}$. First note that $\{r\} \times \widetilde{W}^{\mathrm{Zar}}$ is semi-algebraic and complex analytic. Then it is a general fact that $\{r\} \times \widetilde{W}^{\mathrm{Zar}}$ is algebraic in $\cX_{2g,\mathrm{a}}^+$; see \cite[(the proof of) Lemma~B.1]{KlinglerThe-Hyperbolic-}. Thus $(\{r\} \times \widetilde{W})^{\mathrm{Zar}} \subseteq \{r\} \times \widetilde{W}^{\mathrm{Zar}}$. Remark~\ref{RemarkBiAlgSystem} says that the algebraic structures on $\cX_{2g,\mathrm{a}}^+$ and $\mathfrak{H}_g^+$ are compatible under $\widetilde{\pi}$, so $\widetilde{\pi}|_{(\{r\} \times \widetilde{W})^{\mathrm{Zar}}} \colon (\{r\} \times \widetilde{W})^{\mathrm{Zar}} \rightarrow \widetilde{W}^{\mathrm{Zar}}$ is dominant. Now we can conclude.
\end{proof}

%% Section 7
\section{Zariski closeness of the $t$-th degeneracy locus}\label{SectionDegeneracyLocusClosed}
The goal of this section is to prove Theorem~\ref{ThmZarClosedXdegIntro} for $X \subseteq \mathfrak{A}_g$. Let $X$ be an irreducible subvariety of $\mathfrak{A}_g$. 
Through the whole section we fix a $t \in \Z$. Let $X^{\mathrm{deg}}(t)$ be the $t$-th degeneracy locus of $X$ defined in Definition~\ref{DefnDegeneracyLocus}.
\begin{thm}\label{ThmDegeneracyLocusZarClosed}
The subset $X^{\mathrm{deg}}(t)$ is Zariski closed in $X$.
\end{thm}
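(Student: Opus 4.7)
The plan is to leverage Corollary~\ref{CorsgBiZar} to translate the defining inequality of $X^{\mathrm{deg}}(t)$ into one about bi-algebraic closures, then stratify by the Shimura-theoretic type of those closures. Explicitly, a positive-dimensional irreducible $Y\subseteq X$ contributes to $X^{\mathrm{deg}}(t)$ if and only if
\[
\dim Y^{\mathrm{biZar}}-\dim \pi(Y)^{\mathrm{biZar}} < \dim Y + t.
\]
By Proposition~\ref{PropBiAlgAg} and Proposition~\ref{PropSpecialSubvarUnivAb}, $Y^{\mathrm{biZar}}$ is a translate by a torsion section of a special subvariety of $\mathfrak{A}_g$, and is thus classified (up to a $P_{2g,\mathrm{a}}(\Q)$-conjugacy class) by a connected mixed Shimura subdatum $(Q,\cY^+)$ of $(P_{2g,\mathrm{a}},\cX_{2g,\mathrm{a}}^+)$, whose unipotent relative dimension $g_Q$ equals $\dim Y^{\mathrm{biZar}}-\dim \pi(Y)^{\mathrm{biZar}}$.

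Write $X^{\mathrm{deg}}(t)=\bigcup_{(Q,\cY^+)} X_Q^{\mathrm{deg}}(t)$, where $X_Q^{\mathrm{deg}}(t)$ is the union of those positive-dimensional irreducible $Y\subseteq X$ whose bi-algebraic closure is of type $(Q,\cY^+)$ and satisfies $g_Q<\dim Y+t$. The first reduction is to show that only finitely many types $(Q,\cY^+)$ can contribute. This uses the twin bounds: the pure part must be contained in $B^{\mathrm{biZar}}$, which bounds $G_Q$; and the inequality $g_Q<\dim X+t$ bounds the unipotent dimension. Standard finiteness in the theory of mixed Shimura varieties (analogous to the finiteness of conjugacy classes of reductive subgroups with bounded invariants in $\mathrm{GSp}_{2g}$) then yields finitely many types.

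The main step is to prove that each $X_Q^{\mathrm{deg}}(t)$ is Zariski closed. Once $(Q,\cY^+)$ is fixed, the bi-algebraic subvarieties of $\mathfrak{A}_g$ of this type form a $P_{2g,\mathrm{a}}(\Q)$-orbit; using the Levi decomposition $Q = V_Q\rtimes G_Q$ from $\mathsection$\ref{SubsectionGeomQuotByNormalSubgp}, these orbits descend to a definable family (in $\R_{\mathrm{an},\exp}$) of torsion translates of a universal abelian subscheme over a fixed special subvariety $M_{G_Q}$ of $\A_g$. I would consider the incidence subset of $\widetilde{X}$ consisting of points lying on some translate of this family, intersected with a fundamental domain, and then descend to $X$. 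Because torsion sections over $M_{G_Q}$ fit into an algebraic family of finite type, the descent $X_Q^{\mathrm{deg}}(t)$ admits an algebraic parameterization, and Noetherianity together with the explicit group-theoretic description guarantee that taking the Zariski closure only adds generic members whose bi-algebraic closure is again a torsion translate of type $(Q,\cY^+)$ satisfying the same dimension inequality; in particular the closure stays in $X_Q^{\mathrm{deg}}(t)$.

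The key obstacle is the third step: promoting the a~priori countable union $X_Q^{\mathrm{deg}}(t)=\bigcup_{F}(\text{parts of } X\cap F)$ over all torsion translates $F$ to a genuine Zariski closed subset. The resolution I have in mind is to realize these translates as the closed points (or an $\R_{\mathrm{an},\exp}$-definable subset) of a single algebraic variety parameterizing torsion sections of the universal abelian subscheme over $M_{G_Q}$, thereby replacing the countable union by a proper morphism from a finite-type scheme whose image is automatically Zariski closed. The Shimura-theoretic formalism of $\mathsection$\ref{SubsectionMSV}--$\mathsection$\ref{SubsectionGeomQuotByNormalSubgp} and weak Ax-Schanuel (Theorem~\ref{ThmWAS}) should suffice to check that the generic point of each irreducible component of this image still verifies the strict inequality $g_Q<\dim Y+t$, completing the argument.
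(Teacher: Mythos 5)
There is a genuine gap, and it comes in two places. First, your structural description of the bi-algebraic closures is wrong: by Proposition~\ref{PropBiAlgAg} and Corollary~\ref{CorsgBiZar}, $Y^{\mathrm{biZar}}$ is a \emph{weakly special} subvariety of $\mathfrak{A}_g$ --- the translate of an abelian subscheme by a torsion section \emph{and a constant section}, sitting over a base which is only bi-algebraic in $\A_g$ (a fiber of a pure Shimura quotient), not a torsion translate of a special subvariety over a special base $M_{G_Q}$. For a fixed group-theoretic type $((Q,\cY^+),N)$ the relevant family is therefore the \emph{continuous} family of fibers of the quotient Shimura morphism $p_N\colon M\to M'$ of $\mathsection$\ref{SubsectionGeomQuotByNormalSubgp}, parameterized by the finite-type variety $M'$; it is not a family of torsion translates. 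Your proposed resolution of the countable-union problem --- ``a single algebraic variety parameterizing torsion sections of the universal abelian subscheme over $M_{G_Q}$'' --- fails on both counts: torsion sections of an abelian scheme form a countable union of finite families (one per order), not a finite-type family, and even granting such a parameterization it would miss the constant-section and base-point deformations, so the union you form would not cover $X^{\mathrm{deg}}(t)$.

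Second, the reduction to finitely many types is not the ``standard finiteness of conjugacy classes with bounded invariants'' you invoke. What is actually needed is that the bi-algebraic closures of the \emph{maximal} subvarieties $Y$ entering $X^{\mathrm{deg}}(t)$ fall into finitely many families of the form $\mathbf{u}(N(\R)^+\tilde y)$; this is precisely the finiteness theorem \`a la Bogomolov, Theorem~\ref{ThmFinitenessALaBogomolov} (\cite[Theorem~1.4]{GaoAxSchanuel}), and it applies only after one observes that such maximal $Y$ are weakly optimal (Lemma~\ref{LemmaBadLocusWeaklyOptimal}, via the defect reformulation of Lemma~\ref{LemmaEquivOfTwoInequality}). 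Your proposal never introduces weak optimality and never cites this input; the bounds ``$g_Q<\dim X+t$'' and ``$G_Q$ constrained by $B^{\mathrm{biZar}}$'' do not by themselves turn the union into an algebraic set. Once the finiteness is available, the closedness mechanism is much simpler than your definable/proper-image argument: for each $((Q,\cY^+),N)$ one sets $h=g_N+1-t$ and takes $E_h=\{x\in X: (p_N|_{X\cap M})^{-1}(p_N(x))\text{ has a component of dimension}\ge h\text{ through }x\}$, which is Zariski closed by upper semicontinuity of fiber dimension; that each maximal $Y$ lies in some $E_h$ and that $E_h\subseteq X^{\mathrm{deg}}(t)$ are elementary dimension counts (Proposition~\ref{PropAuxForZarClosedness}), requiring no further Ax--Schanuel input at that stage, contrary to the last step of your plan.
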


%Combined with Proposition~\ref{PropDegLocus}, this theorem proves part (ii) of Theorem~\ref{ThmCriterionDegIntro} for $X \subseteq \mathfrak{A}_g$.

Our proof of Theorem~\ref{ThmDegeneracyLocusZarClosed} is inspired by Daw-Ren's work \cite[$\mathsection$7]{DawRenAppOfAS} on the anomalous subvarieties in a pure Shimura variety.

\subsection{Weakly optimal subvariety}
To prove Theorem~\ref{ThmDegeneracyLocusZarClosed}, we need the following definition of \textit{weakly optimal} subvarieties. The notion was first introduced by Habegger-Pila \cite{HabeggerO-minimality-an} to study the Zilber-Pink conjecture for abelian varieties and product of modular curves.
\begin{defn}
\begin{itemize}
\item[(i)] For any irreducible subvariety $Z$ of $\mathfrak{A}_g$, define the weakly defect to be $\delta_{\mathrm{ws}}(Z) = \dim Z^{\mathrm{biZar}} - \dim Z$.
\item[(ii)] A closed irreducible subvariety $Z$ of $X$ is said to be \textbf{weakly optimal} if the following condition holds: $Z \subsetneq Z' \subseteq X$ with $Z'$ irreducible closed in $X$ $\Rightarrow$ $\delta_{\mathrm{ws}}(Z') > \delta_{\mathrm{ws}}(Z)$.
\end{itemize}
\end{defn}

Weakly optimal subvarieties of $X$ are closely related to $X^{\mathrm{deg}}(t)$ by the following lemmas.
\begin{lemma}\label{LemmaEquivOfTwoInequality}
$\dim \langle Z \rangle_{\mathrm{sg}} - \dim \pi(Z) < \dim Z + t \Leftrightarrow \delta_{\mathrm{ws}}(Z) <  \dim \pi(Z)^{\mathrm{biZar}} + t$.
\end{lemma}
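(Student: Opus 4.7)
The plan is very short, since this is essentially a rearrangement using Corollary~\ref{CorsgBiZar}. First, I would apply Corollary~\ref{CorsgBiZar} to the irreducible subvariety $Z$, which gives the identity
\[
\dim \langle Z \rangle_{\mathrm{sg}} - \dim \pi(Z) = \dim Z^{\mathrm{biZar}} - \dim \pi(Z)^{\mathrm{biZar}}.
\]
Substituting this into the inequality on the left-hand side of the claimed equivalence, the condition $\dim \langle Z \rangle_{\mathrm{sg}} - \dim \pi(Z) < \dim Z + t$ becomes
\[
\dim Z^{\mathrm{biZar}} - \dim \pi(Z)^{\mathrm{biZar}} < \dim Z + t.
\]

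Second, I would simply move $\dim Z$ to the left and $\dim \pi(Z)^{\mathrm{biZar}}$ to the right, obtaining
\[
\dim Z^{\mathrm{biZar}} - \dim Z < \dim \pi(Z)^{\mathrm{biZar}} + t,
\]
which by definition of $\delta_{\mathrm{ws}}(Z) = \dim Z^{\mathrm{biZar}} - \dim Z$ is exactly the right-hand side of the equivalence. Since each step is a reversible equality or rearrangement, the equivalence follows in both directions. There is no serious obstacle here: the only real content is Corollary~\ref{CorsgBiZar}, which has already been established, and otherwise the lemma is bookkeeping between the two ways of measuring the defect of $Z$ (via $\langle Z \rangle_{\mathrm{sg}}$ relative to $\pi(Z)$, versus via $Z^{\mathrm{biZar}}$ relative to $Z$ itself).
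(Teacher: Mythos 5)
Your proof is correct and is essentially the paper's own argument: both rest entirely on Corollary~\ref{CorsgBiZar} (its ``In particular'' identity $\dim \langle Z \rangle_{\mathrm{sg}} - \dim \pi(Z) = \dim Z^{\mathrm{biZar}} - \dim \pi(Z)^{\mathrm{biZar}}$) followed by a reversible rearrangement of terms. No gaps; nothing more is needed.
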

\begin{proof}
The condition $\dim \langle Z \rangle_{\mathrm{sg}} - \dim \pi(Z) < \dim Z + t$ can be rewritten to be
\[
((\dim \langle Z \rangle_{\mathrm{sg}} - \dim \pi(Z)) + \dim \pi(Z)^{\mathrm{biZar}} ) - \dim Z <  \dim \pi(Z)^{\mathrm{biZar}} + t.
\]
By Corollary~\ref{CorsgBiZar}, we have $\dim Z^{\mathrm{biZar}} = (\dim \langle Z \rangle_{\mathrm{sg}} - \dim \pi(Z)) + \dim \pi(Z)^{\mathrm{biZar}}$. Hence the inequality above becomes $\delta_{\mathrm{ws}}(Z) <  \dim \pi(Z)^{\mathrm{biZar}} + t$.
\end{proof}

\begin{lemma}\label{LemmaBadLocusWeaklyOptimal}
Assume a closed irreducible subvariety $Z \subseteq X$ satisfies $\dim \langle Z \rangle_{\mathrm{sg}} - \dim \pi(Z) < \dim Z + t$ and is maximal for this property. Then $Z$ is weakly optimal.
\end{lemma}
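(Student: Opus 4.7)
The plan is to translate the hypothesis on $\codim_{\langle Z \rangle_{\mathrm{sg}}}(Z)$ into a statement about the weakly defect $\delta_{\mathrm{ws}}$ using Lemma~\ref{LemmaEquivOfTwoInequality}, and then argue by contradiction: if $Z$ fails to be weakly optimal, a strictly larger subvariety $Z' \supsetneq Z$ in $X$ with $\delta_{\mathrm{ws}}(Z') \le \delta_{\mathrm{ws}}(Z)$ will itself satisfy the hypothesis of the lemma, contradicting the maximality of $Z$.

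Concretely, first I would rewrite the hypothesis $\codim_{\langle Z \rangle_{\mathrm{sg}}}(Z) < \dim \pi(Z) + t$ as $\dim \langle Z \rangle_{\mathrm{sg}} - \dim \pi(Z) < \dim Z + t$ (using that $Z \subseteq \langle Z \rangle_{\mathrm{sg}}$ is irreducible, so codimension equals the difference of dimensions). Then Lemma~\ref{LemmaEquivOfTwoInequality} immediately gives
\[
\delta_{\mathrm{ws}}(Z) < \dim \pi(Z)^{\mathrm{biZar}} + t.
\]

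Next, assume for contradiction that $Z$ is not weakly optimal, so there exists an irreducible closed $Z'$ with $Z \subsetneq Z' \subseteq X$ and $\delta_{\mathrm{ws}}(Z') \le \delta_{\mathrm{ws}}(Z)$. Since $\pi(Z) \subseteq \pi(Z')$, the bi-Zariski closures satisfy $\pi(Z)^{\mathrm{biZar}} \subseteq \pi(Z')^{\mathrm{biZar}}$ (because the bi-algebraic closure of a subset is monotone), whence $\dim \pi(Z)^{\mathrm{biZar}} \le \dim \pi(Z')^{\mathrm{biZar}}$. Chaining the inequalities,
\[
\delta_{\mathrm{ws}}(Z') \;\le\; \delta_{\mathrm{ws}}(Z) \;<\; \dim \pi(Z)^{\mathrm{biZar}} + t \;\le\; \dim \pi(Z')^{\mathrm{biZar}} + t.
\]
Applying Lemma~\ref{LemmaEquivOfTwoInequality} in the reverse direction to $Z'$, we deduce $\dim \langle Z' \rangle_{\mathrm{sg}} - \dim \pi(Z') < \dim Z' + t$, i.e.\ $\codim_{\langle Z' \rangle_{\mathrm{sg}}}(Z') < \dim \pi(Z') + t$. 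But $Z \subsetneq Z' \subseteq X$, contradicting the maximality of $Z$ among closed irreducible subvarieties of $X$ satisfying the displayed inequality. Hence $Z$ is weakly optimal.

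There is essentially no obstacle: the argument is a direct application of Lemma~\ref{LemmaEquivOfTwoInequality} in both directions, with the only structural input being the monotonicity $\pi(Z)^{\mathrm{biZar}} \subseteq \pi(Z')^{\mathrm{biZar}}$ under $Z \subseteq Z'$, which is formal from the definition of $(\cdot)^{\mathrm{biZar}}$ as the smallest bi-algebraic subvariety containing a given set.
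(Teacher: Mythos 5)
Your proof is correct and follows essentially the same route as the paper: translate the hypothesis via Lemma~\ref{LemmaEquivOfTwoInequality}, use the monotonicity $\dim \pi(Z)^{\mathrm{biZar}} \le \dim \pi(Z')^{\mathrm{biZar}}$ to chain the inequalities for any $Z' \supseteq Z$ with $\delta_{\mathrm{ws}}(Z') \le \delta_{\mathrm{ws}}(Z)$, apply Lemma~\ref{LemmaEquivOfTwoInequality} in the reverse direction to $Z'$, and invoke the maximality of $Z$. The only difference is presentational (you argue by contradiction, the paper shows directly $Z = Z'$), which is immaterial.
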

\begin{proof}
For any $Z \subseteq Z' \subseteq X$ with $Z'$ irreducible closed in $X$, if $\delta_{\mathrm{ws}}(Z') \le \delta_{\mathrm{ws}}(Z)$, then we have
\[
\delta_{\mathrm{ws}}(Z') \le \delta_{\mathrm{ws}}(Z) < \dim \pi(Z)^{\mathrm{biZar}} + t \le \dim \pi(Z')^{\mathrm{biZar}} + t
\]
where the second inequality follows from Lemma~\ref{LemmaEquivOfTwoInequality} (applied to $Z$). Applying Lemma~\ref{LemmaEquivOfTwoInequality} to $Z'$, we get $\dim \langle Z' \rangle_{\mathrm{sg}} - \dim \pi(Z') < \dim Z' + t$. The maximality of $Z$ then implies $Z = Z'$. So $Z$ is weakly optimal.
\end{proof}

Thus $X^{\mathrm{deg}}(t)$ is the union of some weakly optimal subvarieties. We will show that this union is finite. The key point is the following finiteness theorem concerning weakly optimal subvarieties \cite[Theorem~1.4]{GaoAxSchanuel}. See $\mathsection$\ref{SubsectionMSV} for notation.
\begin{thm}\label{ThmFinitenessALaBogomolov}
There exists a finite set $\Sigma$ consisting of elements of the form $((Q,\cY^+),N)$, where $(Q,\cY^+)$ is a mixed Shimura subdatum of Kuga type of $(P_{2g,\mathrm{a}}, \cX_{2g,\mathrm{a}}^+)$ and $N$ is a connected normal subgroup of $Q$ whose reductive part is semi-simple, such that the following property holds. If a closed irreducible subvariety $Z$ of $X$ is weakly optimal, then there exists $((Q,\cY^+),N) \in \Sigma$ such that $Z^{\mathrm{biZar}} = \mathbf{u}(N(\mathbb{R})^+\tilde{y})$ for some $\tilde{y} \in \cY^+$.
\end{thm}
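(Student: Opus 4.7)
The plan is to combine the classification of bi-algebraic subvarieties of $\mathfrak{A}_g$ as weakly special subvarieties with the weak Ax-Schanuel theorem (Theorem~\ref{ThmWAS}) and an o-minimal definability argument, in the spirit of Habegger-Pila in the abelian case and Daw-Ren in the pure Shimura case. The goal is to attach to each weakly optimal $Z \subseteq X$ an arithmetic datum $((Q_Z,\cY_Z^+), N_Z)$ and to show that only finitely many such data occur as $Z$ ranges over the (a priori infinite) family of weakly optimal subvarieties of $X$.

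First I would establish the structural dictionary. Every bi-algebraic subvariety of $\mathfrak{A}_g$ is a weakly special subvariety of the form $\mathbf{u}(N(\R)^+\tilde y)$ for some mixed Shimura subdatum of Kuga type $(Q,\cY^+)$ of $(P_{2g,\mathrm{a}},\cX_{2g,\mathrm{a}}^+)$ and some connected normal $\Q$-subgroup $N \lhd Q$ with Levi decomposition $N = V_N \rtimes G_N$, $V_N = V_{2g}\cap N$ and $G_N = N/V_N$ reductive; this follows by combining Proposition~\ref{PropBiAlgAg} with the geometric quotient construction of $\mathsection$\ref{SubsectionGeomQuotByNormalSubgp}. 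One can arrange that $G_N$ is semi-simple by replacing $N$ with $V_N$ extended by the derived subgroup of $G_N$, since the central torus of $G_N$ acts trivially on the weakly special orbit up to a shift of base point.

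Second, for a closed irreducible $Z \subseteq \mathfrak{A}_g$, I would identify the pair attached to $Z^{\mathrm{biZar}}$ via algebraic monodromy. Let $\widetilde Z$ be an analytic component of $\mathbf{u}^{-1}(Z)$, let $(Q_Z,\cY_Z^+)$ be the smallest mixed Shimura subdatum of Kuga type with $\widetilde Z \subseteq g\cdot\cY_Z^+$ for some $g$, and let $N_Z$ be the smallest connected $\Q$-subgroup of $Q_Z$ containing $V_{Q_Z}$ and the image of $\pi_1(Z^{\mathrm{sm}})$. Applying Theorem~\ref{ThmWAS} to $\widetilde Z$ forces $Z^{\mathrm{biZar}} = \mathbf{u}(N_Z(\R)^+\tilde y)$, and Deligne's Theorem of the Fixed Part \cite[Corollaire~4.1.2]{DeligneHodgeII} applied to the variation of Hodge structure induced on a desingularization of $Z$ shows that $N_Z \lhd Q_Z$ has semi-simple reductive part. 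This produces the required pair $((Q_Z,\cY_Z^+), N_Z)$ canonically from $Z$.

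The third and main step is the actual finiteness. For each candidate type $((Q,\cY^+), N)$, the weakly special subvarieties of the form $\mathbf{u}(N(\R)^+\tilde y)$ are parameterized by the lower-dimensional mixed Shimura variety $M'$ produced by the quotient construction of $\mathsection$\ref{SubsectionGeomQuotByNormalSubgp}, and the condition that such a fiber meets $X$ in a weakly optimal piece of a prescribed defect is definable in $\R_{\mathrm{an,exp}}$. Weak optimality forces the Ax-Schanuel inequality of Theorem~\ref{ThmWAS} to be close to equality along every proper enlargement $Z \subsetneq Z' \subseteq X$, which translates into a rigidity condition on the parameter $\tilde y$ singling out ``special'' points of the parameter space. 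The expected hard part is ruling out infinite towers of admissible $\Q$-subgroups $N \lhd Q \subseteq P_{2g,\mathrm{a}}$, since the collection of such subgroups is a priori infinite even modulo conjugation by the arithmetic lattice. To overcome this I would apply a Pila-Wilkie counting theorem to the definable family above to produce algebraicity of the locus of admissible parameters, and then combine this with Noetherianity of the Zariski topology on $X$ and an induction on $\dim Q - \dim N$ to collapse the infinite parameter space to finitely many types, yielding the finite set $\Sigma$. The delicate combinatorial bookkeeping of how the monodromy normality constrains the tower of possible $N$'s inside a given $Q$, together with the arithmeticity needed to invoke Pila-Wilkie, is where I expect the substantive work to lie.
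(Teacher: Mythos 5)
First, a point of comparison: the paper does not prove this statement at all. Theorem~\ref{ThmFinitenessALaBogomolov} is imported verbatim from \cite[Theorem~1.4]{GaoAxSchanuel} (the ``finiteness result \textit{\`a la Bogomolov}''), so there is no in-paper argument to measure your sketch against; what you are attempting is a reconstruction of the proof in the cited reference, which follows the Habegger--Pila/Daw--Ren strategy in the mixed Kuga setting.

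Measured against that, your sketch has a genuine gap exactly where you yourself locate ``the substantive work''. The finiteness of $\Sigma$ is not a Pila--Wilkie statement: there are no rational points of bounded height to count, and producing ``algebraicity of the locus of admissible parameters'' from point-counting has no content here. The actual mechanism is different: the candidate groups $N$ (connected, normal in some $Q$, with semi-simple reductive part) fall into \emph{finitely many conjugacy classes} of real subgroups (Richardson-type finiteness for semi-simple parts, plus the fact that $V_{2g}\cap N$ is a submodule), so the orbits $N(\R)^+\tilde y$ form finitely many \emph{definable families} once one restricts $\mathbf{u}$ to a fundamental set; one then applies the (mixed) Ax--Schanuel theorem to show that the locus of parameters whose orbit meets $X$ in a weakly optimal subvariety of prescribed defect is algebraic and, by a Noetherian induction on $X$, that only finitely many $\Gamma$-classes of data $((Q,\cY^+),N)$ actually occur, with $Q$ defined over $\Q$ forced by a monodromy argument. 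Your step~3 gestures at the induction but supplies neither the conjugacy-class finiteness nor the Ax--Schanuel-based algebraicity that replace the counting theorem, so the core of the proof is missing. A smaller but concrete error in step~2: defining $N_Z$ as the smallest connected $\Q$-subgroup containing $V_{Q_Z}$ \emph{and} the monodromy image is wrong --- for $Z$ a torsion section over a Hodge-generic curve $B$ this would force $Z^{\mathrm{biZar}}\supseteq \mathfrak{A}_g|_B$, whereas $Z^{\mathrm{biZar}}=Z$; the correct recipe takes $N_Z$ generated by the monodromy image (so that $V_{N_Z}=V_{2g}\cap N_Z$ may be a proper submodule of $V_{Q_Z}$, possibly zero), with the weakly special closure then identified via the mixed analogue of Andr\'e--Deligne, not via Theorem~\ref{ThmWAS}.
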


\subsection{An auxiliary proposition}
Let $X$ be as in Theorem~\ref{ThmDegeneracyLocusZarClosed}.

Let $Z$ be a positive dimensional closed irreducible subvariety $X$ such that $\dim \langle Z \rangle_{\mathrm{sg}} - \dim \pi(Z) < \dim Z + t$ and is maximal for this property. Then $Z$ is weakly optimal by Lemma~\ref{LemmaBadLocusWeaklyOptimal}. So Theorem~\ref{ThmFinitenessALaBogomolov} gives a finite set $\Sigma$ and a $((Q,\cY^+),N) \in \Sigma$ such that $Z^{\mathrm{biZar}} = \mathbf{u}(N(\mathbb{R})^+\tilde{y})$ for some $\tilde{y} \in \cY^+$. Recall that $N$ is a connected normal subgroup of $Q$ whose reductive part is semi-simple. Note that $N \not= 1$ since $Z$ has positive dimension.

%Let $\Sigma$ be the finite set in Theorem~\ref{ThmFinitenessALaBogomolov}. Take a positive dimensional closed irreducible subvariety $Z$ of $X$ such that $\dim \langle Z \rangle_{\mathrm{sg}} - \dim \pi(Z) < \dim Z + t$ and is maximal for this property. Then $Z$ is weakly optimal by Lemma~\ref{LemmaBadLocusWeaklyOptimal}, and hence Theorem~\ref{ThmFinitenessALaBogomolov} gives $((Q,\cY^+),N) \in \Sigma$ such that $Z^{\mathrm{biZar}} = \mathbf{u}(N(\mathbb{R})^+\tilde{y})$ for some $\tilde{y} \in \cY^+$. Recall that $N$ is a connected normal subgroup of $Q$ whose reductive part is semi-simple. Note that $N \not= 1$ since $Z$ has positive dimension.

Consider the operation of taking quotient mixed Shimura datum $\widetilde{p}_N \colon (Q,\cY^+) \rightarrow (Q,\cY^+)/N =: (Q', \cY^{\prime+})$ discussed in $\mathsection$\ref{SubsectionGeomQuotByNormalSubgp} and the induced morphism on the corresponding mixed Shimura varieties of Kuga type
\begin{equation}\label{EqQuotXdegZarClosed}
\xymatrix{
\cY^+ \ar[r]^-{\widetilde{p}_N} \ar[d]_{\mathbf{u}|_{\cY^+}} & \cY^{\prime+} \ar[d] \\
M \ar[r]^-{p_N} & M' 
}
\end{equation}
%By construction we know that $Z^{\mathrm{biZar}}$ is a fiber of $p_N$. 
We refer to \eqref{EqGeomMeaningQuotShimura} for the geometric meaning of $p_N \colon M \rightarrow M'$ and notation.% Recall that every fiber of $p_N$ has the same dimension (which equals $\dim M_G - \dim M'_G + g_N$) by the discussion below \eqref{EqGeomMeaningQuotShimura}. Since $Z^{\mathrm{biZar}}$ is a fiber of $p_N$, we have that the dimension of every fiber of $p_N$ is $\dim Z^{\mathrm{biZar}}$.

%Let $X_M$ be an irreducible component of $X \cap M$. 
For any integer $h$, define
\begin{equation}\label{EqDefEh}
E_h = \{ x \in X : \dim_x (p_N|_{X\cap M})^{-1}(p_N(x)) > h\}.%\{x \in X : (p_N|_{X_M})^{-1}(p_N(x)) \text{ has an irreducible component of dimension}> h \text{ that contains }x \}.
\end{equation}
Then $E_h$ is Zariski closed in $X$.%, and is proper if $h \ge \dim X_M - \dim p_N(X_M)$.

\begin{prop}\label{PropAuxForZarClosedness}
For $g_N = \frac{1}{2}\dim (V_{2g} \cap N)$ as in \eqref{EqGeomMeaningQuotShimura}, we have
\begin{enumerate}
\item[(i)] We have $Z \subseteq E_{g_N - t}$.
\item[(ii)] We have $E_{g_N - t} \subseteq X^{\mathrm{deg}}(t)$.
\end{enumerate}
\end{prop}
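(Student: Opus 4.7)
My plan is to exploit two structural facts about the quotient $p_N$ and then manipulate dimension inequalities via Corollary~\ref{CorsgBiZar} and Lemma~\ref{LemmaEquivOfTwoInequality}.

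First I would record two structural inputs.
\textbf{(a)} By Theorem~\ref{ThmFinitenessALaBogomolov} applied to the weakly optimal $Z$, the closure $Z^{\biZar}=\mathbf{u}(N(\R)^+\tilde{y})$ coincides with a single fiber of $p_N\colon M\to M'$, because the quotient $(Q,\cY^+)\to(Q,\cY^+)/N$ identifies $N(\R)^+$-orbits in $\cY^+$ with points of $\cY^{\prime+}$. Hence $Z\subseteq Z^{\biZar}\subseteq M$, and after choosing $X_M$ to be the component of $X\cap M$ that contains $Z$, the restriction $p_N|_{X_M}$ sends $Z$ to a single point.
\textbf{(b)} From the description of the quotient in $\mathsection$\ref{SubsectionGeomQuotByNormalSubgp} (compare~\eqref{EqGeomMeaningQuotShimura}) every fiber $F$ of $p_N$ is a special, hence bi-algebraic, subvariety of $\mathfrak{A}_g$, and $\pi|_F\colon F\to\pi(F)$ is a family of $g_N$-dimensional abelian varieties. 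Consequently $\dim F-\dim\pi(F)=g_N$, and any irreducible subvariety $W\subseteq F$ satisfies $\dim W-\dim\pi(W)\le g_N$; applied to $W=Y^{\biZar}$ (which sits in $F$ whenever $Y\subseteq F$, because $F$ is itself bi-algebraic), this yields $\dim Y^{\biZar}-\dim\pi(Y^{\biZar})\le g_N$.

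\textbf{Part (i).} For every $x\in Z$, fact (a) gives $(p_N|_{X_M})^{-1}(p_N(x))\supseteq Z$, so the irreducible component of this fiber through $x$ has dimension $\ge\dim Z$. It therefore suffices to show $\dim Z\ge h=g_N+1-t$. The hypothesis $\codim_{\langle Z\rangle_{\mathrm{sg}}}(Z)<\dim\pi(Z)+t$ rewrites via Lemma~\ref{LemmaEquivOfTwoInequality} as
\[
\dim Z > \dim Z^{\biZar}-\dim\pi(Z)^{\biZar}-t.
\]
Since $\pi(Z)^{\biZar}=\pi(Z^{\biZar})$ by Remark~\ref{RemarkBiAlgSystem} and since (a)+(b) force $\dim Z^{\biZar}-\dim\pi(Z^{\biZar})=g_N$, the right-hand side equals $g_N-t$; thus $\dim Z\ge g_N+1-t=h$, and $x\in E_h$.

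\textbf{Part (ii).} For $x\in E_h$, pick an irreducible component $Y$ of $(p_N|_{X_M})^{-1}(p_N(x))$ containing $x$ with $\dim Y\ge h$. Then $Y$ is closed irreducible in $X$ and lies in the fiber $F_x=p_N^{-1}(p_N(x))$, so $Y^{\biZar}\subseteq F_x$. Using (b), Corollary~\ref{CorsgBiZar}, and $\pi(Y)^{\biZar}=\pi(Y^{\biZar})$,
\[
\dim\langle Y\rangle_{\mathrm{sg}}-\dim\pi(Y)=\dim Y^{\biZar}-\dim\pi(Y)^{\biZar}\le g_N<g_N+1=h+t\le\dim Y+t.
\]
In the relevant range $h\ge 1$ this $Y$ is positive-dimensional and witnesses $x\in X^{\mathrm{deg}}(t)$.

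\textbf{Main obstacle.} The technical heart is fact (b): that the restriction of $\pi$ to a fiber of $p_N$ has relative dimension exactly $g_N$, and that the bound propagates to the bi-algebraic closure of an arbitrary subvariety of the fiber. Both rest on the explicit geometric unpacking of the Shimura quotient $(Q,\cY^+)/N$ in $\mathsection$\ref{SubsectionGeomQuotByNormalSubgp}, combined with the identification of $V_N$ as the unipotent radical of $N$ (a weight-theoretic observation). Everything else is careful bookkeeping with Lemma~\ref{LemmaEquivOfTwoInequality} and Corollary~\ref{CorsgBiZar}.
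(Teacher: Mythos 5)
Your proof is correct and is essentially the paper's own argument: in (i) you rewrite the defining inequality for $Z$ via Lemma~\ref{LemmaEquivOfTwoInequality} and use that $Z^{\biZar}$ is a fiber of $p_N$, hence of relative dimension $g_N$ over its image under $\pi$, to get $\dim Z\ge h$; in (ii) you use that the bi-algebraic fiber $p_N^{-1}(p_N(x))$ contains $Y^{\biZar}$, so $\dim Y^{\biZar}-\dim\pi(Y)^{\biZar}\le g_N$, and conclude with Corollary~\ref{CorsgBiZar} (equivalently Lemma~\ref{LemmaEquivOfTwoInequality}), exactly as the paper does. Two harmless remarks: the fibers of $p_N$ are weakly special (translates of abelian subschemes by constant, not necessarily torsion, sections, per the discussion below \eqref{EqGeomMeaningQuotShimura}), so calling them ``special'' overstates the premise, though only their bi-algebraicity is needed and that is precisely what the paper invokes; and your proviso $h\ge 1$ in (ii) coincides with the paper's implicit reliance on the positive-dimensionality of $Y$ required in Definition~\ref{DefnDegeneracyLocus}.
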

\begin{proof}
\begin{enumerate}
\item[(i)] Recall that $Z$ satisfies $\dim \langle Z \rangle_{\mathrm{sg}} - \dim \pi(Z) < \dim Z + t$. Therefore $\delta_{\mathrm{ws}}(Z) <  \dim \pi(Z)^{\mathrm{biZar}} + t$ by Lemma~\ref{LemmaEquivOfTwoInequality}. Hence
\[
\dim Z > \dim  Z^{\mathrm{biZar}} - \dim \pi(Z)^{\mathrm{biZar}} - t.
\]
By construction we know that $Z^{\mathrm{biZar}}$ is a fiber of $p_N$. Hence as a morphism, $Z^{\mathrm{biZar}} \rightarrow \pi(Z)^{\mathrm{biZar}}$ is an abelian scheme of relative dimension $g_N$ by the discussion below \eqref{EqGeomMeaningQuotShimura}. So the inequality above becomes $\dim Z > g_N - t$. As $Z \subseteq X \cap M$, we have that $Z \subseteq E_{g_N - t}$ by definition of $E_{g_N - t}$.

\item[(ii)] For any $x \in E_{g_N - t}$, there exists a component $Y$ of $(p_N|_{X\cap M})^{-1}(p_N(x))$ containing $x$ such that $\dim Y > g_N - t$. Denote by $x' = p_N(x)$.

By \eqref{EqGeomMeaningQuotShimura} and the discussion below, $p_N^{-1}(x') \rightarrow \pi(p_N^{-1}(x'))$ is an abelian scheme of relative dimension $g_N$.

Proposition~\ref{PropBiAlgAg} says that as a morphism, $Y^{\mathrm{biZar}} \rightarrow \pi(Y)^{\mathrm{biZar}}$ is an abelian scheme. Since $p_N^{-1}(x')$ is bi-algebraic (\!\cite[Corollary~8.3]{GaoTowards-the-And}) and $Y \subseteq p_N^{-1}(x')$, we have $Y^{\mathrm{biZar}} \subseteq p_N^{-1}(x')$. Thus the relative dimension of the abelian scheme $Y^{\mathrm{biZar}} \rightarrow \pi(Y)^{\mathrm{biZar}}$ is at most $g_N$. So
\[
\dim Y > g_N-t \ge \dim Y^{\mathrm{biZar}} - \dim \pi(Y)^{\mathrm{biZar}} -t.
\]
Hence $\delta_{\mathrm{ws}}(Y) < \dim \pi(Y)^{\mathrm{biZar}} + t$. 
Thus $\dim \langle Y \rangle_{\mathrm{sg}} - \dim \pi(Y) < \dim Y + t$ by Lemma~\ref{LemmaEquivOfTwoInequality}, and hence $Y \subseteq X^{\mathrm{deg}}(t)$ by definition. By varying $x \in E_h$, we get the conclusion. \qedhere
\end{enumerate}
\end{proof}

\subsection{Proof of Theorem~\ref{ThmDegeneracyLocusZarClosed}}\label{SubsectionEndOfProofOfZarClosedOfDegLocus}
Now we are ready to prove Theorem~\ref{ThmDegeneracyLocusZarClosed}. Let $X$ be as in Theorem~\ref{ThmDegeneracyLocusZarClosed}. Let $\Sigma$ be the finite set in Theorem~\ref{ThmFinitenessALaBogomolov}.

For any positive dimensional  closed irreducible subvariety $Z$ of $X$ such that $\dim \langle Z \rangle_{\mathrm{sg}} - \dim \pi(Z) < \dim Z + t$ and is maximal for this property, we obtain some $((Q,\cY^+),N) \in \Sigma$ from which we can construct a Zariski closed subset $E_{g_N - t}$ of $X$ such that $Z \subseteq E_{g_N - t}$ by (i) of Proposition~\ref{PropAuxForZarClosedness}. Since $\Sigma$ is a finite set, we have finitely many such $E_{g_N - t}$'s. By definition of $X^{\mathrm{deg}}(t)$, we then have that $X^{\mathrm{deg}}(t)$ is contained in the union of these $E_{g_N - t}$'s, which is a Zariski closed subset of $X$.

Conversely by (ii) of Proposition~\ref{PropAuxForZarClosedness}, each such $E_{g_N - t}$ is contained in $X^{\mathrm{deg}}(t)$.

Hence $X^{\mathrm{deg}}(t)$ is the union of these $E_{g_N - t}$'s. This is a finite union with each member being a closed subset of $X$. Hence $X^{\mathrm{deg}}(t)$ is Zariski closed in $X$.

%% Section 8
\section{Criterion of degenerate subvarieties}\label{SectionCriterionDegeneracy}
Let $X$ be an irreducible subvariety of $\mathfrak{A}_g$. Through the whole section we fix a $t \in \Z$. Let $X^{\mathrm{deg}}(t)$ be the $t$-th degeneracy locus of $X$ defined in Definition~\ref{DefnDegeneracyLocus}.

The goal of this section is to prove the criterion for $X = X^{\mathrm{deg}}(t)$, and hence part (i) of Theorem~\ref{ThmCriterionDegIntro} for $X \subseteq \mathfrak{A}_g$ in view of Proposition~\ref{PropDegLocus} and Theorem~\ref{ThmDegeneracyLocusZarClosed}.

For notation let $B = \pi(X)$, let $\cA_X$ be the translate of an abelian subscheme of $\mathfrak{A}_g|_B \rightarrow B$ by a torsion section which contains $X$, minimal for this property. Then $\cA_X \rightarrow B$ itself is an abelian scheme (up to taking a finite covering of $B$), whose relative dimension we denote by $g_X$.

For any abelian subscheme $\cB$ of $\cA_X \rightarrow B$ whose relative dimension we denote by $g_{\cB}$, we obtain the following diagram
\begin{equation}\label{EqGeomMeaningQuotShimuraCrit}
\xymatrix{
\cA_X \ar[r]^-{p_{\cB}} \ar[d]_{\pi|_{\cA_X}} & \cA_X/\cB \ar[d]_{\pi_{/\cB}} \ar[r]^-{\iota_{/\cB}} \pullbackcorner & \mathfrak{A}_{g_X - g_{\cB}} \ar[d] \\
B \ar[r]^-{\mathrm{id}_B} & B \ar[r]^-{\iota_{/\cB,G}} & \A_{g_X-g_{\cB}},
}
\end{equation}
where $p_{\cB}$ is taking the quotient abelian scheme, and the right box is the modular map.

\begin{thm}\label{ThmCriterionXDegenerateGeom}
Assume either $t \le 0$, or $t = 1$ and $\cA_X = \mathfrak{A}_g|_B$. Then $X = X^{\mathrm{deg}}(t)$ if and only if the following condition holds: There exists an abelian subscheme $\cB$ of $\cA_X \rightarrow B$ (whose relative dimension we denote by $g_{\cB}$) such that for the map $\iota_{/\cB} \circ p_{\cB}$ constructed above, we have $\dim (\iota_{/\cB} \circ p_{\cB})(X) < \dim X -  g_{\cB} + t$ and that $\iota_{/\cB} \circ p_{\cB}$ is not generically finite.
\end{thm}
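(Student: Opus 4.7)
The proof naturally splits into the two implications; in both directions the central geometric object is the fibration $\iota_{/\cB}\circ p_\cB\colon\cA_X\to\mathfrak A_{g_X-g_\cB}$ together with the description of its fibers coming from the quotient construction \eqref{EqGeomMeaningQuotShimura}.

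For the \emph{sufficiency} direction, I would take any point $x\in X$ and let $Y$ be an irreducible component of $X\cap(\iota_{/\cB}\circ p_\cB)^{-1}((\iota_{/\cB}\circ p_\cB)(x))$ through $x$. The hypothesis gives $\dim Y\ge\dim X-\dim(\iota_{/\cB}\circ p_\cB)(X)>g_\cB-t$. By \eqref{EqGeomMeaningQuotShimura}, every fiber of $\iota_{/\cB}\circ p_\cB$ is bi-algebraic and, over its image in $B$, is the translate of an abelian subscheme of $\cA_X$ of relative dimension $g_\cB$ by a constant section. Proposition~\ref{PropBiAlgAg} and Corollary~\ref{CorsgBiZar} then give $\langle Y\rangle_{\mathrm{sg}}\subseteq(\iota_{/\cB}\circ p_\cB)^{-1}(\cdot)\cap\mathfrak A_g|_{\pi(Y)}$, so $\dim\langle Y\rangle_{\mathrm{sg}}-\dim\pi(Y)\le g_\cB<\dim Y+t$. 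To conclude $Y$ is positive dimensional I would separate the two hypotheses: if $t\le 0$ then $\dim Y>g_\cB-t\ge -t\ge 0$; if $t=1$ with $\cA_X=\mathfrak A_g|_B$, the case $g_\cB=0$ would make $\iota_{/\cB}$ the essentially closed immersion $\mathfrak A_g|_B\hookrightarrow\mathfrak A_g$, contradicting the non-generic-finiteness hypothesis, so $g_\cB\ge 1$ and $\dim Y\ge g_\cB\ge 1$. Hence $Y\subseteq X^{\mathrm{deg}}(t)$, and letting $x$ vary gives $X=X^{\mathrm{deg}}(t)$.

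For the \emph{necessity} direction, I would invoke the proof of Theorem~\ref{ThmDegeneracyLocusZarClosed} in $\mathsection$\ref{SubsectionEndOfProofOfZarClosedOfDegLocus}: $X^{\mathrm{deg}}(t)=\bigcup E_h$ is a finite union indexed by pairs $((Q,\cY^+),N)\in\Sigma$ from Theorem~\ref{ThmFinitenessALaBogomolov}, with $h=g_N+1-t$. Irreducibility of $X$ forces $X\subseteq E_h$ for some such pair, hence $X\subseteq M=\bu(\cY^+)$ and $\dim X-\dim p_N(X)\ge g_N+1-t$. I would then transport the normal subgroup $N\lhd Q$ to the Shimura subdatum $(Q_X,\cY_X^+)$ attached to $\cA_X$: after translating by the torsion section defining $\cA_X$, one may arrange $(Q_X,\cY_X^+)\hookrightarrow(Q,\cY^+)$, and setting $N':=N\cap Q_X\lhd Q_X$ produces an abelian subscheme $\cB\subseteq\cA_X\to B$ with $g_\cB=\tfrac12\dim(V_{2g}\cap N\cap Q_X)$. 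The quotient construction then gives a commutative factorization of $p_N|_{\cA_X}$ through $\iota_{/\cB}\circ p_\cB$ and a finite map to $M'$, so $\dim p_N(X)=\dim(\iota_{/\cB}\circ p_\cB)(X)$ and the dimension inequality $\dim(\iota_{/\cB}\circ p_\cB)(X)<\dim X-g_\cB+t$ follows once one checks $g_\cB=g_N$ (which holds because $X\subseteq\cA_X$ and the weakly optimal $Z$ producing $N$ sits inside $\cA_X$, so $V_N\subseteq V_{Q_X}$). Non-generic-finiteness of $\iota_{/\cB}\circ p_\cB$ holds because $\cB$ has positive relative dimension (if $g_N\ge 1$), or, for $t=1$ with $\cA_X=\mathfrak A_g|_B$, it must be verified directly from the strict inequality on images.

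\textbf{Main obstacle.} The delicate step is the necessity direction: passing from the Shimura-theoretic datum $((Q,\cY^+),N)$ output by Theorem~\ref{ThmFinitenessALaBogomolov} to a concrete abelian subscheme $\cB\subseteq\cA_X$, and verifying that $g_\cB=g_N$ so that the dimension drop $\dim X-\dim p_N(X)\ge g_N+1-t$ transfers to $\dim(\iota_{/\cB}\circ p_\cB)(X)<\dim X-g_\cB+t$. This requires the containment $V_N\subseteq V_{Q_X}$, which in turn follows from the fact that the weakly optimal $Z$ whose $Z^{\mathrm{biZar}}$ is the $N$-orbit actually lies in $X\subseteq\cA_X$; identifying this containment and ruling out the trivial solution $\cB=0$ in the $t=1,\ \cA_X=\mathfrak A_g|_B$ case via the non-generic-finiteness condition are the technical crux, and explain why the theorem restricts to exactly the pairs $(t,\cA_X)$ stated in the hypothesis.
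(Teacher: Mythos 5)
Your overall architecture mirrors the paper's (use the finiteness theorem and the $E_h$-decomposition from the proof of Theorem~\ref{ThmDegeneracyLocusZarClosed} for necessity; analyze fibers of $\iota_{/\cB}\circ p_{\cB}$ for sufficiency), but in both directions you assert, rather than prove, the one identification that carries the whole proof: that the geometric map $\iota_{/\cB}\circ p_{\cB}$ attached to an arbitrary abelian subscheme $\cB$ of $\cA_X\to B$ over the (non-special) base $B$ agrees, up to a finite map, with a quotient Shimura morphism $p_N$ on the smallest special subvariety $M\supseteq X$. In the sufficiency direction you invoke \eqref{EqGeomMeaningQuotShimura} to claim that every fiber of $\iota_{/\cB}\circ p_{\cB}$ is bi-algebraic and is a translate of an abelian subscheme by a constant section; but \eqref{EqGeomMeaningQuotShimura} describes fibers of $p_N$ for a normal subgroup $N$ of the datum of a special subvariety, and it does not apply to $\iota_{/\cB}\circ p_{\cB}$ until one has produced such an $N$ with $p_N=\iota_{/\cB}\circ p_{\mathfrak B}$. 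That is exactly the content of the paper's $\Leftarrow$ argument: abelian subschemes of $M\to M_G$ correspond to $G_Q$-submodules $V_N\subseteq V_Q$ (Deligne), the kernel $H$ of the pure Shimura morphism $\tilde\iota_{/\mathfrak B,G}$ is identified via connected algebraic monodromy of the fibers of $\iota_{/\mathfrak B,G}$ and Deligne's Theorem of the Fixed Part, and one sets $N=V_N\rtimes H$. Likewise, in the necessity direction your ``commutative factorization of $p_{N}|_{\cA_X}$ through $\iota_{/\cB}\circ p_{\cB}$ and a finite map'' is precisely this unproven identification (one must compare fibers of $\iota_{/\cB,G}$ in $B$ with monodromy orbits, which again requires the fixed-part/monodromy argument); the paper sidesteps your comparison by working with the minimal $M$ and its $N$, for which $\iota_{/\cB}\circ p_{\cB}$ is literally $p_N$ restricted to $\cA_X=M|_B$ followed by a modular map, and by proving $\dim p_N(X)=\dim p_{N_*}(X)$ through the equality $N(\R)^+\tilde y\cap\tilde X=N_*(\R)^+\tilde y\cap\tilde X$ (so it never needs your claim $g_{\cB}=g_{N_*}$, only $g_N\le g_{N_*}$).

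A second genuine gap is the non-generic-finiteness of $\iota_{/\cB}\circ p_{\cB}$ in the necessity direction when $t=1$. You write that it ``must be verified directly from the strict inequality on images,'' but it does not follow from that inequality: when $g_{\cB}=0$ and $t=1$ the inequality reads $\dim(\iota_{/\cB}\circ p_{\cB})(X)\le\dim X$, which is vacuous. The paper needs a dedicated argument here: if $\iota_{/\cB}\circ p_{\cB}$ were generically finite then $g_N=0$, so $M_0=M=\mathfrak A_g|_{M_G}$ (using $\cA_X=\mathfrak A_g|_B$), and isotriviality of $\mathfrak A_g$ over the fibers of $p_{G_N}$ forces those fibers to be points, hence $\dim M=\dim M'$, contradicting the non-triviality of $N$ (itself established by a separate group-theoretic argument in Proposition~\ref{ThmCriterionXDegenerate}: if $N=(Q\cap N_*)^{\circ}$ were trivial one deduces $g_{N_*}=0$, $t=1$, and then $N_*$ acts trivially on $V_{2g}\cap Q_*=V_{2g}$, forcing $N_*=1$). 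Neither this step nor the $t=1$ non-triviality argument appears in your outline, and they are exactly where the hypothesis ``$t=1$ and $\cA_X=\mathfrak A_g|_B$'' is consumed; so as written the proposal has real gaps at the points you yourself flag as the crux.
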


Note that when $t \le 0$, the condition \textit{$\iota_{/\cB} \circ p_{\cB}$ is not generically finite} is redundant. This is because in this case, $\dim (\iota_{/\cB} \circ p_{\cB})(X) < \dim X -  g_{\cB} + t$ implies $\dim (\iota_{/\cB}\circ p_{\cB})(X) < \dim X$, and hence $\iota_{/\cB}\circ p_{\cB}$ is not generically finite.

\subsection{Auxiliary proposition}
%Let $M$ be the smallest special subvariety of $\mathfrak{A}_g$ which contains $X$. Assume that $M$ is associated with the mixed Shimura subdatum of Kuga type $(Q,\cY^+)$ of $(P_{2g,\mathrm{a}},\cX_{2g,\mathrm{a}}^+)$.
We start by proving the following auxiliary proposition.
\begin{prop}\label{PropCriterionXDegenerate}
We have $X = X^{\mathrm{deg}}(t)$ if and only if there exist
\begin{itemize}
\item a special subvariety $M_*$ of $\mathfrak{A}_g$, associated with $(Q_*,\cY^+_*)$, which contains $X$;
\item  a non-trivial connected normal subgroup $N_*$ of $Q_*$ whose reductive part is semi-simple;
\end{itemize}
such that the following condition holds: For the operation of taking quotient mixed Shimura datum $\tilde{p}_{N_*} \colon (Q_*,\cY_*^+) \rightarrow (Q_*,\cY_*^+)/N_*$ discussed in $\mathsection$\ref{SubsectionGeomQuotByNormalSubgp} and the induced morphism on the corresponding mixed Shimura varieties of Kuga type
\begin{equation}\label{EqQuotXdegCriterionPrem}
\xymatrix{
\cY_*^+ \ar[r]^-{\tilde{p}_{N_*}} \ar[d]_{\mathbf{u}|_{\cY_*^+}} & \cY_*^{\prime+} \ar[d] \\
M_* \ar[r]^-{p_{N_*}} & M_*' ,
}
\end{equation}
we have $\dim X - \dim p_{N_*}(X) > g_{N_*} - t$, where $g_{N_*} = \frac{1}{2} \dim (V_{2g} \cap N_*)$.
\end{prop}

\begin{proof}
We prove $\Leftarrow$.  Let $E_{g_{N_*} - t} = \{ x \in X : \dim_x (p_{N_*}|_{X})^{-1}(p_{N_*}(x)) > g_{N_*} - t\}$ as defined in \eqref{EqDefEh}. Then $X = E_{g_{N_*} - t}$ as $\dim X - \dim p_{N_*}(X) > g_{N_*} - t$. 
Note that \eqref{EqQuotXdegZarClosed} and \eqref{EqQuotXdegCriterionPrem} have the same shape. 
Hence $X = X^{\mathrm{deg}}(t)$ by (ii) of Proposition~\ref{PropAuxForZarClosedness}.

Now let us prove $\Rightarrow$. By the proof of Theorem~\ref{ThmDegeneracyLocusZarClosed} ($\mathsection$\ref{SubsectionEndOfProofOfZarClosedOfDegLocus}), we have that $X^{\mathrm{deg}}(t)$ is a finite union of some Zariski closed subsets $E_{g_{N_*} - t}$'s of $X$. Now $X = E_{g_{N_*} - t}$ for some $E_{g_{N_*} - t}$ since $X= X^{\mathrm{deg}}(t)$.

Recall the definition of $E_{g_{N_*} - t}$ as in \eqref{EqDefEh}: $E_{g_{N_*} - t} = \{ x \in X : \dim_x (p_{N_*}|_{X \cap M_*})^{-1}(p_{N_*}(x)) > g_{N_*} - t\}$ for some mixed Shimura subdatum of Kuga type $(Q_*,\cY_*^+)$ of $(P_{2g,\mathrm{a}},\cX_{2g,\mathrm{a}}^+)$ and a non-trivial connected normal subgroup $N_*$ of $Q_*$ whose reductive part is semi-simple, with $g_{N_*} = \frac{1}{2}\dim(V_{2g} \cap N_*)$.

By definition of $E_{g_{N_*} - t}$, it is contained in $X \cap M_*$. Hence $X = E_{g_{N_*} - t} \subseteq M_*$.

Now that each fiber of $p_{N_*}|_X$ has dimension $> g_{N_*}-t$, we have
%\begin{equation}\label{EqCriterionProjEqPreliminary}
 $\dim X - \dim p_{N_*}(X) > g_{N_*} - t$.
%\end{equation}
%Otherwise $h_* = g_{N_*} +1-t > \dim X - \dim p_{N_*}(X)$, and then $E_{h_*}$ is proper Zariski closed in $X$, contradicting $X = E_{h_*}$.
\end{proof}

\subsection{Theorem~\ref{ThmCriterionXDegenerateGeom} in terms of mixed Shimura variety}
In this subsection, we prove Theorem~\ref{ThmCriterionXDegenerateGeom} in terms of mixed Shimura variety. Then we translate it into the desired geometric description in the next subsection.

Let $M$ be the smallest special subvariety of $\mathfrak{A}_g$ which contains $X$. Assume that $M$ is associated with the mixed Shimura subdatum of Kuga type $(Q,\cY^+)$ of $(P_{2g,\mathrm{a}},\cX_{2g,\mathrm{a}}^+)$.

\begin{prop}\label{ThmCriterionXDegenerate}
Assume either $t \le 0$, or $t = 1$ and $M = \mathfrak{A}_g|_{\pi(M)}$. Then $X = X^{\mathrm{deg}}(t)$ if and only if there exists a non-trivial connected normal subgroup $N$ of $Q$ whose reductive part is semi-simple, such that the following condition holds: For the operation of taking quotient Shimura datum $\widetilde{p}_N \colon (Q,\cY^+) \rightarrow (Q,\cY^+)/N =: (Q', \cY^{\prime+})$ discussed in $\mathsection$\ref{SubsectionGeomQuotByNormalSubgp} and the induced morphism on the corresponding mixed Shimura varieties of Kuga type
\begin{equation}\label{EqQuotXdegCriterion}
\xymatrix{
\cY^+ \ar[r]^-{\widetilde{p}_N} \ar[d]_{\mathbf{u}|_{\cY^+}} & \cY^{\prime+} \ar[d] \\
M \ar[r]^-{p_N} & M' ,
}
\end{equation}
we have $\dim X - \dim p_N(X) > g_N - t$, where $g_N = \frac{1}{2} \dim (V_{2g} \cap N)$.
\end{prop}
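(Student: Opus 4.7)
The $\Leftarrow$ direction is immediate from Proposition~\ref{PropCriterionXDegenerate}: given $N \lhd Q$ as specified, take $(Q_*, \cY_*^+) := (Q, \cY^+)$ and $N_* := N$, noting $X \subseteq M = \mathbf{u}(\cY^+)$. For the $\Rightarrow$ direction, apply Proposition~\ref{PropCriterionXDegenerate} to obtain a mixed Shimura subdatum $(Q_*, \cY_*^+)$ with $X \subseteq M_* := \mathbf{u}(\cY_*^+)$ and a non-trivial connected normal subgroup $N_* \lhd Q_*$ with semi-simple reductive part such that $\dim X - \dim p_{N_*}(X) > g_{N_*} - t$. The minimality of $M$ forces $M \subseteq M_*$, hence an embedding $(Q,\cY^+) \hookrightarrow (Q_*,\cY_*^+)$. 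I propose to set $N := (Q \cap N_*)^0$, which is connected and normal in $Q$. The composition $(Q,\cY^+) \hookrightarrow (Q_*,\cY_*^+) \twoheadrightarrow (Q_*,\cY_*^+)/N_*$ has kernel $Q \cap N_*$, differing from $N$ only by a finite group, so the induced morphism $(Q,\cY^+)/N \to (Q_*,\cY_*^+)/N_*$ is a finite cover onto its image, giving $\dim p_N(X) = \dim p_{N_*}(X)$. Since $V_N = V_{2g} \cap N \subseteq V_{2g} \cap N_* = V_{N_*}$ we have $g_N \le g_{N_*}$, yielding
\[
\dim X - \dim p_N(X) = \dim X - \dim p_{N_*}(X) > g_{N_*} - t \ge g_N - t.
\]

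Non-triviality of $N$ uses the hypothesis. If $N = 1$, then $Q \cap N_*$ is finite, forcing $\dim X = \dim p_{N_*}(X)$ and $0 > g_{N_*} - t$, impossible when $t \le 0$. In the case $t = 1$ with $M = \mathfrak{A}_g|_{\pi(M)}$, we have $V_{2g} \subseteq Q$; a direct computation in $V_{2g} \rtimes G_Q$ using faithfulness of $G_Q \hookrightarrow \GSp_{2g}$ shows that any connected normal subgroup of $Q$ with trivial unipotent radical must be trivial, and an analogous argument applied inside $Q_*$ (together with a refinement of Proposition~\ref{PropCriterionXDegenerate} ensuring that the associated integer $h_* = g_{N_*}+1-t$ is positive, as needed for the weakly optimal subvariety argument of Proposition~\ref{PropAuxForZarClosedness} to produce positive-dimensional subvarieties) forces $g_{N_*} \ge 1$. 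Then $V_{N_*} \subseteq V_{2g} \subseteq Q$ gives $V_{N_*} \subseteq Q \cap N_*$, so $V_N \supseteq V_{N_*}$ is positive-dimensional, hence $N$ is non-trivial.

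The main obstacle is verifying semi-simplicity of the reductive part of $N$. The reductive part of $(Q \cap N_*)^0$ is the identity component of $G_Q \cap G_{N_*}$, which is normal in $G_Q$ (since $G_{N_*} \lhd G_{Q_*}$ and any $g \in G_Q \subseteq G_{Q_*}$ satisfies $g G_{N_*} g^{-1} = G_{N_*}$), but it may contain a central torus and thus fail to be semi-simple in general. To resolve this I plan to refine $N$ to $V_N \rtimes ((G_Q \cap G_{N_*})^0)^{\der}$, which is still connected, normal in $Q$, and has semi-simple reductive part by construction. Verifying that this refinement preserves the inequality $\dim X - \dim p_N(X) > g_N - t$ is the delicate step; I expect it to follow from the minimality of $M$, since any non-trivial foliation of $X$ along the central-torus direction of $(Q \cap N_*)^0$ would produce a proper special subvariety of $M$ still containing $X$, contradicting the choice of $M$.
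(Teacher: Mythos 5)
Your outline reproduces the paper's strategy almost step for step: $\Leftarrow$ by feeding $(Q,\cY^+)$ and $N$ into Proposition~\ref{PropCriterionXDegenerate}; for $\Rightarrow$, invoking that proposition to get $(Q_*,\cY_*^+)$ and $N_*$, using minimality of $M$ to get $(Q,\cY^+)\subseteq (Q_*,\cY_*^+)$, taking $N$ essentially equal to the identity component of $Q\cap N_*$, comparing $\dim p_N(X)$ with $\dim p_{N_*}(X)$, using $g_N\le g_{N_*}$, and handling non-triviality of $N$ exactly as the paper does (for $t\le 0$ directly, and for $t=1$ via $V_{2g}\cap Q=V_{2g}$ and the fact that no non-trivial connected subgroup of $\Sp_{2g}$ acts trivially on $V_{2g}$). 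The problem is the step you yourself flag as unresolved: making the reductive part of $N$ semi-simple while keeping the inequality. That is a genuine gap as written, and your proposed repair is not the right mechanism. Minimality of $M$ controls special subvarieties containing $X$; the central-torus directions of $(Q\cap N_*)^0$ do not produce a special subvariety containing $X$, so there is no contradiction with the choice of $M$ to be had, and the "foliation" heuristic does not obviously close the argument.

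The paper avoids the issue by never comparing the refined group with the unrefined one. It replaces $N$ by $N\cap\tilde{\pi}^{-1}(\tilde{\pi}(N)^{\der})$ (i.e. $V_N\rtimes G_N^{\der}$, which is still connected and normal in $Q$ with semi-simple reductive part) at the outset, and notes that this does not change $V_N$, hence not $g_N$; it then proves the key equality $\dim p_N(X)=\dim p_{N_*}(X)$ \emph{directly for this refined $N$}, by working on the uniformizing spaces: each fiber of $\tilde{p}_N|_{\tilde{X}}$ is $N(\R)^+\tilde{y}\cap\tilde{X}$, each fiber of $\tilde{p}_{N_*}|_{\tilde{X}}$ is $N_*(\R)^+\tilde{y}\cap\tilde{X}$, and since $\tilde{X}\subseteq\cY^+$ one has $N(\R)^+\tilde{y}\cap\tilde{X}=\bigl(\cY^+\cap N_*(\R)^+\tilde{y}\bigr)\cap\tilde{X}=N_*(\R)^+\tilde{y}\cap\tilde{X}$, so the two restricted projections have fibers of the same dimension and the inequality follows from $g_N\le g_{N_*}$ with no separate "preservation under refinement" lemma. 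Your "finite cover onto its image" argument for the unrefined group is in the same spirit as this fiber comparison, but to complete your proof you should run that comparison for the semi-simplified $N$ itself (as the paper does), rather than first proving the inequality for $(Q\cap N_*)^0$ and then trying to transfer it across the passage to the derived group of the reductive part.
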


%\begin{rmk}\label{RmkCriterionXDegenerate}
%Fix a Levi decomposition $Q = V_Q \rtimes G_Q$.\footnote{Here $V_Q = V_{2g} \cap Q$ and $G_Q = \tilde{\pi}(Q)$ is the reductive part of $Q$.} If two connected normal subgroup $N_1 < N_2$ of $Q$ have semi-simple reductive part, then $\dim p_{N_2}(X) \le \dim p_{N_1}(X)$. If furthermore $N_1 \cap V_{2g} = N_2 \cap V_{2g}$, then $\dim X - \dim p_{N_1}(X) \le \dim X - \dim p_{N_2}(X)$. Thus in order to check the equivalent condition for $X = X^{\mathrm{deg}}(t)$ given by Proposition~\ref{ThmCriterionXDegenerate}, it suffices to check it for all the non-trivial $N$'s constructed in the following way:\footnote{By using the following fact (which is not hard to check): a subgroup $N$ of $Q$ is normal if and only if $V_Q \cap N$ is a $G_Q$-submodule of $V_Q$, $\tilde{\pi}(N)$ is normal in $G_Q$, and $\tilde{\pi}(N)$ acts trivially on $V_Q/(V_Q \cap N)$.} Let $V_N$ be a $G_Q$-submodule of $V_Q$, and let $G_N$ be the largest connected normal subgroup of $G_Q^{\mathrm{der}}$ which acts trivially on $V_Q/V_N$. Let $N = V_N \rtimes G_N$ (compatible with $Q = V_Q \rtimes G_Q$).
%\end{rmk}

\begin{proof}[Proof of Proposition~\ref{ThmCriterionXDegenerate}]
We use Proposition~\ref{PropCriterionXDegenerate}. First $\Leftarrow$ of Proposition~\ref{ThmCriterionXDegenerate} follows directly from $\Leftarrow$ of Proposition~\ref{PropCriterionXDegenerate}.

Let us prove $\Rightarrow$ of Proposition~\ref{ThmCriterionXDegenerate}. Let $M_*$, $(Q_*,\cY^+_*)$, $N_*$ and $g_{N_*}$ be as in $\Rightarrow $ of Proposition~\ref{PropCriterionXDegenerate}.

Set $N$ to be the identity component of $Q \cap N_*$. Since $X \subseteq M_*$ and $M$ is the smallest special subvariety of $\mathfrak{A}_g$ which contains $X$, we have $M \subseteq M_*$. Hence we may assume $(Q,\cY^+) \subseteq (Q_*,\cY_*^+)$. Let $N = Q \cap N_*$, then $N \lhd Q$. Replacing $N$ by $N \cap \tilde{\pi}^{-1}(\tilde{\pi}(N)^{\mathrm{der}})$, we may and do assume that the reductive part of $N$ is semi-simple; here $\tilde{\pi} \colon P_{2g,\mathrm{a}} \rightarrow \mathrm{GSp}_{2g}$.

Let us temporarily assume
\begin{equation}\label{EqCritInequalityAfterQuot}
\dim p_{N_*}(X) = \dim p_N(X)
\end{equation}
and finish the proof by showing that this $N$ can be taken as the desired connected normal subgroup of $Q$. We need to prove:
\begin{enumerate}
\item[(i)] $\dim X - \dim p_N(X) > g_N - t$ for $g_N = \frac{1}{2} \dim (V_{2g} \cap N)$;
\item[(ii)] $N$ is a non-trivial group.
\end{enumerate}

For (i): We have $g_N \le g_{N_*}$ since $N < N_*$. So $\dim X - \dim p_N(X) = \dim X - \dim p_{N_*}(X) > g_{N_*} - t \ge g_N - t$.

For (ii): Suppose $N$ is trivial. Then $p_N = \mathrm{id}_M$ and thus $\dim p_N(X) = \dim X$. So $\dim p_{N_*}(X) = \dim X$. Hence $p_{N_*}|_X$ is generically finite. So $X= E_{g_{N_*}-t}$ implies that $g_{N_*} - t < 0$. But then $g_{N_*} = 0$ and $t = 1$ as $t \le 1$. Hence $N_*$ is reductive. It can be viewed as a subgroup of $\Sp_{2g}$ via $\tilde{\pi} \colon P_{2g,\mathrm{a}} \rightarrow \GSp_{2g}$. Now $N_* \lhd Q_*$ implies that the subgroup $N_*$ of $\Sp_{2g}$ acts trivially on $V_{2g} \cap Q_*$. By our hypothesis on $X$ (note that $t = 1$ now) and Proposition~\ref{PropSpecialSubvarUnivAb}, we have $M = \mathfrak{A}_g|_{\pi(M)}$. Hence $V_{2g} \cap Q = V_{2g}$. Thus $V_{2g} \cap Q_* = V_{2g}$ since $Q \subseteq Q_*$. But the only connected subgroup of $\Sp_{2g}$ acting trivially on $V_{2g}$ is $1$. Hence $N_*$ is trivial, contradicting to the choice of $N_*$. Thus $N$ is non-trivial. Hence this $N$ can be taken as the desired connected normal subgroup of $Q$.

Now it remains to prove \eqref{EqCritInequalityAfterQuot}. Each fiber of $\tilde{p}_N$ is of the form $N(\R)^+\tilde{y}$ for some $\tilde{y} \in \cY^+$, and each fiber of $\tilde{p}_{N_*}$ is of the form $N_*(\R)^+\tilde{y}_*$ for some $\tilde{y}_* \in \cY^+_*$. As $\cY^+ \subseteq \cY^+_*$, we can take $\tilde{y}_* = \tilde{y} \in \cY^+$. By definition of $N$, we have $N(\R)^+\tilde{y} = Q(\R)^+ \tilde{y} \cap N_*(\R)^+ \tilde{y} = \cY^+ \cap N_*(\R)^+\tilde{y}$.

Denote by $\tilde{X}$ a complex analytic irreducible component of $\bu^{-1}(X)$ which is contained in $\cY^+$. Then by the previous paragraph, each fiber of $\tilde{p}_N|_{\tilde{X}}$ is of the form $N(\R)^+\tilde{y} \cap \tilde{X}$, and each fiber of $\tilde{p}_{N_*}|_{\tilde{X}}$ is of the form $N_*(\R)^+\tilde{y} \cap \tilde{X}$.  The last sentence of last paragraph says $N(\R)^+\tilde{y} \cap \tilde{X} = \cY^+ \cap N_*(\R)^+\tilde{y} \cap \tilde{X}$, which furthermore equals $N_*(\R)^+\tilde{y} \cap \tilde{X}$ since $\tilde{X} \subseteq \cY^+$. Thus the fibers of $\tilde{p}_N|_{\tilde{X}}$ and $\tilde{p}_{N_*}|_{\tilde{X}}$ have the same dimensions. Hence the fibers of $p_N|_X$ and $p_{N_*}|_X$ have the same dimensions. Thus \eqref{EqCritInequalityAfterQuot} holds. Now we are done.
\end{proof}

\subsection{Proof of Theorem~\ref{ThmCriterionXDegenerateGeom}}
Let $M$ be the smallest special subvariety of $\mathfrak{A}_g$ which contains $X$. Assume that $M$ is associated with the mixed Shimura subdatum of Kuga type $(Q,\cY^+)$ of $(P_{2g,\mathrm{a}},\cX_{2g,\mathrm{a}}^+)$. Denote by $V_Q = V_{2g} \cap Q$ and $G_Q = \tilde{\pi}(Q)$ the reductive part of $Q$. Denote by $g_Q = \frac{1}{2}\dim V_Q$.

Special subvarieties of $\mathfrak{A}_g$ are described in Proposition~\ref{PropSpecialSubvarUnivAb}. Hence the smallest special subvariety $M$ of $\mathfrak{A}_g$ which contains $X$ can be described as follows:
\begin{itemize}
\item Let $M_G$ be the smallest special subvariety of $\A_g$ which contains $\pi(X)$;
\item Let $M$ be the translate of an abelian subscheme of $\mathfrak{A}_g|_{M_G} \rightarrow M_G$ by a torsion section which contains $X$, minimal for this property.
\end{itemize}
Then $M \rightarrow M_G$ itself is an abelian scheme of relative dimension $g_Q$.% Its zero section gives rise to a Levi decomposition $Q = V_Q \rtimes G_Q$. 

\noindent \framebox{$\Rightarrow$ of Theorem~\ref{ThmCriterionXDegenerateGeom}} Apply $\Rightarrow$ of Proposition~\ref{ThmCriterionXDegenerate}. Then we obtain a non-trivial connected normal subgroup $N$ of $Q$ whose reductive part is semi-simple, such that for the quotient Shimura morphism $p_N \colon M \rightarrow M'$, we have ($g_N = \frac{1}{2}\dim(V_{2g}\cap N)$)
\begin{equation}\label{EqCritToGeom}
\dim X - \dim p_N(X) > g_N - t.
\end{equation}
Recall the geometric meaning of $p_N$ \eqref{EqGeomMeaningQuotShimura}
\[
\xymatrix{
M \ar[r]|-{p_0} \ar[d]_{\pi|_M} \ar@/^1pc/[rr]|-{p_N} & M_0 := \Gamma_0 \backslash \cY_0^+ \ar[d]_{\pi_0} \ar[r]|-{p'} \pullbackcorner & M' \ar[d]_{\pi'} \ar[r]^-{i} \pullbackcorner & \mathfrak{A}_{g_Q-g_N} \ar[d] \\
M_G \ar[r]^-{\mathrm{id}_{M_G}} & M_G \ar[r]^-{p_{G_N}} & M'_G \ar[r]^-{i_G} & \A_{g_Q-g_N} .
}
\]
The morphism $p_0$ is taking the quotient of $M \rightarrow M_G$ by the abelian subscheme $\ker(p_0)^{\circ}$ (which has relative dimension $g_N$). For each $b' \in M_G'$, the abelian scheme $M_0|_{p_{G_N}^{-1}(b')} \rightarrow p_{G_N}^{-1}(b')$ is isotrivial.

Take $\cB = \ker(p_0)^{\circ} \cap \cA_X$. Then $\cB$ is an abelian subscheme of $\cA_X \rightarrow B$ of relative dimension $g_N$, namely $g_{\cB} = g_N$. Now we can construct the maps in \eqref{EqGeomMeaningQuotShimuraCrit}. We have $\cA_X = M|_B$, $p_{\cB} = p_0|_{\cA_X}$, $\pi_{/\cB} = \pi_0|_{M_0|_B}$, $\iota_{/\cB} = (i \circ p')|_{M_0|_B}$ and $\iota_{/\cB,G} = (i_G \circ p_{G_N})|_B$. Thus
\[
\dim (\iota_{/\cB} \circ p_{\cB})(X) \le \dim p_N(X) < \dim X - g_N + t = \dim X - g_{\cB} +t
\]
where the first inequality follows from $X \subseteq M|_B$ and the second inequality follows from \eqref{EqCritToGeom}. Thus it suffices to prove that $\iota_{/\cB} \circ p_{\cB}$ is not generically finite.

Suppose $\iota_{/\cB} \circ p_{\cB}$ is generically finite. Then $g_N = 0$, $p_{\cB} = \mathrm{id}_{\cA_X}$ and $\iota_{/\cB}$ is generically finite. Hence $\dim (\iota_{/\cB} \circ p_{\cB})(X) = \dim X$. But then $\dim X < \dim X - g_{\cB} + t = \dim X + t$. When $t \le 0$ this cannot hold. Hence $t = 1$ and $\cA_X = \mathfrak{A}_g|_B$ by our hypothesis. Hence $M = \mathfrak{A}_g|_{M_G}$.

Since $g_N = 0$, we have $p_0 = \mathrm{id}_M$ and $M_0 = M = \mathfrak{A}_g|_{M_G}$. For each $b' \in M_G'$, the abelian scheme $\mathfrak{A}_g|_{p_{G_N}^{-1}(b')} \rightarrow p_{G_N}^{-1}(b')$ is isotrivial. So $\dim p_{G_N}^{-1}(b') = 0$; see the end of $\mathsection$\ref{SectionNotation}. Hence $p_{G_N}$ is generically finite, and so is $p'$. Thus $\dim M = \dim M'$. This contradicts our choice of $N$ (non-trivial connected).

%Let $G_B$ be the connected algebraic monodromy group of $B$, then $G_B \lhd G_Q^{\mathrm{der}}$ by Deligne-Andr\'{e} \cite[$\mathsection$5, Theorem~1]{AndreMumford-Tate-gr}.

%As a semi-simple group, $G_Q^{\mathrm{der}}$ is the almost direct product of some simple groups. Now $G_B$ and $G_N$ are both semi-simple normal subgroups of $G_Q^{\mathrm{der}}$, we have that either $G_B \cap G_N$ is finite, or $G_B \cap G_N$ contains some non-trivial simple factor of $G_B$.

\noindent \framebox{$\Leftarrow$ of Theorem~\ref{ThmCriterionXDegenerateGeom}}
Before moving on, let me point out that if $t \le 0$, then this implication follows rather easily from Proposition~\ref{PropDegLocus} and Theorem~\ref{ThmDegeneracyLocusZarClosed} because we can translate it into studying the generic rank of the Betti map. However the argument below works also for $t =1$.

Hodge theory says that: (1) every abelian subscheme of $\cA_X \rightarrow B$ is the intersection of an abelian subscheme of $M \rightarrow M_G$ with $\mathfrak{A}_g|_B$ (in particular $\cA_X = M \cap \mathfrak{A}_g|_B = M|_B$ and $g_X = g_Q$); (2) the abelian subschemes of $M \rightarrow M_G$ are in 1-to-1 correspondence to $G_Q$-submodules of $V_Q$. See \cite[4.4.1-4.4.3]{DeligneHodgeII}.

Assume that $\cB = \mathfrak{B} \cap \mathfrak{A}_g|_B$ where $\mathfrak{B}$ is an abelian subscheme of $M \rightarrow M_G$, and that $\mathfrak{B}$ corresponds to the $G_Q$-submodule $V_N$ of $V_Q$. Let $g_N = \frac{1}{2}\dim V_N$, then $\mathfrak{B} \rightarrow M_G$ has relative dimension $g_N$. Hence $g_{\cB} = g_N$.

Taking the quotient, we get
\[
\xymatrix{
M \ar[r]^-{p_{\mathfrak{B}}} \ar[d]_{\pi|_M}  & M/\mathfrak{B} \ar[d]^{\pi_{/\mathfrak{B}}}  \\
M_G \ar[r]^-{\mathrm{id}_{M_G}} & M_G  .
}
\]
In particular $\pi_{/\mathfrak{B}}$ is an abelian scheme of relative dimension $g_Q-g_N$. It induces a modular map
\[
\xymatrix{
M/\mathfrak{B} \ar[r]^-{\iota_{/\mathfrak{B}}} \ar[d]_{\pi_{/\mathfrak{B}}} \pullbackcorner & \mathfrak{A}_{g_Q-g_N} \ar[d] \\
M_G \ar[r]^-{\iota_{/\mathfrak{B},G}} & \A_{g_Q-g_N} .
}
\]
By definition of $\iota_{/\mathfrak{B},G}$, $(M/\mathfrak{B})|_{\iota_{/\mathfrak{B},G}^{-1}(a)}$ is an isotrivial abelian scheme for each $a \in \A_{g_Q-g_N}$.

Modular interpretation of Shimura varieties implies that $\iota_{/\mathfrak{B},G}$ is a Shimura morphism, namely $\iota_{/\mathfrak{B},G}$ is induced by some $\tilde{\iota}_{/\mathfrak{B},G} \colon (G_Q,\tilde{\pi}(\cY^+)) \rightarrow (\GSp_{2(g_Q-g_N)}, \mathfrak{H}_{g_Q-g_N}^+)$. Denote by $H$ the kernel of $\tilde{\iota}_{/\mathfrak{B},G}$ on the underlying groups, then $H \lhd G_Q$. Replace $H$ by $H^{\mathrm{der}}$, then $H \lhd G_Q^{\mathrm{der}}$.

Each fiber of $\tilde{\iota}_{/\mathfrak{B},G}$ on the underlying spaces is of the form $H(\R)^+\tilde{y}_G$ for some $\tilde{y}_G \in \tilde{\pi}(\cY^+)$. Hence each fiber of $\iota_{/\cB,G}$ is of the form $\bu_G(H(\R)^+\tilde{y}_G)$ for some $\tilde{y}_G \in \tilde{\pi}(\cY^+)$, where $\bu_G \colon \mathfrak{H}_g^+ \rightarrow \A_g$ is the uniformization. Thus the connected algebraic monodromy group of $\iota_{/\mathfrak{B},G}^{-1}(a)$ is $H$ for each $a \in \A_{g_Q-g_N}$. By Deligne's Theorem of the Fixed Part \cite[Corollaire~4.1.2]{DeligneHodgeII}, $H$ acts trivially on $V_Q/V_N \cong V_{2(g_Q-g_N)}$ because $(M/\mathfrak{B})|_{\iota_{/\mathfrak{B},G}^{-1}(a)}$ is isotrivial.%\footnote{It is known that the only connected subgroup of $\Sp_{2(g_Q-g_N)}$ which acts trivially on $V_{2(g_Q-g_N)}$ is the trivial group. Thus $H$ is the largest connected normal subgroup of $G_Q^{\mathrm{der}}$ which acts trivially on $V_Q/V_N$.}

The zero section of the abelian scheme $M \rightarrow M_G$ gives rise to a Levi decomposition $Q = V_Q \rtimes G_Q$. 
Let $N = V_N \rtimes H$. Then $N$ is a connected normal subgroup of $Q$ whose reductive part is semi-simple. Moreover the quotient morphism $p_N$ is precisely $\iota_{/\cB} \circ p_{\mathfrak{B}}$, namely the geometric interpretation \eqref{EqGeomMeaningQuotShimura} for this $N$ becomes
\begin{equation}\label{EqGeomMeaningQuotShimuraCritPf}
\xymatrix{
M \ar[r]|-{p_{\mathfrak{B}}} \ar[d]_{\pi|_M} \ar@/^1pc/[rr]|-{p_N} & M/\mathfrak{B} \ar[d]_{\pi_{/\cB}} \ar[r]_-{\iota_{/\mathfrak{B}}} \pullbackcorner & \iota(M/\mathfrak{B}) \ar[d]^{\pi'} \\
M_G \ar[r]^-{\mathrm{id}_{M_G}} & M_G \ar[r]^-{\iota_{/\mathfrak{B},G}} & \iota_{/\mathfrak{B},G}(M_G) .
}
\end{equation}
Note that \eqref{EqGeomMeaningQuotShimuraCrit} is precisely \eqref{EqGeomMeaningQuotShimuraCritPf} restricted to $B \subseteq M_G$.

If $N = 1$, then $p_N = \mathrm{id}_M$. This contradicts $\iota_{/\cB} \circ p_{\cB}$ being not generically finite. Hence $N$ is non-trivial.

Note that $X \subseteq M|_B$. Thus the inequality in $\Leftarrow$ of Theorem~\ref{ThmCriterionXDegenerateGeom} becomes $\dim p_N(X) < \dim X - g_N + t$. Now it suffices to apply $\Leftarrow$ of Proposition~\ref{ThmCriterionXDegenerate} to this $N$.

%% Section 9
\section{Summary on the generic rank of the Betti map}\label{SectionAppConjACZ}
In this section, we go back to the general setting and prove the main results concerning the Betti rank. First of all let us recall the setting up.

Let $S$ be an irreducible quasi-projective variety over $\C$, and let $\pi_S \colon \cA \rightarrow S$ be an abelian scheme of relative dimension $g\ge 1$. Recall the modular map \eqref{EqEmbedAbSchIntoUnivAbVar}
\[%\begin{equation}\label{EqEmbedAbSchIntoUnivAbVarBetti}
\xymatrix{
\cA \ar[r]^{\iota} \ar[d]_{\pi_S} \pullbackcorner & \mathfrak{A}_g \ar[d]^{\pi} \\
S \ar[r]^{\iota_S} & \A_g .
}
\]%\end{equation}
Let $b_{\Delta} \colon \cA_{\Delta} \rightarrow \mathbb{T}^{2g}$ be as defined in \eqref{EqBettiMapActualScheme} where $\Delta$ is some open subset of $S^{\mathrm{an}}$.

Let $X$ be a closed irreducible subvariety of $\cA$ dominant to $S$. Assume that $\iota|_X \colon X \rightarrow \iota(X)$ has relative dimension $r \ge 0$. Then for each $x \in X(\C)$, we have
\begin{equation}\label{EqRelDim}
\dim_x \iota|_X^{-1}(\iota(x)) \ge r.
\end{equation}

\subsection{Proof of Theorem~\ref{ThmZarClosedXdegIntro}} Fix $t \in \Z$. % As $X$ is a finite union of $X_r$'s, it suffices to prove that $X^{\mathrm{deg}}(t) \cap X_r$ is Zariski closed in $X_r$ for each $r$. So without loss of generality we may assume $X = X_r$ for some $r$.
We proved that $\iota(X)^{\mathrm{deg}}(t)$ is Zariski closed in $\iota(X)$ for each $t$ in Theorem~\ref{ThmDegeneracyLocusZarClosed}. Thus Theorem~\ref{ThmZarClosedXdegIntro} follows immediately from:
\begin{lemma}\label{LemmaDegLocusRelUnderMod}
$X^\mathrm{deg}(t) = 
\begin{cases}
 \iota^{-1}\left( \iota(X)^{\mathrm{deg}}(t+r) \right) & \text{if}\quad t+r \le 0 \text{ or } r=0 \\
 X & \text{otherwise} 
\end{cases}$.
\end{lemma}
\begin{proof}
First let us make the following observation: For any $Y \subseteq \cA$ irreducible, we have that $\langle Y \rangle_{\mathrm{sg}}$ is an irreducible component of $\iota^{-1}(\langle \iota(Y) \rangle_{\mathrm{sg}})$. So
\begin{equation}\label{EqSGMod}
\dim \langle \iota(Y) \rangle_{\mathrm{sg}}  - \dim \pi(\iota(Y)) = \dim \langle \iota(Y) \rangle_{\mathrm{sg}} - \dim \iota_S(\pi_S(Y)) = \dim \langle Y \rangle_{\mathrm{sg}} - \dim \pi_S(Y)
\end{equation}

\noindent\boxed{\text{Case } t+r \le 0\text{ or }r=0}
For $\subseteq$: Take $Y \subseteq X^{\mathrm{deg}}(t)$ irreducible such that $\dim Y > 0$ and 
\[
\dim \langle Y \rangle_{\mathrm{sg}} - \dim \pi_S(Y) < \dim Y + t.
\]
As $\iota|_X$ has relative dimension $r$, we have $\dim \iota(Y) \ge \dim Y - r$. 
%If $\dim \iota(Y) = 0$, then $r+t>0$.%$Y \subseteq X^{\mathrm{deg}}(-r+1)$.
So \eqref{EqSGMod} implies
\[
\dim \langle \iota(Y) \rangle_{\mathrm{sg}}  - \dim \pi(\iota(Y)) = \dim \langle Y \rangle_{\mathrm{sg}} - \dim \pi_S(Y) < \dim Y + t \le \dim \iota(Y) + (t+r).
\]
If $t+r \le 0$, then the inequality above further implies $0 < \dim \iota(Y)$. If $r=0$, then $\iota$ is generically finite and hence $\dim \iota(Y) = \dim Y >0$. Hence in either case, we have $\iota(Y) \subseteq \iota(X)^{\mathrm{deg}}(t+r)$. This proves $\subseteq$.

%So $\iota(X^\mathrm{deg}(t)) \subseteq \iota(X)^{\mathrm{deg}}(t+r)$.

For $\supseteq$: Conversely take $Y' \subseteq \iota(X)^{\mathrm{deg}}(t+r)$ irreducible such that $\dim Y' >0$ and
\[
\dim \langle Y' \rangle_{\mathrm{sg}} - \dim \pi(Y') < \dim Y' + t+r.
\]
Let $Y$ be an irreducible component of $\iota|_X^{-1}(Y')$. Then $Y' = \iota(Y)$. So $\dim Y \ge \dim Y' + r >0$ by \eqref{EqRelDim}. By \eqref{EqSGMod} we have then
\[
 \dim \langle Y \rangle_{\mathrm{sg}} - \dim \pi_S(Y) = \dim \langle Y' \rangle_{\mathrm{sg}} - \dim \pi(Y') < \dim Y' + t+r = \dim Y + t.
\]
Hence $Y \subseteq X^{\mathrm{deg}}(t)$. This proves $\supseteq$.

\noindent\boxed{\text{Case }t+r > 0\text{ and }r > 0} Let $x \in X(\C)$. Let $Y$ be the irreducible component of $\iota|_X^{-1}(\iota(x))$ which contains $x$. Then $\dim Y \ge r > 0$.

For $B:= \pi_S(\iota^{-1}(\iota(x))) = \iota_S^{-1}(\pi(\iota(x)))$, we have that $\iota^{-1}(\iota(x))$ is a constant section of $\cA|_B \rightarrow B$. Thus $\dim \pi_S(Y) = \dim Y \ge r$, and $\langle Y \rangle_{\mathrm{sg}} = Y$. Hence
\[
\dim \langle Y \rangle_{\mathrm{sg}} - \dim \pi_S(Y) = \dim Y - \dim Y = 0 < r -r+1 \le \dim Y + (-r+1).
\]
So $Y \subseteq X^{\mathrm{deg}}(-r+1)$. Thus $X \subseteq X^{\mathrm{deg}}(-r+1)$.

But $t \ge -r+1$ since $t+r>0$. So $X^{\mathrm{deg}}(-r+1) \subseteq X^{\mathrm{deg}}(t)$ by definition. So we are done.
\end{proof}

\subsection{Proof of Theorem~\ref{ThmDegLocusIntro}}
Fix $x \in X^{\mathrm{sm}}(\C)$. Then
\[
\mathrm{rank}_{\R} (\mathrm{d}b_{\Delta}|_X)_x < 2l \Leftrightarrow \mathrm{rank}_{\R} (\mathrm{d}b_{\Delta}|_{\iota(X)})_{\iota(x)} < 2l.
\]
Apply Proposition~\ref{PropDegLocus} to $\iota(X)$, then we have
\[
\mathrm{rank}_{\R} (\mathrm{d}b_{\Delta}|_{\iota(X)}) < 2l \Leftrightarrow \iota(X)^{\mathrm{deg}}(l- \dim \iota(X))\text{ is Zariski dense in }\iota(X).
\]
Since $l \le \dim \iota(X)$, we have
\[
\iota(x) \in \iota(X)^{\mathrm{deg}}(l- \dim \iota(X)) \Leftrightarrow x \in X^{\mathrm{deg}}(l - \dim X).
\]
by Lemma~\ref{LemmaDegLocusRelUnderMod}, because $\dim X = \dim \iota(X) + r$. We are done by the three equivalences above.

\subsection{Proof of Theorem~\ref{ThmCriterionDegIntro}}
%Part (ii) follows immediately from Theorem~\ref{ThmDegLocusIntro} and Theorem~\ref{ThmZarClosedXdegIntro}. So it remains to prove part (i).

The implication $\Leftarrow$ is clear: The generic rank of the Betti map on $(\iota_{/\cB} \circ p_{\cB})(X)$ has the trivial upper bound $2 \dim (\iota_{/\cB} \circ p_{\cB})(X)$, thus  $\mathrm{rank}_{\R} (\mathrm{d}b_{\Delta}|_X) \le 2g_{\cB} + 2 \dim (\iota_{/\cB} \circ p_{\cB})(X) < 2l$.\footnote{In other words, $\Leftarrow$ of Theorem~\ref{ThmCriterionXDegenerateGeom} is clearly true when $t \le 0$ if one translates the condition $X = X^{\mathrm{deg}}(t)$ into studying the generic rank of the Betti map. But for $t = 1$, we still need to go into our original proof.}

Let us prove $\Rightarrow$. If $l > \dim \iota(X)$, then $\mathrm{rank}_{\R} (\mathrm{d}b_{\Delta}|_X) < 2l$ always holds. Take $\cB$ to be the zero section of $\cA_X \rightarrow S$. Then $g_{\cB} =0$. Thus
\[
\dim (\iota_{/\cB} \circ p_{\cB})(X) = \dim \iota_{/\cB}(X) \le \dim \iota(X) < l = l-g_{\cB}.
\]
It suffices then to consider the case $l \le \dim \iota(X)$. 
By Theorem~\ref{ThmDegLocusIntro} and Theorem~\ref{ThmZarClosedXdegIntro}, we have
\[
\mathrm{rank}_{\R} (\mathrm{d}b_{\Delta}|_X) < 2l \Leftrightarrow \iota(X) = \iota(X)^{\mathrm{deg}}(l- \dim \iota(X)).
\]
Denote by $B = \iota_S(S) = \pi(\iota(X))$. Apply Theorem~\ref{ThmCriterionXDegenerateGeom} (and the discussion below) to $\iota(X)$ and $t = l - \dim \iota(X) \le 0$. We thus obtain an abelian subscheme $\cB'$ of $\cA_{\iota(X)} \rightarrow B$ (of relative dimension $g_{\cB'}$), such that $\dim (\iota_{/\cB'}\circ p_{\cB'})(\iota(X)) < \dim \iota(X) - g_{\cB'}$ for 
\[
\xymatrix{
\cA_{\iota(X)} \ar[r]^-{p_{\cB'}} \ar[d]_{\pi|_{\cA_{\iota(X)}}} & \cA_{\iota(X)}/\cB' \ar[d]_{\pi_{/\cB'}} \ar[r]^-{\iota_{/\cB'}} \pullbackcorner & \mathfrak{A}_{g_{\iota(X)} - g_{\cB'}} \ar[d] \\
B \ar[r]^-{\mathrm{id}_B} & B \ar[r]^-{\iota_{/\cB,G}} & \A_{g_{\iota(X)}-g_{\cB'}} .
}
\]
Observe $\cA_X=\iota^{-1}(\cA_{\iota(X)})$; so $g_X = g_{\iota(X)}$. Set $\cB = \iota^{-1}(\cB')$. Then $\cB$ is an abelian subscheme of $\cA_X \rightarrow S$; its relative dimension $g_{\cB}$ equals $g_{\cB'}$. The following diagram commutes
\[
\xymatrix{
\cA_X \ar[r]^-{p_{\cB}} \ar[d]_{\iota} & \cA_X/\cB \ar[d]_{\bar{\iota}} \ar[r]^-{\iota_{/\cB}}  & \mathfrak{A}_{g_X - g_{\cB}} \ar[d]^{=} \\
\cA_{\iota(X)} \ar[r]^-{p_{\cB'}}  & \cA_{\iota(X)}/\cB'  \ar[r]^-{\iota_{/\cB'}}  & \mathfrak{A}_{g_{\iota(X)} - g_{\cB'}} .
}
\]
Thus $(\iota_{/\cB} \circ p_{\cB})(X) = (\iota_{/\cB'}\circ p_{\cB'})(\iota(X))$. 
So $\dim (\iota_{/\cB} \circ p_{\cB})(X) = \dim (\iota_{/\cB'}\circ p_{\cB'})(\iota(X)) < \dim \iota(X) - g_{\cB'} \le \dim X - g_{\cB}$.

\subsection{A question of Andr\'{e}-Corvaja-Zannier}
In this subsection we apply the previous results to study a conjecture of Andr\'{e}-Corvaja-Zannier.

Let $\xi$ be a section of $\cA \rightarrow S$. Denote by $b_{\Delta}|_{\xi}$ the composite $b_{\Delta} \circ \xi \colon \Delta \rightarrow \cA_{\Delta} \rightarrow \mathbb{T}^{2g}$.

\begin{thm}\label{ThmACZ}
Assume that $\Z \xi$ is Zariski dense in $\cA$. Assume furthermore:
\begin{enumerate}
\item[(i)] Either $\cA/S$ is geometrically simple;
\item[(ii)] Or each Hodge generic curve $C \subseteq \iota_S(S)$ satisfies the following property: $\iota(\cA) \times_{\iota_S(S)} C = \pi^{-1}(C) \rightarrow C$ has no fixed part over any finite covering of $C$.\footnote{We say that a curve $C \subseteq \iota_S(S)$ is Hodge generic if the generic Mumford-Tate group of $C$ coincides with the generic Mumford-Tate group of $\iota_S(S)$.}
\end{enumerate}
Then $\max_{s \in \Delta} \mathrm{rank}_{\R} (\mathrm{d}b_{\Delta}|_{\xi})_s = 2 \min(\dim \iota(\xi(S)), g)$.
\end{thm}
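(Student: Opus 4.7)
The plan is to apply Theorem~\ref{ThmCriterionDeg} to $X := \xi(S)$ with $l := \min(\dim\varphi(X), g)$. The upper bound $\mathrm{rank}_{\R}(\mathrm{d}b_{\Delta}|_{X}) \le 2l$ is automatic because $b_{\Delta}$ factors through $\varphi$, so I only need to rule out strict inequality. The hypothesis that $\Z\xi$ is Zariski dense in $\cA$ implies that $X$ is contained in no translate of a proper abelian subscheme of $\cA\to S$ by a torsion section (otherwise $\Z X$ would lie in a finite union of torsion translates of a proper abelian subscheme, contradicting density). Thus, in the notation of Theorem~\ref{ThmCriterionDeg}, $\cA_{X} = \cA$ and $g_{X} = g$. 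Assuming $\mathrm{rank}_{\R}(\mathrm{d}b_{\Delta}|_{X}) < 2l$, Theorem~\ref{ThmCriterionDeg} supplies an abelian subscheme $\cB$ of $\cA\to S$ of relative dimension $g_{\cB}$ satisfying
\[
\dim(\iota_{/\cB}\circ p_{\cB})(X) < l - g_{\cB}. \quad (\star)
\]
The extremes fall at once: $\cB = 0$ would give $\dim\varphi(X) < l \le \dim\varphi(X)$, and $\cB = \cA$ would give $0 < l - g \le 0$. Thus $0 < g_{\cB} < g$.

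\smallskip

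Under hypothesis (i), geometric simplicity of $\cA/S$ forces each fiber $\cA_{s}$ to be a simple abelian variety, so every abelian subscheme of $\cA\to S$ has fibers in $\{0, \cA_{s}\}$, forcing $\cB \in \{0, \cA\}$, which contradicts $0 < g_{\cB} < g$.

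\smallskip

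Under hypothesis (ii), I would first establish the equivalent reformulation (cf.\ Remark~\ref{RmkACZ}): \emph{$\pi^{-1}(\varphi_{S}(S)) \to \varphi_{S}(S)$ has no fixed part over any finite covering}. This rests on the fact that the connected algebraic monodromy of any Hodge generic curve in $\varphi_{S}(S)$ equals that of $\varphi_{S}(S)$, combined with Deligne's theorem of the fixed part \cite[Corollaire~4.1.2]{DeligneHodgeII}. After a finite \'etale base change (which preserves the Betti rank), Deligne's semisimplicity theorem lets $\cB$ descend to an abelian subscheme $\mathfrak{B}$ of $\pi^{-1}(\varphi_{S}(S)) \to \varphi_{S}(S)$, so $\iota_{/\cB,S}$ factors as $\psi\circ\varphi_{S}$ where $\psi\colon \varphi_{S}(S) \to \A_{g-g_{\cB}}$ is the modular map of $\pi^{-1}(\varphi_{S}(S))/\mathfrak{B}$; projecting $(\star)$ to the base yields $\dim\psi(\varphi_{S}(S)) < l - g_{\cB}$. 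Writing $H$ for the connected algebraic monodromy of $\varphi_{S}(S)$ on $V_{2g}$, the reformulation of (ii) reads $V_{2g}^{H} = 0$; by semisimplicity, $V_{\cB} \subseteq V_{2g}$ is a direct summand and $(V_{2g}/V_{\cB})^{H} = 0$ as well.

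\smallskip

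The contradiction should come from the following rigidity: a generic fiber $F$ of $\psi$ is exactly the locus where $(\pi^{-1}(\varphi_{S}(S))/\mathfrak{B})|_{F}$ is isotrivial, equivalently where the algebraic monodromy acts trivially on $V_{2g}/V_{\cB}$, so that if any Hodge generic curve $C$ of $\varphi_{S}(S)$ sits inside such a fiber, its monodromy $H$ acts trivially on $V_{2g}/V_{\cB}$, forcing $V_{2g}/V_{\cB} = 0$ (by $(V_{2g}/V_{\cB})^{H} = 0$) and hence $\cB = \cA$, contradicting $g_{\cB} < g$. The hard part will be to convert this structural obstruction into a quantitative contradiction with $(\star)$: merely ``no fiber of $\psi$ contains a Hodge generic curve'' gives only $\dim\psi(\varphi_{S}(S)) \ge 1$, while $(\star)$ demands $\dim\psi(\varphi_{S}(S)) < l-g_{\cB}$. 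I expect the sharp bound to follow by combining the Shimura-theoretic description of $\psi$, whose fiber dimension is controlled by the kernel of $H \to \mathrm{GL}(V_{2g}/V_{\cB})$, with the Zariski density of $\Z(p_{\cB}\circ\xi) = p_{\cB}(\Z\xi)$ in $\cA/\cB$, which forces $(\iota_{/\cB}\circ p_{\cB}\circ\xi)(S)$ to attain full spread in $\iota_{/\cB}(\cA/\cB)$ both along the base and in the fibers.
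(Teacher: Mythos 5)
Your setup (applying Theorem~\ref{ThmCriterionDeg} with $l=\min(\dim\varphi(\xi(S)),g)$, using density of $\Z\xi$ to get $\cA_X=\cA$, and disposing of the extremes $\cB=0$, $\cB=\cA$) is sound, and your case (i) is correct and essentially the paper's argument. The problem is case (ii), where you yourself concede the argument is incomplete, and where two of your reductions are wrong. First, hypothesis (ii) is \emph{not} equivalent to ``$\pi^{-1}(\varphi_S(S))\to\varphi_S(S)$ has no fixed part over any finite covering'': by Remark~\ref{RmkACZ} it amounts to $V_{2g}^H=0$ for \emph{every non-trivial connected normal subgroup} $H$ of the monodromy group of $\varphi_S(S)$, which is strictly stronger than $V_{2g}^{G_B}=0$; the counterexample of Theorem~\ref{ThmACZIntro}(2)/Example~\ref{EgCounterexampleACZ} (a product base) shows the weaker ``no fixed part'' condition cannot suffice, so any proof resting on it must fail. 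Relatedly, the connected algebraic monodromy of a Hodge generic curve $C\subseteq\varphi_S(S)$ need not equal that of $\varphi_S(S)$ (again the product example: $\{s_1\}\times S_2$ can be Hodge generic with monodromy only in the second factor), so your step ``a Hodge generic curve in a fiber has monodromy $H$ acting trivially on $V_{2g}/V_{\cB}$, hence $V_{2g}/V_{\cB}=0$'' is unjustified.

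Second, and more importantly, the quantitative contradiction you say is missing is in fact immediate once hypothesis (ii) is used as stated, and no appeal to the density of $\Z(p_{\cB}\circ\xi)$ or to ``full spread'' is needed. If some Hodge generic fiber $\iota_{/\cB,G}^{-1}(a)$ had positive dimension, it would contain a Hodge generic curve $C$ of $\varphi_S(S)$; by construction of the modular map, $(\cA_X/\cB)|_C$ is isotrivial and non-trivial (since $g_{\cB}<g$), and splitting $p_{\cB}$ up to isogeny (Poincar\'e reducibility) produces a non-trivial fixed part of $\pi^{-1}(C)\to C$ over a finite covering, contradicting (ii) directly. Hence the Hodge generic fibers of $\iota_{/\cB,G}$ are finite, so $\iota_{/\cB,G}$, and therefore $\iota_{/\cB}$ (it is finite on each abelian fiber), is generically finite. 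Then
$\dim(\iota_{/\cB}\circ p_{\cB})(X)=\dim p_{\cB}(X)\ge\dim X-g_{\cB}\ge l-g_{\cB}$,
which contradicts your inequality $(\star)$. In other words, the correct lower bound is on $\dim(\iota_{/\cB}\circ p_{\cB})(X)$ itself via generic finiteness of $\iota_{/\cB}$, not on $\dim\psi(\varphi_S(S))$ (for which ``$\ge 1$'' is indeed useless). This is exactly how the paper closes case (ii), and the same mechanism also handles the zero-section subcase of (i).
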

\begin{rmk}\label{RmkACZ}
If $\cA/S$ is isotrivial, then hypothesis (ii) holds automatically since $\iota_S(S)$ is a point. In general hypothesis (ii) can be checked in the following way. Let $G_B$ be the connected algebraic monodromy group of $\iota_S(S)$. Then hypothesis (ii) is equivalent to: for any non-trivial connected normal subgroup $H$ of $G_B$, the only element of $V_{2g}$ stable under $H$ is $0$. It holds for example when $\cA \rightarrow S$ has no fixed part and $G_B$ is a simple group.
\end{rmk}

\begin{proof}
%Denote by $B = \iota_S(S)$ and $X = \iota(\xi(S))$. Denote for simplicity by $d = \dim X$. Since $\Z\xi$ is Zariski dense in $\cA$, no translate of a proper abelian subscheme of $\mathfrak{A}_g|_B := \pi^{-1}(B) \rightarrow B$ by a torsion section contains $X$.
Denote by $X = \xi(S)$ and $d = \dim \iota(X)$. Then $\max_{s \in \Delta} \mathrm{rank}_{\R} (\mathrm{d}b_{\Delta}|_{\xi})_s = \mathrm{rank}_{\R} (\mathrm{d}b_{\Delta}|_X)$.

Since $\Z\xi$ is Zariski dense in $\cA$, we have $\cA_X = \cA$ where $\cA_X$ is defined above Theorem~\ref{ThmCriterionDegIntro}.

Assume $\mathrm{rank}_{\R} (\mathrm{d}b_{\Delta}|_X) < 2 \min (d,g)$. Applying Theorem~\ref{ThmCriterionDegIntro}.(1) to $l = \min (d,g)$, we get an abelian subscheme $\cB$ of $\cA \rightarrow S$ (whose relative dimension we denote by $g_{\cB}$) such that %for the quotient abelian scheme $p_{\cB} \colon \cA \rightarrow \cA/\cB$ and the modular map $\iota_{/\cB} \colon \cA/\cB \rightarrow \mathfrak{A}_{g - g_{\cB}}$, we have
\begin{equation}\label{EqACZ}
\dim (\iota_{/\cB}\circ p_{\cB})(X) <  \min (d,g) - g_{\cB}
\end{equation}
for
\begin{equation}\label{EqDiagImporACZ}
\xymatrix{
\cA \ar[r]^-{p_{\cB}} \ar[d]_{\pi_S} & \cA/\cB \ar[r]^-{\iota_{/\cB}} \ar[d] \pullbackcorner & \mathfrak{A}_{g - g_{\cB}} \ar[d] \\
S \ar[r]^{\mathrm{id}_S} & S \ar[r]^-{\iota_{/\cB,S}} & \A_{g - g_{\cB}}.
}
\end{equation}

\noindent\framebox{Case (i)} By assumption $\cB$ is either the whole $\cA$ or the zero section of $\cA \rightarrow S$. In the former case, $g_{\cB} = g$. But then \eqref{EqACZ} cannot hold. In the latter case, $g_{\cB} = 0$, $p_{\cB} = \mathrm{id}_{\cA}$ and $\iota_{/\cB} = \iota$. So $\dim (\iota_{/\cB}\circ p_{\cB})(X) = \dim \iota(X) = d$. But then \eqref{EqACZ} cannot hold.%$\cA|_{\iota_{/\cB,S}^{-1}(a)} \rightarrow \iota_{/\cB,S}^{-1}(a)$ is an isotrivial abelian scheme for each $a \in \iota_{/\cB,S}(S)$. So $\dim \iota_{/\cB,G}^{-1}(a) = 0$; see the end of $\mathsection$\ref{SectionNotation}. But then $\iota_{\cB,G}$ is generically finite, and so it $\iota_{/\cB}$. So $\dim (\iota_{/\cB}\circ p_{\cB})(X) = \dim p_{\cB}(X) \ge \dim X - g_{\cB} = d - g_{\cB} \ge \min (d,g) - g_{\cB}$, contradiction to \eqref{EqACZ}.

\noindent\framebox{Case (ii)} If $\cB = \cA$, then $g = g_{\cB}$, and hence \eqref{EqACZ} cannot hold. So $\cA/\cB \rightarrow S$ is a non-trivial abelian scheme. Applying the modular map $\iota \colon \cA \rightarrow \mathfrak{A}_g$ (and $\iota_S \colon S \rightarrow \mathbb{A}_g$) to \eqref{EqDiagImporACZ}, we obtain
\[
\xymatrix{
\iota(\cA) \ar[r]^-{p'_{\cB}} \ar[d]_{\pi|_{\iota(\cA)}} & \iota(\cA)/\iota(\cB) \ar[r]^-{\iota'_{/\cB}} \ar[d] \pullbackcorner & \mathfrak{A}_{g - g_{\cB}} \ar[d] \\
\iota_S(S) \ar[r]^{\mathrm{id}} & \iota_S(S)  \ar[r]^-{\iota'_{/\cB,S}} & \A_{g - g_{\cB}} .
}
\]
It is not hard to check $(\iota'_{/\cB}\circ p'_{\cB})(\iota(X)) = (\iota_{/\cB} \circ p_{\cB})(X)$.

By construction of $\iota'_{/\cB,S}$, $(\iota(\cA)/\iota(\cB))|_{\iota_{/\cB,S}^{'-1}(a)} \rightarrow \iota_{/\cB,S}^{'-1}(a)$ is an isotrivial abelian scheme. Taking a splitting of $p'_{\cB}$ yields an isotrivial abelian subscheme of $\iota(\cA)|_{\iota_{/\cB,S}^{'-1}(a)} \rightarrow \iota_{/\cB,S}^{'-1}(a)$ which is non-trivial. For each $a$ Hodge generic in $\iota'_{/\cB,S}(\iota_(S))$, we have that $\iota_{/\cB,S}^{'-1}(a)$ is Hodge generic in $\iota_S(S)$. But then our hypothesis forces $\dim \iota_{/\cB,S}^{'-1}(a) = 0$. Hence $\iota'_{/\cB,S}$ is quasi-finite, and so is $\iota'_{/\cB}$. So $\dim (\iota'_{/\cB}\circ p'_{\cB})(\iota(X)) = \dim p'_{\cB}(\iota(X)) \ge \dim \iota(X) - g_{\cB} = d - g_{\cB} \ge \min(d,g) - g_{\cB}$. This contradicts \eqref{EqACZ} as $\dim (\iota'_{/\cB}\circ p'_{\cB})(\iota(X)) = \dim (\iota_{/\cB} \circ p_{\cB})(X)$.
\end{proof}

\begin{eg}\label{EgCounterexampleACZ}
The extra hypotheses (i) or (ii) in Theorem~\ref{ThmACZ} are necessary. We illustrate this with an example with $g = 4$. Consider $\mathfrak{A}_4 \rightarrow \A_4$. Now that $\mathfrak{A}_2 \times \mathfrak{A}_2 \rightarrow \A_2 \times \A_2$ is an abelian scheme of relative dimension $4$, it induces canonically the modular map
\[
\xymatrix{
\mathfrak{A}_2 \times \mathfrak{A}_2 \ar[r] \ar[d] \pullbackcorner & \mathfrak{A}_4 \ar[d] \\
\A_2 \times \A_2 \ar[r] & \A_4 .
}
\]
By abuse of notation we write $\mathfrak{A}_2 \times \mathfrak{A}_2$, resp. $\A_2 \times \A_2$, for the image of the morphism on the top, resp. on the bottom.

Let $S_1$ and $S_2$ be irreducible subvarieties of $\A_2$, then $S := S_1 \times S_2 \subseteq \A_2 \times \A_2$. The above diagram implies that $\mathfrak{A}_4|_S \rightarrow S$ is the product of the two abelian schemes $\mathfrak{A}_2|_{S_1} \rightarrow S_1$ and $\mathfrak{A}_2|_{S_2} \rightarrow S_2$.

Let $i \in \{1,2\}$. Let $\xi_i$ be a section of $\mathfrak{A}_2|_{S_i} \rightarrow S_i$ such that $\Z\xi_i$ is Zariski dense in $\mathfrak{A}_2|_{S_i}$ for $i \in \{1,2\}$. For the Betti map $b_{\Delta_i} \colon \mathfrak{A}_2|_{\Delta_i} \rightarrow \mathbb{T}^4$, where $\Delta_i \subseteq (S_i)^{\mathrm{an}}$ is an open subset, we have $\mathrm{rank}_{\R}(\mathrm{d}b_{\Delta_i}|_{\xi_i})_{s_i} \le 2\min(\dim S_i, 2)$ for all $s_i \in S_i(\C)$.

Now $\xi = (\xi_1,\xi_2)$ is a section of $\mathfrak{A}_4|_S \rightarrow S$ such that $\Z\xi$ is Zariski dense in $\mathfrak{A}_4|_S$. 

Take $S_1 = \A_2$ and $S_2$ to be a curve in $\A_2$ such that $\mathfrak{A}_2|_{S_2}$ has no fixed part (over any finite \'{e}tale covering of $S_2$), then $\mathfrak{A}_4|_S \rightarrow S$ has no fixed part (over any finite \'{e}tale covering of $S$) and $\dim S = 4 = g$. Let $\Delta = \Delta_1 \times \Delta_2$. Then the Betti map $b_{\Delta}$ equals $(b_{\Delta_1},b_{\Delta_2})$. So $\mathrm{rank}_{\R}(\mathrm{d}b_{\Delta}|_{\xi})_s \le \max_{s_1 \in \Delta_1}\mathrm{rank}_{\R}(\mathrm{d}b_{\Delta_1}|_{\xi_1})_{s_1} + \max_{s_2 \in \Delta_2} \mathrm{rank}_{\R}(\mathrm{d}b_{\Delta_2}|_{\xi_2})_{s_2} \le 2 (2 + \dim S_2) = 6 < 8 = 2\min(\dim \xi(S) ,g)$ for any $s \in \Delta$.
\end{eg}

\section{Application to fibered powers}\label{SectionAppFaltingsZhang}
The goal of this section is to prove (a generalized form of) Theorem~\ref{ThmFaltingsZhangBettiMapIntro}.

Recall the setting up. Let $S$ be an irreducible subvariety over $\C$ and let $\pi_S \colon \cA \rightarrow S$ be an abelian scheme of relative dimension $g_0 \ge 1$. We have the following modular map \eqref{EqEmbedAbSchIntoUnivAbVar}
\[
\xymatrix{
\cA \ar[r]^-{\iota} \ar[d]_{\pi_S} \pullbackcorner & \mathfrak{A}_{g_0} \ar[d] \\
S \ar[r]^-{\iota_S} & \A_{g_0}.
}
\]
Let $X$ be a closed irreducible subvariety of $\cA$ such that $\pi_S(X) = S$.% Set $r = \dim X - \dim \iota(X)$.

For any integer $m \ge 1$, set $\cA^{[m]} = \cA\times_S \ldots \times_S \cA$ ($m$-copies), $X^{[m]} = X \times_S \ldots \times_S X$ ($m$-copies)  
and by $b^{[m]}_{\Delta} = (b_{\Delta}, \ldots, b_{\Delta}) \colon \cA^{[m]}_{\Delta} \rightarrow \mathbb{T}^{2mg}$.

The modular map $\iota$ induces a morphism $\iota^{[m]} \colon \cA^{[m]} \rightarrow \mathfrak{A}_{mg_0}$. Set $X^{[m]}_{\iota} = \iota^{[m]}(X^{[m]})$. It is a closed subvariety of $\iota(X)\times_{\iota_S(S)} \ldots \times_{\iota_S(S)}\iota(X)$, which may be proper.

Let $\mathscr{D}_m^{\cA} \colon \cA^{[m+1]} \rightarrow \cA^{[m]}$ be the $m$th Faltings-Zhang map fiberwise defined by $(P_0, P_1, \ldots, P_m) \mapsto (P_1 - P_0, \ldots, P_m - P_0)$. In the particular case where $\cA \rightarrow S$ is $\mathfrak{A}_{g_0} \rightarrow \mathbb{A}_{g_0}$, abbreviate $\mathscr{D}_m = \mathscr{D}_m^{\mathfrak{A}_{g_0}}$. Then $\iota^{[m]} \circ \mathscr{D}_m^{\cA} = \mathscr{D}_m \circ \iota^{[m+1]}$.

\begin{thm}\label{ThmFaltingsZhang}
Assume that $X$ satisfies the following conditions: 
\begin{enumerate}
\item[(a)] we have $\dim X > \dim S$;
\item[(b)] for each $s \in S(\C)$, $X_s$ generates $\cA_s$;%the subvariety $X$ is not contained in the translate of any proper abelian subscheme of $\cA \rightarrow S$ by a torsion section;
\item[(c)] 
we have $X + \cA' \not\subseteq X$ for any non-isotrivial abelian subscheme $\cA'$ of $\cA \rightarrow S$.
\end{enumerate}
Then for each $t \ge 0$, we have the following:
\begin{enumerate}
\item[(i)] $\mathrm{rank}_{\R}(\mathrm{d}b_{\Delta}^{[m]}|_{X^{[m]}}) \ge 2(\dim X_{\iota}^{[m]} - t)$ for all $m \ge \dim S - t$.
\item[(ii)] $\mathrm{rank}_{\R}(\mathrm{d}b_{\Delta}^{[m]}|_{\mathscr{D}_m^{\cA}(X^{[m+1]})}) \ge 2(\dim \mathscr{D}_m(X_{\iota}^{[m+1]}) - t)$ for all $m \ge \dim X - t$.
\end{enumerate}
\end{thm}

%\noindent\textit{Proof.} 
Before proceeding to the proof, let us fix some notation. 
Set $d = \dim X - \dim S > 0$. 
 For any $m \ge 1$, we have $\dim X^{[m]} - \dim S = md$. %and $\dim X^{[m]} - \dim X^{[m]}_{\iota} = mr$. 
 In particular, $\dim X^{[m]}_{\iota} \le \dim X^{[m]} = md + \dim S$.

For notation let $\cA_{X^{[m]}}$ be the translate of an abelian subscheme of $\cA^{[m]} \rightarrow S$ by a torsion section which contains $X^{[m]}$, minimal for this property. Hypothesis (b) then implies %$\cA_{X^{[1]}} = \cA$. Hence by hypotheses (a) and (b) on $X$, we have 
$\cA_{X^{[m]}} = \cA^{[m]}$ for all $m \ge 1$.% So $\cA_{X^{[m]}_{\iota}} = \mathfrak{A}_{mg_0}|_B := \mathfrak{A}_{mg_0} \times_{\A_{g_0}}B$ for all $m \ge 1$.% Define $\cA_{X^{[m]}}$ similarly, then $\cA_{X^{[m]}} = \mathfrak{A}_g|_B$.

%To ease notation let $g = mg_0$.

\begin{proof}[Proof of Theorem~\ref{ThmFaltingsZhang}.(i)]
Suppose $\mathrm{rank}_{\R}(\mathrm{d}b_{\Delta}^{[m]}|_{X^{[m]}}) < 2(\dim X_{\iota}^{[m]} - t)$. Then by Theorem~\ref{ThmCriterionDegIntro}, there exists an abelian subscheme $\cB$ of $\cA^{[m]} \rightarrow S$ (whose relative dimension we denote by $g_{\cB}$) such that 
\begin{equation}\label{EqXDegenerateAppCurve}
\dim (\iota_{/\cB}\circ p_{\cB})(X^{[m]}) < \dim X^{[m]}_{\iota} - t- g_{\cB}
\end{equation}
for the following diagram.
\begin{equation}\label{EqGeomMeaningQuotShimuraCritAppFZ}
\xymatrix{
\cA^{[m]} \ar[r]^-{p_{\cB}} \ar[d]_{\pi_S} & \cA^{[m]}/\cB \ar[d]_{\pi_{/\cB}} \ar[r]^-{\iota_{/\cB}} \pullbackcorner & \mathfrak{A}_{mg_0-g_{\cB}} \ar[d]^{\pi'} \\
S \ar[r]^-{\mathrm{id}_S} & S \ar[r]^-{\iota_{/\cB,S}} & \A_{mg_0-g_{\cB}}
}
\end{equation}

The following fact will be proved in Appendix~\ref{SectionAppendixLinearAlgebra}. There exists an isogeny
\begin{equation}\label{EqNewDecomposition}
%\underbrace{\cA \times_S \ldots \times_S \cA}_{m\text{-copies}}
\rho \colon \cA^{[m]} \rightarrow \cA^{[m]}%\cong \cA \times_S \ldots \times_S \cA \qquad (m\text{-copies})
\end{equation}
such that $\rho(\cB) = \cB_1 \times_S \ldots \times_S \cB_m$ for some abelian subschemes $\cB_1,\ldots,\cB_m$ of $\cA$. %This new decomposition of $\mathfrak{A}_g|_{\mathbb{A}_{g_0}}$ induces a new decomposition of $\cA^{[m]} \cong \cA \times_S \ldots \times_S \cA$ ($m$-copies) up to isogeny. 
%By hypothesis (i) on $X$, we still have $X^{[m]} = X\times_S \ldots \times_S X$ under this new decomposition.
Moreover we may assume that $\rho$ satisfies the following property: $q_1 \circ \rho = \rho_1 \circ q_1$ for some isogeny $\rho_1 \colon \cA\rightarrow \cA$, where $q_1 \colon \cA^{[m]} \rightarrow \cA$ is the projection to the first factor.

%Hence $\iota_{\cB}$ is not generically finite. Equivalently $\iota_{/\cB,G}^{-1}(a)$ is of positive dimension for each $a \in \iota_{/\cB,G}(B)$.

\begin{lemma}\label{LemmaNonIsotrivial}
$\cB_i \rightarrow S$ is non-isotrivial for each $i \in \{1,\ldots, m\}$.
\end{lemma}

%Assume that exactly $k$ among the $\cB_i$'s are $\cA$. Denote by $\Sigma$ the set of all the $\cB_i$'s which are not $\cA$. Then $\#\Sigma = m-k$. 
%Hence
%\begin{equation}\label{EqgNLowerBound}
%g_{\cB} = k g_0 + \sum_{\cB_i \in \Sigma} (\dim \cB_i - \dim S).% + \frac{1}{2} \sum_{W_j \in \Sigma_2} \dim W_j.
%\end{equation}
We will see that Lemma~\ref{LemmaNonIsotrivial} implies the following naive lower bound for $\dim (\iota_{/\cB} \circ p_{\cB})(X^{[m]})$ 
\begin{equation}\label{EqpNXprimeLowerBound}
\dim (\iota_{/\cB} \circ p_{\cB})(X^{[m]}) \ge m(d+1)-g_{\cB}.%(m-k)d - \sum_{\cB_i \in \Sigma}(\dim \cB_i - \dim S -1).
\end{equation}
Let us finish the proof by assuming Lemma~\ref{LemmaNonIsotrivial} and hence \eqref{EqpNXprimeLowerBound}. %Lemma~\ref{LemmaNonIsotrivial} implies that $\cA \rightarrow S$ is non-isotrivial. So $d < g_0$ by hypothesis (iii) on $X$. By %\eqref{EqgNLowerBound} and 
%\eqref{EqpNXprimeLowerBound} we have
We have
\begin{align*}
& g_{\cB} - (\dim X^{[m]}_{\iota} - \dim (\iota_{/\cB} \circ p_{\cB})(X^{[m]})) \\
= & g_{\cB} + \dim (\iota_{/\cB} \circ p_{\cB})(X^{[m]}) - \dim X^{[m]}_{\iota} \\
%\ge & k g_0 + \sum_{\cB_i \in \Sigma} (\dim \cB_i - \dim S) + (m-k) d - \sum_{\cB_i \in \Sigma} (\dim \cB_i - \dim S -1) - (md + \dim S) \\
%= & k g_0 + (m-k) d + (m-k) - md - \dim S \\
%= & k (g_0 - d - 1) + m - \dim S  \\
\ge & m(d+1) - (md+\dim S) \\
\ge & m - \dim S.  %\qquad \text{ by hypothesis (iii) on $X$ (applied to $\cA'= \cA$)}.
\end{align*}

This contradicts \eqref{EqXDegenerateAppCurve} for $m \ge \dim S - t$. So we are done.
\end{proof}

\begin{proof}[Proof of Lemma~\ref{LemmaNonIsotrivial}]
Suppose $\cB_{i_0} \rightarrow S$ is isotrivial for some $i_0$. 
Note that $(\cA^{[m]}/\cB)|_{\iota_{/\cB,S}^{-1}(a)} \rightarrow \iota_{/\cB,S}^{-1}(a)$ is an isotrivial abelian scheme for each $a$. Hence $\cA/\cB_{i_0}$, being an abelian subscheme of $\cA^{[m]}/\cB$ via the decomposition \eqref{EqNewDecomposition}, is isotrivial when restricted to ${\iota_{/\cB,S}^{-1}(a)}$. Thus $\cA|_{\iota_{/\cB,S}^{-1}(a)} \rightarrow \iota_{/\cB,S}^{-1}(a)$ is isotrivial. So by the discussion of the end of $\mathsection$\ref{SectionNotation}, $\dim \iota_S(\iota_{/\cB,S}^{-1}(a)) = 0$  where $\iota_S \colon S \rightarrow \mathbb{A}_{g_0}$ is the modular map for $\cA \rightarrow S$. 

Applying the modular map $\iota \colon \cA \rightarrow \mathfrak{A}_{g_0}$ (and $\iota_S \colon S \rightarrow \mathbb{A}_{g_0}$) to \eqref{EqGeomMeaningQuotShimuraCritAppFZ}, we obtain (denote by $\cA^{[m]}_{\iota} = \iota^{[m]}(\cA^{[m]})$) the following diagram.
\begin{equation}\label{EqGeomMeaningQuotShimuraCritAppFZMod}
\xymatrix{
\cA^{[m]}_{\iota} \ar[r]^-{p_{\iota^{[m]}(\cB)}} \ar[d]_{\pi} & \cA^{[m]}_{\iota} /\iota^{[m]}(\cB) \ar[d]_{\pi_{/\iota^{[m]}(\cB)}} \ar[r]^-{\iota'_{/\cB}} \pullbackcorner & \mathfrak{A}_{mg_0-g_{\cB}} \ar[d]^{\pi'} \\
\iota_S(S) \ar[r]^-{\mathrm{id}} & \iota_S(S) \ar[r]^-{\iota'_{/\cB,S}} & \A_{mg_0-g_{\cB}}
}
\end{equation}
The previous paragraph implies that $\dim (\iota'_{/\cB,S})^{-1}(a') =0$ for each $a'$. Thus $\iota'_{/\cB,S}$ is generically finite, and so is $\iota'_{/\cB}$.

It is not hard to check $(\iota_{/\cB}\circ p_{\cB})(X^{[m]}) = (\iota'_{/\cB} \circ p_{\iota^{[m]}(\cB)})(X^{[m]}_{\iota})$. So
\[
\dim (\iota_{/\cB}\circ p_{\cB})(X^{[m]}) = \dim (\iota'_{/\cB} \circ p_{\iota^{[m]}(\cB)})(X^{[m]}_{\iota}) = \dim p_{\iota^{[m]}(\cB)}(X^{[m]}_{\iota}) \ge \dim X^{[m]}_{\iota} - g_{\cB},
\]
where the last inequality holds because $\iota^{[m]}(\cB) \rightarrow \iota_S(S)$ has relative dimension $g_{\cB}$. This contradicts \eqref{EqXDegenerateAppCurve} since $t \ge 0$.
\end{proof}

\begin{proof}[Proof of \eqref{EqpNXprimeLowerBound}]
Use the notation in \eqref{EqGeomMeaningQuotShimuraCritAppFZ}. Take a point $s \in S$ such that $(\iota_{/\cB} \circ p_{\cB})(X^{[m]}) \rightarrow \pi'((\iota_{/\cB} \circ p_{\cB})(X^{[m]}))$ is flat over $s' := \iota_{/\cB,S}(s)$. Then $\dim (\iota_{/\cB} \circ p_{\cB})(X^{[m]}) = \dim \pi'((\iota_{/\cB} \circ p_{\cB})(X^{[m]})) + \dim ((\pi')^{-1}(s') \cap (\iota_{/\cB} \circ p_{\cB})(X^{[m]})) \ge \dim ((\pi')^{-1}(s') \cap (\iota_{/\cB} \circ p_{\cB})(X^{[m]}))$. So it suffices to prove the lower bound for $\dim ((\pi')^{-1}(s') \cap (\iota_{/\cB} \circ p_{\cB})(X^{[m]}))$.

On the other hand $(\iota_{/\cB} \circ p_{\cB})(\cA^{[m]}_s \cap X^{[m]}) \subseteq (\pi')^{-1}(s') \cap (\iota_{/\cB} \circ p_{\cB})(X^{[m]}))$ by the commutative diagram \eqref{EqGeomMeaningQuotShimuraCritAppFZ}. Hence it suffices to prove the lower bound for $\dim (\iota_{/\cB} \circ p_{\cB})(\cA^{[m]}_s \cap X^{[m]})$. But $p_{\cB}(\cA^{[m]}_s \cap X^{[m]})$ is contained in one fiber of $\pi_{/\cB}$. So $\iota_{\cB}|_{p_{\cB}(\cA^{[m]}_s \cap X^{[m]})}$ is an isomorphism. So
\[
\dim (\iota_{/\cB} \circ p_{\cB})(\cA^{[m]}_s \cap X^{[m]}) =  \dim p_{\cB} (\cA^{[m]}_s \cap X^{[m]}). 
\]
Thus it suffices to prove the lower bound for $\dim p_{\cB}(\cA^{[m]}_s \cap X^{[m]}) = \dim p_{\cB}(X_s^m)$.

The isogeny $\rho$ in \eqref{EqNewDecomposition} gives an isogeny $\cA_s^m \rightarrow \cA_s^m$, which we still denote by $\rho$ by abuse of notation. Set for simplicity $B = \cB_s$, $B_i = (\cB_i)_s$ for $i \in \{1,\ldots,m\}$. Denote by $p_i \colon \cA_s \rightarrow \cA_s/B_i$ the quotient. %and $\Sigma'$ the set of all $B_i$'s such that $B_i \not= \cA_s$. It is not hard to check $\#\Sigma' = \#\Sigma = m-k$. 
Then by the last paragraph, to prove \eqref{EqpNXprimeLowerBound} it suffices to prove
\begin{equation}\label{EqpNXprimeLowerBoundFiber}
\dim (p_1,\cdots,p_m)(\rho(X_s^m)) \ge m(d+1) - \sum_{i=1}^m \dim B_i.%d\#\Sigma' - \sum_{B_i \in \Sigma'}(\dim B_i - 1).
\end{equation} 

By Hypothesis (c) on $X$, for a very general $s$ we have $X_s + \cA'_s \not\subseteq X_s$ for any non-isotrivial abelian subscheme $\cA'$ of $\cA$. In particular, $X_s + B_i \not\subseteq X_s$ by Lemma~\ref{LemmaNonIsotrivial}. Thus
\begin{equation}\label{EqDescreaseByQuotient}
\dim p_i(X_s) \ge d - (\dim B_i -1).
\end{equation}

Let us prove \eqref{EqpNXprimeLowerBoundFiber} for such an $s$ by induction on $m$. For $m = 1$, this is precisely \eqref{EqDescreaseByQuotient}. For general $m \ge 2$, denote by $q_1 \colon \cA_s^m \rightarrow \cA_s$ the projection to the first factor. Recall that $q_1 \circ \rho = \rho_1 \circ q_1$ for some isogeny $\rho_1 \colon \cA_s \rightarrow \cA_s$; see below \eqref{EqNewDecomposition}. For each $z \in \rho_1(X_s)$, we have
\begin{align*}
\rho(X_s^m) \cap q_1^{-1}(z) & = \rho \left(X_s^m \cap \rho^{-1}(q_1^{-1}(z)) \right)  = \rho \left(X_s^m \cap (q_1\circ \rho)^{-1}(z) \right) \\
& = \rho \left(X_s^m \cap (\rho_1\circ q_1)^{-1}(z) \right) = \rho \left(X_s^m \cap q_1^{-1}(\rho_1^{-1}(z)) \right) \\
& = \rho \left(X_s^m \cap \{\rho_1^{-1}(z)\} \times \cA_s^{m-1} \right) = \bigcup_{z' \in X_s \cap \rho_1^{-1}(z)} \rho(\{z'\} \times X_s^{m-1}).
\end{align*}
Thus each irreducible component of a non-empty fiber of $q_1|_{\rho(X_s^m)}$ is a translate of $\rho'_1(X_s^{m-1})$ for the isogeny $\rho'_1 \colon \cA_s^{m-1} \rightarrow \cA_s^{m-1}$ obtained from $\rho|_{\{0\}\times \cA_s^{m-1}}$.

We wish to apply the induction hypothesis to $\rho'_1(X_s^{m-1})$ and the quotient morphism $(p_2,\cdots,p_m)$. However we cannot directly do this because $\rho'_1$ may not satisfy the extra property that $q_1 \circ \rho'_1$ and $q_1$ differ from an isogeny $\cA_s \rightarrow \cA_s$.

To solve this problem, let $\cB_1^{\perp}$ be the neutral component of $\rho^{-1}(\epsilon_S\times_S \cB_2 \times_S \ldots \times_S \cB_m) \subseteq \rho^{-1}(\epsilon_S \times_S \cA^{[m-1]})$, where $\epsilon_S$ is the zero section of $\cA \rightarrow S$. Apply \eqref{EqNewDecomposition} to $\cB_1^\perp \subseteq \cA^{[m-1]}$. We thus obtain an isogeny
\[
\rho' \colon \cA^{[m-1]} \rightarrow \cA^{[m-1]}
\]
such that $\rho'(\cB_1^\perp) = \cB'_2 \times_S \ldots \times_S \cB'_m$ and that $q_1 \circ \rho' = \rho_2 \circ q_1$ for some isogeny $\rho_2 \colon \cA \rightarrow \cA$. As $\rho'_1 \colon \cA_s^{m-1} \rightarrow \cA_s^{m-1}$ is obtained from $\rho|_{\{0\}\times \cA_s^{m-1}}$, we have
\[
\dim(p_2,\cdots,p_m)(\rho_1'(X_s^{m-1})) = \dim(p_2',\cdots,p_m')(\rho'(X_s^{m-1})),
\]
where $p_i' \colon \cA_s \rightarrow \cA_s/(\cB'_i)_s$ is the quotient morphism.

Lemma~\ref{LemmaNonIsotrivial} applies to $\cB'_i$ since $\cA/\cB'_i$ is still an abelian subscheme of $\cA^{[m]}/\cB$. So $\cB'_i \rightarrow S$ is non-isotrivial. Hence we can apply the induction hypothesis to $\rho'(X_s^{m-1})$ and $(p_2',\cdots,p_m')$. So
\[
\dim(p_2',\cdots,p_m')(\rho'(X_s^{m-1})) \ge (m-1)(d+1) - \sum_{i=2}^m \dim (\cB'_i)_s = (m-1)(d+1) - \sum_{i=2}^m \dim B_i.
%\begin{cases}
%d\#\Sigma' - \sum_{B_i \in \Sigma'}(\dim B_i -1) \qquad \text{if }B_1 \not\in \Sigma' \\
%d(\#\Sigma'-1) - \sum_{B_i \in \Sigma',~i\not= 1}(\dim B_i -1) \qquad \text{if }B_1 \in \Sigma'.
%\end{cases}
\]
Denote by $\bar{q}_1 \colon \cA_s/B_1 \times \cdots \cA_s/B_m \rightarrow \cA_s/B_1$ the projection to the first factor. 
%If $B_1 \not\in \Sigma'$, then we are done as each fiber of $\bar{q}_1|_{(p_1,\cdots,p_m)(\rho(X_s^m))}$ has dimension $\ge d\#\Sigma' - \sum_{B_i \in \Sigma'}(\dim B_i -1)$. If $B_1 \in \Sigma'$, then 
Then we have $\bar{q}_1((p_1,\cdots,p_m)(\rho(X_s^m))) = p_1(q_1(\rho(X_s^m))) = p_1(q_1\circ \rho(X_s^m)) = p_1(\rho_1\circ q_1(X_s^m)) = p_1(\rho_1(X_s))$. But $\dim p_1(\rho_1(X_s)) \ge  d - (\dim B_1 - 1)$ by the same argument for \eqref{EqDescreaseByQuotient}. Hence by the fiber dimension theorem we have
\[
\dim (p_1,\cdots,p_m)(\rho(X_s^m)) \ge \left(d - (\dim B_1 - 1)\right) + \left((m-1)(d+1) - \sum_{i=2}^m \dim B_i\right) = m(d+1) - \sum_{i=1}^m\dim B_i.
\]
This finishes the proof.
\end{proof}

\begin{proof}[Proof of Theorem~\ref{ThmFaltingsZhang}.(ii)] 
The proof of part (ii) is similar. Let us sketch it. 

%Let $j_1 \colon \cA \rightarrow \cA^{[m]}$ be fiberwise defined by $P \mapsto (P,0,\ldots,0)$. 
We reduce (ii) to the case where $X$ contains the zero section of $\pi_{\cA} \colon \cA \rightarrow S$. Indeed, up to replacing $S$ by a Zariski open subset (which does not change the generic Betti rank in question) we can take a section $\sigma \colon S \rightarrow X$ of $\pi_{\cA}|_X$. Then $\mathscr{D}_m^{\cA}(X^{[m+1]}) = \mathscr{D}_m^{\cA}((X-\sigma(S))^{[m+1]})$ for all $m \ge 1$. It suffices then to replace $X$ by $X-\sigma(S)$.% - (j_1 \circ \sigma)(S)$ for $\sigma$ as in (2). Then $\mathrm{rank}_{\R}(\mathrm{d}b_{\Delta}^{[m]}|_Y) = \mathrm{rank}_{\R}(\mathrm{d}b_{\Delta}^{[m]}|_{\mathscr{D}_m^{\cA}(X^{[m+1]})})$ since $\sigma$ is a constant section.

Set $Y := \mathscr{D}_m^{\cA}(X^{[m+1]})$. We have $Y \supseteq X^{[m]}$ because $X^{[m]} = \mathscr{D}_m^{\cA}(0_S \times_S X \times_S \cdots \times_S X) \subseteq \mathscr{D}_m^{\cA}(X^{[m+1]})$. %$X^{[m]} + (j_1 \circ \sigma)(S) = \mathscr{D}_m^{\cA}(\sigma(S) \times_S X \times_S \cdots \times_S X) \subseteq \mathscr{D}_m^{\cA}(X^{[m+1]})$. 
Let $\cA_Y$ be the translate of an abelian subscheme of $\cA^{[m]} \rightarrow S$ by a torsion section which contains $Y$, minimal for this property. Then $\cA_Y = \cA^{[m]}$ since $\cA_{X^{[m]}} = \cA$.

Suppose $\mathrm{rank}_{\R}(\mathrm{d}b_{\Delta}^{[m]}|_Y) < 2(\dim \iota^{[m]}(Y) - t)$. Applying Theorem~\ref{ThmCriterionDegIntro}.(1) to $Y$, we get an abelian subscheme $\cB$ of $\cA^{[m]} \rightarrow S$ (whose relative dimension we denote by $g_{\cB}$) such that for the map $\iota_{/\cB} \circ p_{\cB}$ as in \eqref{EqGeomMeaningQuotShimuraCritAppFZ}, we have $\dim (\iota_{/\cB}\circ p_{\cB})(Y) < \dim \iota^{[m]}(Y) - t - g_{\cB}$. This $\cB$ is different from the one in the proof of part (i).

Under a possibly new decomposition (up to isogeny) 
%\begin{equation}\label{EqNewDecomposition2}
%\underbrace{\cA \times_S \ldots \times_S \cA}_{m\text{-copies}}
%\cA^{[m]} \cong \cA \times_S \ldots \times_S \cA \qquad (m\text{-copies}), 
%\end{equation}
$\cA^{[m]} \cong \cA \times_S \ldots \times_S \cA$ ($m$-copies),
we may write $\cB = \cB_1 \times_S \ldots \times_S \cB_m$ for some abelian subschemes $\cB_1,\ldots,\cB_m$ of $\cA$. One can show that $\cB_i \rightarrow S$ is non-isotrivial for each $i \in \{1,\ldots, m\}$: indeed it suffices to take a verbalized copy of the proof of Lemma~\ref{LemmaNonIsotrivial} with $X^{[m]}$ replaced by $Y$ and $X^{[m]}_{\iota}$ replace by $\iota^{[m]}(Y)$. 

Then \eqref{EqpNXprimeLowerBound} holds for this new $\cB$. Thus% If $\iota_{/\cB}$ is generically finite, then $\dim (\iota_{/\cB}\circ p_{\cB})(Y) = \dim p_{\cB}(Y) \ge \dim Y - g_{\cB}$. Thus we get a contradiction to our hypothesis $t \le 0$. Hence $\iota_{/\cB}$ is not generically finite.  But then
\begin{align*}
 & g_{\cB} + t + \dim (\iota_{/\cB}\circ p_{\cB})(Y) - \dim Y \\
\ge & g_{\cB} + t + \dim (\iota_{/\cB}\circ p_{\cB})(Y) - ((m+1)d + \dim S) \\
\ge & g_{\cB} + t + \dim (\iota_{/\cB}\circ p_{\cB})(X^{[m]}) - (m+1)d - \dim S  \quad \text{ since $X^{[m]} \subseteq Y$} \\
\ge & t + m - d - \dim S  \quad \text{ by \eqref{EqpNXprimeLowerBound}} \\
= & t + m - \dim X 
\end{align*}
This contradicts $\dim (\iota_{/\cB}\circ p_{\cB})(Y) < \dim \iota^{[m]}(Y) - t - g_{\cB}$ when $m \ge \dim X - t$.

So $\mathrm{rank}_{\R}(\mathrm{d}b_{\Delta}^{[m]}|_Y) \ge 2(\dim \iota^{[m]}(Y) - t)$ when $m \ge \dim X - t$. So the conclusion follows because  $\mathrm{rank}_{\R}(\mathrm{d}b_{\Delta}^{[m]}|_{\mathscr{D}_m^{\cA}(X^{[m+1]})}) = \mathrm{rank}_{\R}(\mathrm{d}b_{\Delta}^{[m]}|_Y)$ and
%\[
$\dim \mathscr{D}_m(X^{[m+1]}_{\iota}) = \dim (\iota^{[m]} \circ \mathscr{D}_m^{\cA})(X^{[m+1]}) = \dim \iota^{[m]}(Y)$.
%\]
\end{proof}

%% Section 11
\section{Link with relative Manin-Mumford}\label{SectionRelativeMM}
In this section, we discuss about the relative Manin-Mumford conjecture. It is closely related to $X^{\mathrm{deg}}(1)$.

Let $S$ be an irreducible variety over $\C$, and let $\pi_S \colon \cA \rightarrow S$ be an abelian scheme of relative dimension $g \ge 1$. 
Denote by $\cA_{\mathrm{tor}}$ the set of points $x \in \cA(\C)$ such that $[N]x$ lies in the zero section of $\cA \rightarrow S$ for some integer $N$. In other words $x$ is a torsion point in its fiber.

Let $X$ be a closed irreducible subvariety such that $\pi_S(X) = S$. Denote by $\cA_X$ the translate of an abelian subscheme of $\cA \rightarrow S$ by a torsion section which contains $X$, minimal for this property.

\begin{namedconj}[Relative Manin-Mumford]\label{ConjMM}
If $(X \cap \cA_{\mathrm{tor}})^{\Zar} = X$, then $\codim_{\cA_X}(X) \le \dim S$.
\end{namedconj}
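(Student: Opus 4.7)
The plan is to deduce Conjecture~\ref{ConjMM} from Conjecture~\ref{ConjRelativeMMEasyIntro} (the content of Proposition~\ref{RelativeMMEquivIntro}, i.e.\ Proposition~\ref{RelativeMMEquiv}) by combining standard reductions, the criterion of Theorem~\ref{ThmCriterionXDegenerateGeom} applied with $t=1$, and an induction on the relative dimension $g$.

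\emph{Reductions.} First I would replace $\cA$ by $\cA_X$, so that $\cA_X = \cA$; the desired inequality $\codim_{\cA_X}(X) \le \dim S$ then becomes $\dim X \ge g$. Next, the modular map $\varphi \colon \cA \to \mathfrak{A}_g$ in \eqref{EqEmbedAbSchIntoUnivAbVar} is fiberwise an isogeny, hence preserves relative dimensions and sends $\cA_{\mathrm{tor}}$ into $(\mathfrak{A}_g)_{\mathrm{tor}}$ with finite fibers; a direct minimality check gives $\varphi(\cA_X) = \cA_{\varphi(X)}$. We may therefore replace $(\cA,S,X)$ by $(\mathfrak{A}_g|_B,B,\varphi(X))$ with $B = \varphi_S(S) \subseteq \mathbb{A}_g$, preserving density of torsion and the property $\cA_X = \cA$. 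Finally, a standard spreading-out / specialisation argument descends the whole configuration from $\mathbb{C}$ to $\bar{\mathbb{Q}}$ (spread $X \subseteq \mathfrak{A}_g|_B$ as a family over some $\bar{\mathbb{Q}}$-base $T$, observe that the loci where $\dim X$ drops or where the density-of-torsion fails are proper, and pick $t_0 \in T(\bar{\mathbb{Q}})$ outside them).

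\emph{Applying the easier conjecture and extracting $\cB$.} In this reduced setting, Conjecture~\ref{ConjRelativeMMEasyIntro} gives $X^{\mathrm{deg}}(1) = X$. Since $\cA_X = \mathfrak{A}_g|_B$, the hypothesis of Theorem~\ref{ThmCriterionXDegenerateGeom} with $t=1$ is satisfied, so I obtain an abelian subscheme $\cB$ of $\mathfrak{A}_g|_B \to B$ of relative dimension $g_{\cB}$, fitting in the diagram \eqref{EqGeomMeaningQuotShimuraCrit}, such that $\iota_{/\cB} \circ p_{\cB}$ is not generically finite and
\[
\dim (\iota_{/\cB} \circ p_{\cB})(X) < \dim X - g_{\cB} + 1.
\]
Here the modular map $\iota$ of $\mathfrak{A}_g|_B$ into $\mathfrak{A}_g$ is the tautological inclusion, hence generically finite. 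Consequently the failure of $\iota_{/\cB} \circ p_{\cB}$ to be generically finite must come from $p_{\cB}$, which forces $g_{\cB} \ge 1$.

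\emph{Induction on $g$.} Set $Y = (\iota_{/\cB} \circ p_{\cB})(X) \subseteq \mathfrak{A}_{g-g_{\cB}}|_{B'}$ with $B' = \iota_{/\cB,B}(B)$. The torsion points of $X$ map to torsion points of $Y$, so $(Y \cap (\mathfrak{A}_{g-g_{\cB}})_{\mathrm{tor}})^{\Zar} = Y$. Minimality of $\cA_X$ forces $\cA_Y = \mathfrak{A}_{g-g_{\cB}}|_{B'}$: otherwise its preimage under $\iota_{/\cB} \circ p_{\cB}$ would be a proper translate of an abelian subscheme of $\cA$ containing $X$, contradicting $\cA_X = \cA$. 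Thus the relative dimension of $\cA_Y$ is $g - g_{\cB} < g$, and the induction hypothesis yields $\codim_{\cA_Y}(Y) \le \dim B'$, i.e.\ $\dim Y \ge g - g_{\cB}$. Combining with the criterion bound $\dim X \ge \dim Y + g_{\cB}$ gives $\dim X \ge g$, as required. The base case $g = 1$ is the theorem of Masser--Zannier on relative Manin--Mumford for elliptic pencils (and more generally for abelian schemes over a curve), and $g = 0$ is trivial.

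\emph{Main obstacle.} The principal difficulty lies in the specialisation step from $\C$ to $\bar{\Q}$: one must check simultaneously that $\dim X$, density of torsion, and the minimality property $\cA_X = \cA$ all persist at a suitably generic $\bar{\Q}$-point; this is exactly the kind of passage that the paper flags as ``highly non-trivial'' in \cite{CorvajaMasserZannier2018}. Once the reductions are set up, the criterion at $t=1$ makes the induction formal.
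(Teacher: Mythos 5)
Your proposal does not (and could not) prove the conjecture outright; it deduces it from Conjecture~\ref{ConjMMPre}, i.e.\ it re-proves Proposition~\ref{RelativeMMEquiv}, which is exactly what the paper does. The overall strategy is the same as the paper's: specialize to $\bar{\Q}$, reduce to $\cA_X=\cA$ and then to the universal family so that $\cA_{\varphi(X)}=\mathfrak{A}_g|_B$, apply the easier conjecture plus Theorem~\ref{ThmDegeneracyLocusZarClosed} and the criterion of Theorem~\ref{ThmCriterionXDegenerateGeom} at $t=1$, and induct. Where you genuinely diverge is the induction scheme: the paper inducts on the pair $(\dim S,g)$ and splits into two cases according to whether $\iota_{/\cB,G}$ is generically finite, and in the generically finite case it first derives $g_{\cB}>0$ and $\varphi(X)+\cB=\varphi(X)$ and passes to the quotient by $\cB$ before invoking the $g$-induction; you instead induct on $g$ alone, observing that $g_{\cB}\ge 1$ always holds in the reduced setting and applying the inductive hypothesis directly to $Y=\overline{(\iota_{/\cB}\circ p_{\cB})(X)}$ after checking that $\cA_Y$ is the full family $\mathfrak{A}_{g-g_{\cB}}|_{\overline{\iota_{/\cB,G}(B)}}$ (your minimality transfer is correct, since preimages under $p_{\cB}$ and under the fiberwise-isomorphic $\iota_{/\cB}$ of torsion translates of proper abelian subschemes are again such). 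This is a genuine streamlining: since $\cA_X=\mathfrak{A}_g|_B$, the case $g_{\cB}=0$ would force $p_{\cB}=\mathrm{id}$ and $\iota_{/\cB}$ to be the tautological inclusion $\mathfrak{A}_g|_B\hookrightarrow\mathfrak{A}_g$, hence generically finite, contradicting the criterion; so $g-g_{\cB}<g$ and the $\dim S$ component of the paper's induction, together with its case analysis and stabilizer argument, becomes unnecessary. What the paper's bookkeeping buys in exchange is the sharper ``more precise'' statement of Proposition~\ref{RelativeMMEquiv} (tracking $g'\le g$, $\dim X'\le \dim X$, $\dim\pi'(X')\le\dim S$), which your argument in fact also respects but does not record.

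Two small repairs. First, your justification of $g_{\cB}\ge 1$ is phrased as if $\iota_{/\cB}$ itself were always the tautological inclusion and hence generically finite; that is false once $g_{\cB}\ge 1$ (the quotient $(\mathfrak{A}_g|_B)/\cB\rightarrow B$ may, for instance, be isotrivial, so that $\iota_{/\cB,G}(B)$ is a point), and the non-generic-finiteness of $\iota_{/\cB}\circ p_{\cB}$ can perfectly well ``come from'' $\iota_{/\cB}$. The correct argument is the contrapositive stated above, and since you only use the conclusion $g_{\cB}\ge 1$, nothing downstream is damaged. Second, the base case $g=1$ is not Masser--Zannier's theorem (their result is for $\dim S=1$ and arbitrary $g$); for $g=1$ over an arbitrary base the statement is elementary, and indeed your own step already handles it (then $g_{\cB}=1$, the image lies in $\mathfrak{A}_0$, and the criterion gives $\dim X\ge 1$ with no inductive input), so the external citation should be dropped. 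As for the $\C\rightarrow\bar{\Q}$ specialization you flag as the main obstacle: the paper likewise dispatches it as a ``standard specialization argument,'' so on this point you are at the same level of detail as the paper, and the inaccurate aside that $\varphi$ has finite fibers is immaterial since only $\dim X\ge\dim\varphi(X)$ is used.
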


\begin{conj}\label{ConjMMPre}
Assume $S$, $\pi_S$, and $X$ are defined over $\bar{\Q}$. If $(X \cap \cA_{\mathrm{tor}})^{\Zar} = X$, then $X^{\mathrm{deg}}(1)$ is Zariski dense in $X$.
\end{conj}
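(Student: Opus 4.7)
The plan is to attack Conjecture~\ref{ConjMMPre} by the Pila-Zannier strategy outlined in $\mathsection$\ref{SubsectionACZHistory}, for which the conjecture is expected to provide the ``last step''. After reducing via the modular map \eqref{EqEmbedAbSchIntoUnivAbVar} to the universal case $\cA = \mathfrak{A}_g|_S$, $S \subseteq \A_g$ (still with $S$ and $X$ defined over $\bar{\Q}$---this algebraicity is essential and reflects the well-known ``$\bar{\Q}$ to $\C$'' difficulty emphasized in the introduction), I would lift $X$ via the uniformization $\mathbf{u} \colon \cX_{2g,\mathrm{a}}^+ \rightarrow \mathfrak{A}_g$ to $\widetilde{X} := \mathbf{u}^{-1}(X) \cap \mathcal{F}$ for a suitable fundamental set $\mathcal{F}$. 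This $\widetilde{X}$ is definable in the o-minimal structure $\R_{\mathrm{an},\exp}$, and under the coordinates $\cX_{2g,\mathrm{a}}^+ = \R^{2g} \times \mathfrak{H}_g^+$ of \eqref{EqComplexStrOfX2g} a torsion point of order $N$ lifts to a point whose $\R^{2g}$-coordinate is rational with denominator $N$ (and whose $\mathfrak{H}_g^+$-coordinate is algebraic when the underlying moduli point is).

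The first step would be a uniform Galois orbit lower bound: for every torsion point $x \in X \cap \cA_{\mathrm{tor}}$ of order $N$ lying over $s \in S(\bar{\Q})$, one wants $|\mathrm{Gal}(\bar{\Q}/K)\cdot x| \gg N^{\delta}$ for some fixed $\delta>0$, with constants depending only on $X$ (imposing a Northcott-type height restriction on $s$ if necessary). Fiberwise this follows from Masser-W\"ustholz or Gaudron-R\'emond isogeny estimates; the fundamental difficulty is making the bound uniform across the moduli direction. This is precisely the analogue of the ``large Galois orbit'' hypothesis in the Zilber-Pink program and is the main obstacle of the approach.

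The second step is then standard in spirit. Combine this lower bound with the polynomial upper bound on the number of torsion points of order $\le N$ in $X$ (via Northcott for the $\bar{\Q}$-points of bounded height in $\pi_S(X\cap \cA_{\mathrm{tor}})$ plus the trivial fiberwise estimate) to see that $\widetilde{X}$ contains at least $T^{c}$ ``rational-type'' points of height $\le T$ for some $c>0$. The block form of the Pila-Wilkie counting theorem then forces $\widetilde{X}$ to contain positive-dimensional semi-algebraic subsets $\widetilde{W}$, each passing through the preimage of a torsion point. Applying the weak Ax-Schanuel Theorem~\ref{ThmWAS} to a complex analytic irreducible component of $\widetilde{W}$ and invoking Corollary~\ref{CorsgBiZar}, one sees that $Y := (\mathbf{u}(\widetilde{W}))^{\mathrm{Zar}} \subseteq X$ satisfies
\[
\dim \langle Y \rangle_{\mathrm{sg}} - \dim \pi_S(Y) = \dim Y^{\mathrm{biZar}} - \dim \pi(Y)^{\mathrm{biZar}} \le \dim Y,
\]
so $Y \subseteq X^{\mathrm{deg}}(1)$ by Definition~\ref{DefnDegeneracyLocus}.

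Since torsion points are Zariski dense in $X$ by hypothesis, the construction would produce positive-dimensional irreducible subvarieties contained in $X^{\mathrm{deg}}(1)$ passing through a Zariski dense set of points of $X$; combined with Theorem~\ref{ThmDegeneracyLocusZarClosed} this gives $X^{\mathrm{deg}}(1) = X$ as desired. The hard part is Step~1: the uniform Galois-orbit lower bound is currently out of reach for abelian schemes over bases of dimension $\ge 2$ in general, which is exactly why Conjecture~\ref{ConjMMPre} is stated as a conjecture rather than a theorem and is the essential remaining input needed to derive the full relative Manin-Mumford conjecture from the $X^{\mathrm{deg}}(1)$-theory developed in the paper.
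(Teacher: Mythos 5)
There is a fundamental mismatch here: Conjecture~\ref{ConjMMPre} is stated in the paper as a \emph{conjecture}, and the paper offers no proof of it --- its contribution is only Proposition~\ref{RelativeMMEquiv}, which shows that this conjecture \emph{implies} relative Manin--Mumford. Your text is likewise not a proof but a strategy outline, and you say so yourself: the entire arithmetic input, a uniform lower bound $|\mathrm{Gal}(\bar{\Q}/K)\cdot x| \gg N^{\delta}$ for torsion points $x$ of order $N$ on fibers varying over a base of arbitrary dimension, is exactly the missing ingredient, and it is not supplied by Masser--W\"ustholz or Gaudron--R\'emond in any uniform-over-moduli form. So the ``proof'' has a genuine, acknowledged gap at its core; one cannot certify the statement this way, and the paper itself deliberately isolates the statement as a conjecture precisely because no such argument is currently available.

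Beyond that central gap, two further steps of your sketch would need real work even conditionally. First, the counting step is not as stated: over a positive-dimensional base there may be infinitely many torsion points of a fixed order $N$ on $X$ (the locus of $s$ with an $N$-torsion point on $X_s$ can be positive dimensional), and the Zariski-dense set of torsion points guaranteed by the hypothesis may lie over base points of unbounded height, so a Northcott-type restriction cannot simply be ``imposed if necessary'' without losing the density you need; the standard fix (counting Betti coordinates fiberwise, or a fibered/semi-rational Pila--Wilkie) has to be set up carefully. Second, a Pila--Wilkie block is only semi-algebraic; it has no complex analytic irreducible components, so before invoking Theorem~\ref{ThmWAS} you must pass to a complex analytic component of (the complex Zariski closure of the block) $\cap\, \mathbf{u}^{-1}(X)$ and control its dimension --- a standard but non-trivial manoeuvre, and only after it does the inequality $\dim \langle Y \rangle_{\mathrm{sg}} - \dim \pi_S(Y) \le \dim Y$, hence $Y \subseteq X^{\mathrm{deg}}(1)$, follow as you indicate. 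In short: the functional-transcendence half of your plan is consistent with the machinery the paper develops (and with its stated expectation that Conjecture~\ref{ConjMMPre} serves as the last step of a Pila--Zannier argument), but the statement remains unproven both in the paper and in your proposal.
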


The goal is to reduce the Relative Manin-Mumford Conjecture to Conjecture~\ref{ConjMMPre}.

\begin{prop}\label{RelativeMMEquiv}
Conjecture~\ref{ConjMMPre} implies the Relative Manin-Mumford Conjecture. More precisely for $X \subseteq \cA \rightarrow S$ such that $(X \cap \cA_{\mathrm{tor}})^{\Zar} = X$, we have $\codim_{\cA_X}(X) \le \dim S$ if Conjecture~\ref{ConjMMPre} holds for any irreducible subvariety $X' \subseteq \mathfrak{A}_{g'}$ defined over $\bar{\Q}$ with $1 \le g' \le g$, $\dim X' \le \dim X$ and $\dim \pi'(X') \le \dim S$.\footnote{Here $\pi' \colon \mathfrak{A}_{g'} \rightarrow \A_{g'}$ is the universal abelian variety.}
\end{prop}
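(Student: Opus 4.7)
The plan is to reduce the relative Manin-Mumford statement to its image inside the universal abelian variety $\mathfrak{A}_g$ via the modular map, descend to a $\bar{\Q}$-defined setting by spreading out, apply Conjecture~\ref{ConjMMPre} together with the criterion Theorem~\ref{ThmCriterionXDegenerateGeom} for $t=1$, and close by induction on $g$. As a first step I would reduce to $\cA=\cA_X$ (by working inside $\cA_X$, which is itself an abelian scheme) and then pass to $X_0:=\varphi(X)\subseteq\mathfrak{A}_g$ over $B_0:=\varphi_S(S)\subseteq\A_g$. Since $\varphi$ is fiberwise an isogeny, torsion is preserved, so $X_0\cap\mathfrak{A}_{g,\mathrm{tor}}$ is Zariski dense in $X_0$. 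The minimality $\cA_X=\cA$, combined with the fact that $\varphi$-preimages of torsion-translates of abelian subschemes of $\mathfrak{A}_g|_{B_0}\to B_0$ are again torsion-translates of abelian subschemes of $\cA\to S$, yields $\cA_{X_0}=\mathfrak{A}_g|_{B_0}$; moreover $\dim B_0\le\dim S$ and $\dim X_0\le\dim X$, so the target inequality $\codim_{\cA_X}(X)\le\dim S$ becomes $\dim X_0\ge g$.

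Next I would descend to $\bar{\Q}$: write the field of definition of $(B_0,X_0)$ as the function field of an irreducible $\bar{\Q}$-variety $V$, spread out into a family over $V$, and specialize at a very general $\bar{\Q}$-point $v\in V(\bar{\Q})$. A standard specialization argument (exploiting countability of torsion, rigidity of abelian subschemes, and the fact that Zariski closures commute with very general specialization) shows that the fiber $X_0'$ of the spread-out family is defined over $\bar{\Q}$, has dense torsion, satisfies $\cA_{X_0'}=\mathfrak{A}_g|_{B_0'}$ for $B_0'$ the corresponding fiber, and has the same dimension as $X_0$. Conjecture~\ref{ConjMMPre} then applies to $X_0'$ (the parameters $g'=g$, $\dim X_0'\le\dim X$, $\dim\pi(X_0')\le\dim S$ lie in the assumed range), and in view of Theorem~\ref{ThmDegeneracyLocusZarClosed} it gives $(X_0')^{\mathrm{deg}}(1)=X_0'$; transferring back along the family yields $X_0^{\mathrm{deg}}(1)=X_0$. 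Applying Theorem~\ref{ThmCriterionXDegenerateGeom} with $t=1$ to $(\mathfrak{A}_g|_{B_0},X_0)$ then produces an abelian subscheme $\cB\subseteq\mathfrak{A}_g|_{B_0}$ of relative dimension $g_{\cB}\ge 1$ (the case $g_{\cB}=0$ is excluded because then $\iota_{/\cB}\circ p_{\cB}$ would be generically finite) such that $Y:=(\iota_{/\cB}\circ p_{\cB})(X_0)\subseteq\mathfrak{A}_{g-g_{\cB}}$ has dense torsion (a fiberwise group homomorphism preserves torsion) and $\dim Y\le\dim X_0-g_{\cB}$.

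To conclude I would induct on $g$. The base $g=1$ is immediate, since $\cA_{X_0}=\mathfrak{A}_1|_{B_0}$ has positive relative dimension, forcing $\dim X_0\ge 1$. For the inductive step I would first establish the minimality claim $\cA_Y=\mathfrak{A}_{g-g_{\cB}}|_{\pi(Y)}$: any containment $Y\subseteq\tau+\cC$ with $\cC$ a proper abelian subscheme of $\mathfrak{A}_{g-g_{\cB}}|_{\pi(Y)}$ would pull back via the fiberwise surjection $\iota_{/\cB}\circ p_{\cB}$ (with kernel $\cB$ of relative dimension $g_{\cB}$) to a containment of $X_0$ in a torsion-translate of an abelian subscheme of $\mathfrak{A}_g|_{B_0}$ of relative dimension $g_{\cB}+(\dim\cC-\dim\pi(Y))<g$, contradicting $\cA_{X_0}=\mathfrak{A}_g|_{B_0}$. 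If $g-g_{\cB}=0$ then $Y$ is a point and $\dim X_0\ge g_{\cB}=g$ directly; if $g-g_{\cB}\ge 1$, the induction hypothesis applied to $Y$ (whose parameters $g'=g-g_{\cB}<g$, $\dim Y\le\dim X$, $\dim\pi(Y)\le\dim S$ remain in the assumed range of Conjecture~\ref{ConjMMPre}) yields $\codim_{\cA_Y}Y\le\dim\pi(Y)$, hence $\dim Y\ge g-g_{\cB}$, and combining with $\dim X_0\ge\dim Y+g_{\cB}$ gives $\dim X_0\ge g$ as required. The hard part will be the spreading-out step: verifying that torsion-density and the minimality $\cA_{X_0}=\mathfrak{A}_g|_{B_0}$ are preserved by a very general $\bar{\Q}$-specialization; once this is in hand, the induction is a clean consequence of the criterion together with the minimality lemma.
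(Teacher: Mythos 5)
Your overall architecture is sound, but one step as written is a genuine gap: the ``transferring back along the family yields $X_0^{\mathrm{deg}}(1)=X_0$'' move. Knowing $(X_0')^{\mathrm{deg}}(1)=X_0'$ for a very general $\bar{\Q}$-specialization does not obviously propagate to the generic ($\C$-)fiber: the witnessing abelian subscheme $\cB'$ over $B_0'$ produced by Theorem~\ref{ThmCriterionXDegenerateGeom} lives only over the special fiber and need not extend to $B_0$ (special fibers can carry strictly more abelian subschemes and a larger degeneracy locus), so going back would require an extra argument. Worse, the detour is both unnecessary and self-defeating: if you then continue with $X_0$ over $\C$, the image $Y=(\iota_{/\cB}\circ p_{\cB})(X_0)$ at the next level of your induction is a priori only defined over $\C$, so Conjecture~\ref{ConjMMPre} cannot be invoked for it without specializing again. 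The repair is easy and is what the paper does: specialize once, at the level of the relative Manin--Mumford statement itself, so that all data are over $\bar{\Q}$ from the start --- or, in your setup, simply keep working with $X_0'$ after specializing (you already record $\dim X_0'=\dim X_0$ and the target is just $\dim X_0\ge g$), so every object in the recursion stays over $\bar{\Q}$. One more small point: in your base case $g=1$, fullness of $\cA_{X_0}$ alone does not force $\dim X_0\ge 1$ (a non-torsion point of an elliptic fiber already has $\cA_{X_0}=\mathfrak{A}_1|_{B_0}$); you need the torsion-density hypothesis there as well.

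Once repaired, your route is genuinely different from the paper's and arguably cleaner. You maintain the fullness $\cA_{X_0}=\mathfrak{A}_g|_{B_0}$ through the recursion via your preimage/minimality lemma, deduce $g_{\cB}\ge 1$ directly from the ``not generically finite'' clause of Theorem~\ref{ThmCriterionXDegenerateGeom} (for $\cB=0$ the map $\iota_{/\cB}\circ p_{\cB}$ is essentially the inclusion $\mathfrak{A}_g|_{B_0}\hookrightarrow\mathfrak{A}_g$, hence generically finite), and then run a single induction on $g$ applied to $Y$, with no case distinction. The paper instead runs a double induction on $(\dim S,g)$ and splits according to whether $\iota_{/\cB,G}$ is generically finite: when the base dimension drops it recurses on $\dim S$ through the codimension bookkeeping \eqref{EqRelativeMMLong}, and only in the complementary case does it argue $g_{\cB}>0$ and quotient by $\cB$ to recurse on $g$. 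What your version buys is a shorter argument with one induction parameter; what it needs in exchange are exactly the two facts you isolate --- torsion-density passes to the image $Y$ under the fiberwise homomorphisms, and $\cA_Y=\mathfrak{A}_{g-g_{\cB}}|_{\pi(Y)}$ because preimages of torsion translates of abelian subschemes under fiberwise surjections/isogenies are again torsion translates --- both of which are correct and are in any case used implicitly by the paper when it applies its induction hypothesis to the images.
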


\begin{proof}
First let us note that by standard specialization argument, if relative Manin-Mumford holds for \textbf{\textit{all}} $S$, $\pi_S$ and $X$ defined over $\bar{\Q}$, then it holds for all $S$, $\pi_S$ and $X$ defined over $\C$.

Let $X \subseteq \cA \rightarrow S$ be as in the relative Manin-Mumford conjecture \textit{which is defined over $\bar{\Q}$}. Note that $\cA_X \rightarrow S$ itself is an abelian scheme. Replacing $\cA \rightarrow S$ by $\cA_X \rightarrow S$, we may assume $\cA_X = \cA$. It suffices to prove $\codim_{\cA}(X) \le \dim S$.

Let us prove it by induction on $(\dim S, g)$, upward on both parameters. When $\dim S = 0$ it follows from the classical Manin-Mumford conjecture (first proved by Raynaud \cite{RaynaudCourbes-sur-une} and then re-proved by many others). The conclusion for the case $g = 0$ is easily true.

%When $\dim S = 1$, let $\cB$ be the largest abelian subscheme of $\cA \rightarrow S$ such that $X + \cB = X$. Let $q \colon \cA \rightarrow \cA/\cB$ be the quotient abelian scheme. Then $\codim_{\cA}(X) \le \dim S$ is equivalent to $\codim_{\cA/\cB}(q(X)) \le \dim S$. By choice of $\cB$, no non-trivial abelian subscheme of $\cA/\cB \rightarrow S$ stabilizes $q(X)$. Moreover $(X\cap \cA_{\mathrm{tor}})^{\Zar} = X$ implies $((q(X)\cap (\cA/\cB)_{\mathrm{tor}})^{\Zar} = q(X)$. Conjecture~\ref{ConjMMPre} then implies that $q(X)^{\mathrm{deg}}(1)$ is Zariski dense in $q(X)$. Hence we are done by Proposition~\ref{PropRelativeMMBaseCurve} applied to $q(X) \subseteq \cA/\cB$.

In general, consider the modular map \eqref{EqEmbedAbSchIntoUnivAbVar}
\[%\begin{equation}\label{EqEmbedAbSchIntoUnivAbVarMM}
\xymatrix{
\cA \ar[r]^{\iota} \ar[d]_{\pi_S} \pullbackcorner & \mathfrak{A}_g \ar[d]^{\pi} \\
S \ar[r]^{\iota_S} & \A_g .
}
\]%\end{equation}
Denote by $B = \iota_S(S)$. Then $\iota(\cA) = \pi^{-1}(B) =: \mathfrak{A}_g|_B$. Now $(X\cap \cA_{\mathrm{tor}})^{\Zar} = X$ implies $(\iota(X) \cap \iota(\cA)_{\mathrm{tor}})^{\Zar} = \iota(X)$. Applying Conjecture~\ref{ConjMMPre} to $\iota(X) \subseteq \iota(\cA)$,\footnote{Note that $\iota$ and $\iota_S$ are defined over $\bar{\Q}$, $\dim \iota(X) \le \dim X$ and $\dim \pi(\iota(X)) = \dim \iota_S(S) \le \dim S$.} we get that $\iota(X)^{\mathrm{deg}}(1)$ is Zariski dense in $\iota(X)$. Hence by Theorem~\ref{ThmDegeneracyLocusZarClosed} we have $\iota(X)^{\mathrm{deg}}(1) = \iota(X)$. Applying Theorem~\ref{ThmCriterionXDegenerateGeom} to $t = 1$ and $\iota(X)$, we get an abelian subscheme $\cB$ of $\mathfrak{A}_g|_B \rightarrow B$ (whose relative dimension we denote by $g_{\cB}$) such that for %the quotient abelian scheme $p_{\cB} \colon \mathfrak{A}_g|_B \rightarrow (\mathfrak{A}_g|_B)/\cB$ and the modular map $\iota_{/\cB} \colon (\mathfrak{A}_g|_B)/\cB \rightarrow \mathfrak{A}_{g - g_{\cB}}$ (see \eqref{EqGeomMeaningQuotShimuraCrit})
\[\xymatrix{
\mathfrak{A}_g|_B \ar[r]^-{p_{\cB}} \ar[d]_{\pi|_{\mathfrak{A}_g|_B}} & (\mathfrak{A}_g|_B)/\cB \ar[d]_{\pi_{/\cB}} \ar[r]^-{\iota_{/\cB}} \pullbackcorner & \mathfrak{A}_{g - g_{\cB}} \ar[d]^{\pi'} \\
B \ar[r]^-{\mathrm{id}_B} & B \ar[r]^-{\iota_{/\cB,G}} & \A_{g-g_{\cB}} ,
}
\]
we have  that $\iota_{/\cB} \circ p_{\cB}$ is not generically finite and 
\begin{equation}\label{EqRelativeMM}
\dim (\iota_{/\cB} \circ p_{\cB})(\iota(X)) < \dim \iota(X) - g_{\cB} + 1.
\end{equation}
Hence we have (denote by $\mathfrak{A}_{g - g_{\cB}}|_{\iota_{/\cB,G}(B)} = (\pi')^{-1}(\iota_{/\cB.G}(B))$)
\begin{align*}
& \scalebox{0.85}{ $(\codim_{\cA}(X) - \dim S) - \left(\codim_{(\mathfrak{A}_{g - g_{\cB}}|_{\iota_{/\cB,G}(B)})}(\iota_{/\cB}\circ p_{\cB})(\iota(X)) - \dim \iota_{/\cB,G}(B) \right)$ }  \\
= & \scalebox{0.85}{ $(\dim \cA - \dim S - \dim X) - \left(\dim (\mathfrak{A}_{g - g_{\cB}}|_{\iota_{/\cB,G}(B)}) - \dim \iota_{/\cB,G}(B) - \dim (\iota_{/\cB}\circ p_{\cB})(\iota(X)) \right)$ }  \\
= & g - \dim X - (g-g_{\cB}) + \dim (\iota_{/\cB}\circ p_{\cB})(\iota(X))  \\
= & - \dim X + \dim (\iota_{/\cB}\circ p_{\cB})(\iota(X)) + g_{\cB} \nonumber \\
\le & - \dim \iota(X) + \dim (\iota_{/\cB}\circ p_{\cB})(\iota(X)) + g_{\cB} \le 0,
\end{align*}
and so
\begin{equation}\label{EqRelativeMMLong}
\codim_{(\mathfrak{A}_{g - g_{\cB}}|_{\iota_{/\cB,G}(B)})}(\iota_{/\cB}\circ p_{\cB})(\iota(X)) \le \dim \iota_{/\cB,G}(B) \Longrightarrow \codim_{\cA}(X) \le \dim S.
\end{equation}

If $\dim \iota_{/\cB,G}(B) < \dim B$, then $\dim \iota_{/\cB,G}(B) < \dim S$ since $\dim B = \dim \iota_S(S) \le \dim S$. Now we can apply the induction hypothesis on $\dim S$ to get
\[
\codim_{(\mathfrak{A}_{g - g_{\cB}}|_{\iota_{/\cB,G}(B)})}(\iota_{/\cB}\circ p_{\cB})(\iota(X)) \le \dim \iota_{/\cB,G}(B).
\]
So $\codim_{\cA}(X) \le \dim S$ by \eqref{EqRelativeMMLong}.

If $\dim \iota_{/\cB,G}(B) = \dim B$, then $\iota_{/\cB,G}$ is generically finite. So $\iota_{/\cB}$ is generically finite. But $\iota_{/\cB} \circ p_{\cB}$ is not generically finite. So $g_{\cB} > 0$. Moreover
\[
\dim (\iota_{/\cB} \circ p_{\cB})(\iota(X)) = \dim p_{\cB}(\iota(X)) \ge \dim \iota(X) - g_{\cB},
\]
and hence $\dim p_{\cB}(\iota(X)) = \dim \iota(X) - g_{\cB}$ by \eqref{EqRelativeMM}. So $\iota(X) + \cB = \iota(X)$.

Now the assumption $(X\cap \cA_{\mathrm{tor}})^{\Zar} = X$ implies
\[
\big( p_{\cB}(\iota(X)) \cap ((\mathfrak{A}_g|_B)/\cB)_{\mathrm{tor}} \big)^{\Zar} = p_{\cB}(\iota(X)).
\]
As $(\mathfrak{A}_g|_B)/\cB \rightarrow B$ has relative dimension $g - g_{\cB} < g$,  we can apply the induction hypothesis on $g$ to get $\codim_{(\mathfrak{A}_g|_B)/\cB}(p_{\cB}(\iota(X))) \le \dim B$.

As both $\iota_{/\cB}$ and $\iota_{/\cB,G}$ are generically finite, we have $\dim (\mathfrak{A}_g|_B)/\cB = \dim \iota_{/\cB}((\mathfrak{A}_g|_B)/\cB) = \dim \mathfrak{A}_{g-g_{\cB}}|_{\iota_{\cB,G}(B)}$, $\dim p_{\cB}(\iota(X)) = \dim (\iota_{/\cB}\circ p_{\cB})(\iota(X))$ and $\dim B = \dim \iota_{/\cB,G}(B)$. So the left hand side of \eqref{EqRelativeMMLong} holds by the previous paragraph. Thus $\codim_{\cA}(X) \le \dim S$.
\end{proof}

\appendix
\renewcommand{\thesection}{\Alph{section}}
\setcounter{section}{0}

%% Section Appendix
\section{Discussion when the base takes some simple form}\label{SectionAppendixParticularBase}
Let $S$ be an irreducible subvariety over $\C$ and let $\pi_S \colon \cA \rightarrow S$ be an abelian scheme of relative dimension $g \ge 1$. Let $X$ be a closed irreducible subvariety of $\cA$ with $\dim \pi_S(X) = S$.% Recall the modular map \eqref{EqEmbedAbSchIntoUnivAbVar}
%\[
%\xymatrix{
%\cA \ar[r]^-{\iota} \ar[d]_{\pi_S} \pullbackcorner & \mathfrak{A}_g \ar[d] \\
%S \ar[r]^-{\iota_S} & \A_g
%}.
%\]

\begin{defn}\label{DefnGenSp}
Denote by $\langle X \rangle_{\textrm{gen-sp}}$ the smallest subvariety of $\cA$ of the following form which contains $X$:
Up to taking a finite covering of $S$, we have $\langle X \rangle_{\textrm{gen-sp}} = \sigma + \cZ + \cB$, where $\cB$ is an abelian subscheme of $\cA \rightarrow S$, $\sigma$ is a torsion section of $\cA \rightarrow S$, and $\cZ = Z \times S$ where $C \times S$ is the largest constant abelian subscheme of $\cA \rightarrow S$ and $Z \subseteq C$.
%\end{enumerate}
\end{defn}

If we furthermore require $Z$ to be a point, then we obtain $\langle X \rangle_{\mathrm{sg}}$ (Definition~\ref{DefnSubvarOfsgType}).% For a fixed $X$, it is easy to see $\langle X \rangle_{\mathrm{gen-sp}} \subseteq \langle X \rangle_{\mathrm{sg}}$.% and $\dim \langle X \rangle_{\mathrm{gen-sp}} - \dim S = \dim \langle \iota(X) \rangle_{\mathrm{gen-sp}} - \dim \iota_S(S)$.

Let $\cA_X$ be the translate of an abelian subscheme of $\cA \rightarrow S$ by a torsion section which contains $X$, minimal for this property. Then $\cA_X \rightarrow S$ itself is an abelian scheme, whose relative dimension we denote by $g_X$. It is easy to see $\langle X \rangle_{\mathrm{gen-sp}} \subseteq \langle X \rangle_{\mathrm{sg}} \subseteq \cA_X$.

The variety $\langle X \rangle_{\textrm{gen-sp}}$ is closely related to $\mathrm{rank}_{\R}(\mathrm{d}b_{\Delta}|_X)$. Let us start with the following proposition, saying that $\mathrm{rank}_{\R}(\mathrm{d}b_{\Delta}|_X)$ attains its minimal value if and only if $X = \langle X \rangle_{\textrm{gen-sp}}$.
\begin{prop}
$\mathrm{rank}_{\R}(\mathrm{d}b_{\Delta}|_X) = 2(\dim X - \dim S) \Longleftrightarrow X = \langle X \rangle_{\mathrm{gen-sp}}$.

In particular if $\dim S = 1$, then either $\mathrm{rank}_{\R}(\mathrm{d}b_{\Delta}|_X) = 2 \dim X$ or $X = \langle X \rangle_{\mathrm{gen-sp}}$.
\end{prop}
\begin{proof} The direction $\Leftarrow$ is not hard to check. Let us prove $\Rightarrow$ now.

Apply Theorem~\ref{ThmCriterionDegIntro}.(1) to $l = \dim X - \dim S + 1$. Then $\mathrm{rank}_{\R}(\mathrm{d}b_{\Delta}|_X) < 2(\dim X - \dim S + 1)$ if and only if there exists an abelian subscheme $\cB$ of $\cA_X \rightarrow S$ such that
\[
\dim (\iota_{/\cB} \circ p_{\cB})(X) \le \dim X - \dim S - g_{\cB}
\]
for
\[
\xymatrix{
\cA_X \ar[r]^-{p_{\cB}} \ar[d]_{\pi_S|_{\cA_X}} & \cA_X/\cB \ar[d]_{\pi_{/\cB}} \ar[r]^-{\iota_{/\cB}} \pullbackcorner & \mathfrak{A}_{g_X-g_{\cB}} \ar[d]^{\pi'} \\
S \ar[r]^-{\mathrm{id}_S} & S \ar[r]^-{\iota_{/\cB,S}} & \A_{g_X-g_{\cB}} .
}
\]
Thus 
%\[
$2(\dim X - \dim S) = \mathrm{rank}_{\R}(\mathrm{d}b_{\Delta}|_X) \le 2g_{\cB} + 2\dim (\iota_{/\cB} \circ p_{\cB})(X) \le 2(\dim X - \dim S)$. 
%\]
This implies $\dim p_{\cB}(X) = \dim X - g_{\cB}$, and $\dim (\iota_{/\cB}\circ p_{\cB})(X) = \dim X - \dim S - g_{\cB} = \dim p_{\cB}(X) - \dim S$. So $p_{\cB}|_X$ has relative dimension $g_{\cB}$, and $\iota_{/\cB}|_{p_{\cB}(X)}$ has relative dimension $\dim S$. So $X = p_{\cB}^{-1}(p_{\cB}(X))$ as $p_{\cB}$ has relative dimension $g_{\cB}$, and $p_{\cB}(X)= \iota_{/\cB}^{-1}(\iota_{/\cB}(p_{\cB}(X)))$ with $\iota_{/\cB}$ having relative dimension $\dim S$. So $\iota_{/\cB,S}$ has relative dimension $\dim S$, making $\iota_{/\cB,S}(S)$ a point.

Now $X = (\iota_{/\cB}\circ p_{\cB})^{-1}(\iota_{/\cB}\circ p_{\cB})(X)$ and $\iota_{/\cB,S}(S)$ is a point. So $X = \langle X \rangle_{\mathrm{gen-sp}}$.
\end{proof}

To further investigate the relation between $\mathrm{rank}_{\R}(\mathrm{d}b_{\Delta}|_X)$ with $\langle X \rangle_{\textrm{gen-sp}}$, let us first of all recall the modular map \eqref{EqEmbedAbSchIntoUnivAbVar}
\[
\xymatrix{
\cA \ar[r]^-{\iota} \ar[d]_{\pi_S} \pullbackcorner & \mathfrak{A}_g \ar[d] \\
S \ar[r]^-{\iota_S} & \A_g .
}
\]
%Observe that $\dim \langle X \rangle_{\mathrm{gen-sp}} - \dim S = \dim \langle \iota(X) \rangle_{\mathrm{gen-sp}} - \dim \iota_S(S)$.

\begin{prop}\label{PropDegSimpleBase}
We have
\begin{enumerate}
\item[(i)] $\mathrm{rank}_{\R}(\mathrm{d}b_{\Delta}|_X) \le 2\min(\dim \iota(X), \dim \langle X \rangle_{\mathrm{gen-sp}} - \dim S)$.
\item[(ii)] Assume $\dim \iota_S(S) = 1$ or the connected algebraic monodromy group of $\iota_S(S)$ is simple. Then $\mathrm{rank}_{\R}(\mathrm{d}b_{\Delta}|_X) \ge 2\min(\dim \iota(X), \dim \langle X \rangle_{\mathrm{gen-sp}} - \dim S)$.
%\item[(iii)] Assume $\dim B = 1$ or the connected algebraic monodromy group of $B$ is simple. Assume furthermore that $\cA_X = \mathfrak{A}_g|_B$ and no non-trivial abelian subscheme of $\mathfrak{A}_g|_B \rightarrow B$ stabilizes $X$. Then the converse of (i) holds for $t = 1$, namely $\dim \langle X \rangle_{\mathrm{gen-sp}} - \dim B < \dim X + 1$ if $X = X^{\mathrm{deg}}(1)$.
\end{enumerate}
%In particular if $\dim \iota_S(S) = 1$, then $X = X^{\mathrm{deg}}$ if and only if $X = \langle X \rangle_{\mathrm{gen-sp}}$. If furthermore $X$ satisfies the hypotheses in (iii), then $X = X^{\mathrm{deg}}(1)$ if and only if $\codim_{\langle X \rangle_{\mathrm{gen-sp}}}(X) \le 1$.
\end{prop}

Before proving this proposition, let us see its direct corollary on the Betti rank.% (combined with Theorem~\ref{ThmDegLocusIntro}).
\begin{cor}\label{CorBettiRankSimpleBase}
Assume that $\iota_S(S)$ has dimension $1$ or has simple connected algebraic monodromy group. Then
\[
\mathrm{rank}_{\R}(\mathrm{d}b_{\Delta}|_X) = 2 \min(\dim \iota(X), \dim \langle X \rangle_{\mathrm{gen-sp}} - \dim S).%\Leftrightarrow \dim \langle \iota(X) \rangle_{\mathrm{gen-sp}} - \dim \iota_S(S) < l
\]
\end{cor}
We point out that the hypotheses in Corollary~\ref{CorBettiRankSimpleBase} cannot be removed; see Example~\ref{EgCounterexampleACZ}.

\begin{rmk}\label{RmkDegSimpleBase}
Let $\mathfrak{C}_g$ be the universal curve embedded in $\mathfrak{A}_g$. Use notations in Theorem~\ref{ThmUnivCurveBettiRankIntro} and Theorem~\ref{ThmUnivCurveBettiRankIntroBis}. It is clearly true that $\langle \mathfrak{C}_S^{[m]} \rangle_{\mathrm{gen-sp}} = \mathfrak{A}_{mg} \times_{\A_{mg}} S$ for all $m \ge 1$.\footnote{Here $S$ is a subvariety of $\A_g$, and $\A_g$ is seen as a subvariety of $\A_{mg}$ via the diagonal embedding.} If $S$ has dimension $1$ or has simple connected algebraic monodromy group (for example when $S$ is the whole Torelli locus), then Corollary~\ref{CorBettiRankSimpleBase} has the following immediate corollaries: $\mathfrak{C}_S^{[m]}$ has maximal generic Betti rank for all $m \ge 3$ and $g \ge 2$, $\mathscr{D}_m(\mathfrak{C}_S^{[m+1]})$ has maximal generic Betti rank for all $m \ge 4$ and $g \ge 2$, and $\mathfrak{C}_S^{[m]} - \mathfrak{C}_S^{[m]}$ has maximal generic Betti rank for all $m \ge 4$ when $g \ge 5$, for all $m \ge 5$ when $g = 4$ and for all $m \ge 6$ for $g = 3$.
\end{rmk}

\begin{proof}[Proof of Proposition~\ref{PropDegSimpleBase}]
\begin{enumerate}
\item[(i)] We have seen $\mathrm{rank}_{\R}(\mathrm{d}b_{\Delta}|_X) \le 2\dim \iota(X)$ at the end of $\mathsection$\ref{SectionBettiRevisited}. On the other hand it is not hard to check $\mathrm{rank}_{\R}(\mathrm{d}b_{\Delta}|_{\langle X \rangle_{\mathrm{gen-sp}}}) = 2 (\dim \langle X \rangle_{\mathrm{gen-sp}} - \dim S)$ by the definitions of $b_{\Delta}$ and $\langle X \rangle_{\mathrm{gen-sp}}$. Hence we are done because $\mathrm{rank}_{\R}(\mathrm{d}b_{\Delta}|_X) \le \mathrm{rank}_{\R}(\mathrm{d}b_{\Delta}|_{\langle X \rangle_{\mathrm{gen-sp}}})$.

\item[(ii)] % Assume $X = X^{\mathrm{deg}}(t)$. We wish to prove $\dim \langle X \rangle_{\mathrm{gen-sp}} - \dim B < \dim X + t$.
Set $l = \min(\dim\iota(X), \dim \langle X \rangle_{\mathrm{gen-sp}}- \dim S)$. Assume $\mathrm{rank}_{\R}(\mathrm{d}b_{\Delta}|_X) < 2l$. Then by Theorem~\ref{ThmCriterionDegIntro}, there exists an abelian subscheme $\cB$ of $\cA_X \rightarrow B$ (whose relative dimension we denote by $g_{\cB}$) such that 
\begin{equation}\label{EqDegInequaAAA}
\dim (\iota_{/\cB} \circ p_{\cB})(X) < \min(\dim\iota(X), \dim \langle X \rangle_{\mathrm{gen-sp}}- \dim S) - g_{\cB}
\end{equation}
for% the map $\iota_{/\cB} \circ p_{\cB}$ constructed in \eqref{EqGeomMeaningQuotShimuraCrit}
\begin{equation}\label{EqDiagInequaAAA}
\xymatrix{
\cA_X \ar[r]^-{p_{\cB}} \ar[d]_{\pi_S|_{\cA_X}} & \cA_X/\cB \ar[d]_{\pi_{/\cB}} \ar[r]^-{\iota_{/\cB}} \pullbackcorner & \mathfrak{A}_{g_X-g_{\cB}} \ar[d]^{\pi'} \\
S \ar[r]^-{\mathrm{id}_S} & S \ar[r]^-{\iota_{/\cB,S}} & \A_{g_X-g_{\cB}} .
}
\end{equation}
%we have that $\iota_{/\cB} \circ p_{\cB}$ is not generically finite and 

Observe that $\langle X \rangle_{\mathrm{gen-sp}} \subseteq p_{\cB}^{-1}\left(\iota_{/\cB}^{-1}\left( \langle (\iota_{/\cB} \circ p_{\cB})(X) \rangle_{\mathrm{gen-sp}}\right)\right)$. So
\begin{align*}
\dim \langle X \rangle_{\mathrm{gen-sp}} - \dim S \le \dim(\cB\rightarrow S) + \dim \langle (\iota_{/\cB} \circ p_{\cB})(X) \rangle_{\mathrm{gen-sp}} = g_{\cB} + \dim \langle (\iota_{/\cB} \circ p_{\cB})(X) \rangle_{\mathrm{gen-sp}},
\end{align*}
and thus $(\iota_{/\cB} \circ p_{\cB})(X) \not= \langle (\iota_{/\cB} \circ p_{\cB})(X) \rangle_{\mathrm{gen-sp}}$ by \eqref{EqDegInequaAAA}. So $\iota_{/\cB,S}(S)$ is not a point.
%Let $\cC \rightarrow S_0$ be the image of a splitting of $M_0|_{S_0}$. Then $\cC$ is isotrivial, $M_0|_{S_0} = \cB + \cC$ and $\cB \cap \cC$ is a finite group over $S_0$.

Apply the modular map $\iota \colon \cA \rightarrow \mathfrak{A}_g$ (and $\iota_S \colon S \rightarrow \mathbb{A}_g$) to \eqref{EqDiagInequaAAA}. Denote by $B = \iota_S(S)$. We have
\[
\xymatrix{
\iota(\cA_X) \ar[r]^-{p'_{\cB}} \ar[d]_{\pi|_{\iota(\cA_X)}} & \iota(\cA_X)/\iota(\cB) \ar[d] \ar[r]^-{\iota'_{/\cB}} \pullbackcorner & \mathfrak{A}_{g_X-g_{\cB}} \ar[d]^{\pi'} \\
B \ar[r]^-{\mathrm{id}_B} & B \ar[r]^-{\iota'_{/\cB,S}} & \A_{g_X-g_{\cB}} .
}
\]

Let $G_B$ be the connected algebraic monodromy group of $B \subseteq \mathbb{A}_g$, and let $G_Q$ be the Mumford-Tate group of $B \subseteq \A_g$. Then $G_B < G_Q^{\mathrm{der}}$.

We have that $(\iota(\cA_X)/\iota(\cB))|_{\iota_{/\cB,S}^{'-1}(a)}$ is an isotrivial abelian scheme over $\iota_{/\cB,S}^{'-1}(a)$. Take $a$ to be Hodge generic in $\iota'_{/\cB,S}(B)$, then by a theorem of Deligne-Andr\'{e} \cite[$\mathsection$5, Theorem~1]{AndreMumford-Tate-gr}, the connected algebraic monodromy group $H$ of $\iota_{/\cB,S}^{'-1}(a)$ is a normal subgroup of $G_Q^{\mathrm{der}}$. Moreover $H<G_B$ since $\iota_{/\cB,S}^{'-1}(a) \subseteq B$. Thus $H\lhd G_B$.

Recall the assumption: either $\dim B = 1$ or $H$ is simple. In the first case, either $\iota'_{/\cB,S}(B)$ is a point or $\iota'_{/\cB,S}$ is quasi-finite. In the second case, either $H = G_B$ or $H = 1$ by the previous paragraph. If $H = G_B$, then the fact that $(\iota(\cA_X)/\iota(\cB))|_{\iota_{/\cB,S}^{'-1}(a)} \rightarrow \iota_{/\cB,S}^{'-1}(a)$ is isotrivial implies that $\iota(\cA_X)/\iota(\cB) \rightarrow B$ is isotrivial. Hence $\iota'_{/\cB,S}(B)$ is a point.  If $H = 1$, then $\dim \iota_{/\cB,S}^{'-1}(a) = 0$, and hence $\iota'_{/\cB,S}$ is generically quasi-finite. 

As $\iota'_{/\cB,S}(B) = \iota_{/\cB,S}(S)$ is not a point, we have that $\iota'_{/\cB,S}$ is generically quasi-finite. So $\iota'_{\cB}$ is generically quasi-finite, and so 
%\[%\begin{equation}\label{EqDegInequaAAA2}
$\dim (\iota_{\cB}\circ p_{\cB})(X) = \dim (\iota'_{/\cB} \circ p'_{\cB})(\iota(X)) = \dim p'_{\cB}(\iota(X)) \ge \dim \iota(X) - g_{\cB}$, 
%\]%\end{equation}
contradicting \eqref{EqDegInequaAAA}. \qedhere%\eqref{EqDegInequaAAA2} contradict each other. Thus we are done.
\end{enumerate}
\end{proof}

%\addtocontents{toc}{\protect\setcounter{tocdepth}{0}}
\section{Decomposition of abelian subschemes}\label{SectionAppendixLinearAlgebra}
The goal of this appendix is to prove \eqref{EqNewDecomposition}.

\subsection{Linear Algebra}
Let $G$ be a reductive group and let $V$ be a finite dimensional $G$-representation, both defined over $\mathbb{Q}$.

Let $m \ge 1$ be an integer and let $W$ be a $G$-submodule of $V^{\oplus m}$. The goal of this section is to prove the following proposition.
\begin{prop}\label{PropLinearAlgebra}
There exists a $G$-linear isomorphism
\[
\tau \colon V^{\oplus m} \cong V^{\oplus m}
\]
such that
\[
\tau(W) = W_1 \oplus \cdots \oplus W_m
\]
for some $G$-submodules $W_1,\ldots,W_m$ of $V$. Moreover $\tau$ can be chosen such that $q_1\circ \tau = q_1$ where $q_1 \colon V^{\oplus m} \rightarrow V$ is the projection to the first factor.
\end{prop}

The key to prove this proposition is the following lemma.
\begin{lemma}\label{LemmaLinearAlgebra}
Let $q \colon V^{\oplus m} \rightarrow V^{\oplus m'}$ be a linear projection. Then there exists a $G$-homomorphism $i \colon V^{\oplus m'} \rightarrow V^{\oplus m}$ such that
\begin{enumerate}
\item[(i)] $q\circ i = 1_{V^{\oplus m'}}$;
\item[(ii)] $i(q(W)) = i(V^{\oplus m'}) \cap W$;
\item[(iii)] $i|_{q(W)}$ is injective.
\end{enumerate}
\end{lemma}
\begin{proof}
%We have $\ker q = \{0\}\oplus V^{\oplus (m-1)}$.
As $G$ is a reductive group and $\mathrm{char}\mathbb{Q}=0$, there exists a $G$-submodule $W_0$ of $W$ such that $W = (W\cap \ker q) \oplus W_0$. In particular $W_0 \cap \ker q = W_0 \cap W \cap \ker q = \{0\}$. Thus $W_0 + \ker q = W_0 \oplus \ker q$. Note that $q(W) = q(W_0)$, and $q|_{W_0}$ is injective.

Now $W_0 \oplus \ker q$ is a $G$-submodule of $V^{\oplus m}$. Again as $G$ is a reductive group and $\mathrm{char}\mathbb{Q}=0$, there exists a $G$-submodule $W_0'$ of $V^{\oplus m}$ such that $V^{\oplus m} = (W_0 \oplus \ker q) \oplus W_0'$. Moreover we claim that $W_0' \cap W = 0$. Indeed, $W = (W\cap \ker q) \oplus W_0 \subseteq \ker q \oplus W_0$, and $W_0' \cap (\ker q \oplus W_0) = 0$. Thus $(W_0 \oplus W_0') \cap W = W_0$.

As $V^{\oplus m} = (W_0 \oplus \ker q) \oplus W_0'$, we have that $q|_{W_0 \oplus W_0'} \colon W_0 \oplus W_0' \rightarrow V^{\oplus m'}$ is injective and both sides have the same dimension. So $q|_{W_0 \oplus W_0'}$ is an isomorphism.

Let $i \colon V^{\oplus m'} \rightarrow V^{\oplus m}$ be the composite of the inclusion $W_0 \oplus W_0' \subseteq V^{\oplus m}$ with $(q|_{W_0 \oplus W_0'})^{-1}$. Then $i(V^{\oplus m'}) = W_0\oplus W_0'$.

Let us show that $i$ is the desired map. Property (i) clearly holds. To see (ii) and (iii), recall that $q(W) = q(W_0)$, and hence $i|_{q(W)} = i|_{q(W_0)} = (q|_{W_0 \oplus W_0'})^{-1}|_{q(W_0)} \colon q(W_0) \cong W_0 = (W_0 \oplus W_0') \cap W  = i(V^{\oplus m'}) \cap W$.
\end{proof}

\begin{proof}[Proof of Proposition~\ref{PropLinearAlgebra}]
We prove the proposition by induction on $m$. When $m = 1$, the proposition trivially holds true.

Now for a general $m \ge 1$, suppose the proposition is proved for $1,\ldots, m-1$. 

Apply Lemma~\ref{LemmaLinearAlgebra} to $q_1 \colon V^{\oplus m} \rightarrow V$ the projection to the first component. We thus obtain $i_1 \colon V \rightarrow V^{\oplus m}$ with the three properties. In particular $V^{\oplus m} = i_1(V) \oplus \ker q_1$.% = i_1(V) \oplus (\{0\}\oplus V^{\oplus (m-1)})$.

%In particular $i_1(V) \cap \ker(q_1) = 0$.%Note that if $q_1(W) = 0$, then $i_1$ can be chosen to be the identification of $V$ with $V\oplus \{0\} \subseteq V^{\oplus m}$. In particular $i_1(V) \cap (\{0\}\oplus V^{\oplus (m-1)}) 

Let $q'_1 \colon V^{\oplus m} \rightarrow V^{\oplus (m-1)}$ be the quotient by $i_1(V)$. Then $\ker q'_1 \cap W = i_1(V) \cap W = i_1(q_1(W))$ by property (ii) of $i_1$. Let $i'_1 \colon V^{\oplus (m-1)} \rightarrow V^{\oplus m}$, $v \mapsto (0,v)$. Then $i'_1(V^{\oplus (m-1)}) = \ker q_1$ and from the last paragraph we have $V^{\oplus m} = i_1(V) \oplus i_1'(V^{\oplus (m-1)})$. Define
\begin{equation}\label{EqTau1}
\tau_1 \colon V^{\oplus m} \xrightarrow{(q_1, q'_1)} V \oplus V^{\oplus (m-1)} \xrightarrow{i_1 + i_1'} V^{\oplus m}.
\end{equation}
Then $\tau_1$ is a $G$-isomorphism, and $\tau_1(W) \subseteq i_1\circ q_1(W) \oplus i'_1 \circ q'_1(W)$. But $\dim i'_1 \circ q'_1(W) = \dim q'_1(W) = \dim W - \dim (\ker q'_1 \cap W) = \dim W - \dim i_1(q_1(W))$. Hence $\dim W = \dim \tau_1(W) \le \dim i_1\circ q_1(W) + \dim i'_1 \circ q'_1(W) = \dim i_1\circ q_1(W) + \dim W - \dim i_1\circ q_1(W) = \dim W$. Hence $\tau_1(W) = i_1\circ q_1(W) \oplus i'_1 \circ q'_1(W)$.

Set $W_1 = i_1\circ q_1(W)$ and $W_1^{\perp} = i'_1 \circ q'_1(W)$.

By induction hypothesis applied to $W_1^{\perp} \subseteq V^{\oplus (m-1)}$, there exists a $G$-linear $\tau_1' \colon V^{\oplus (m-1)} \cong V^{\oplus (m-1)}$ 
such that $\tau_1'(W_1^{\perp}) = W_2 \oplus \cdots \oplus W_m$ for some $G$-submodules $W_2,\ldots,W_m$ of $V$.

Let $\tau$ be the composite
\[
\tau \colon V^{\oplus m} \xrightarrow{\tau_1} V^{\oplus m} = V \oplus V^{\oplus (m-1)} \xrightarrow{(1_V,\tau_1')} V \oplus V^{\oplus (m-1)} = V^{\oplus m}.
\]
Then $\tau(W) = W_1 \oplus W_2 \oplus \cdots \oplus W_m$. Hence we are done for the construction of $\tau$.

Let us prove the ``Moreover'' part. By our construction of $\tau$, we have $q_1\circ \tau = q_1\circ (1_V, \tau_1') \circ \tau_1 =q_1\circ \tau_1 = q_1\circ (i_1 \circ q_1 + i'_1 \circ q'_1) = q_1$ as $q_1 \circ i_1 = 1_V$ and $q_1 \circ i'_1 = 0$.
\end{proof}

\subsection{Proof of \eqref{EqNewDecomposition}}
Now we are ready to prove \eqref{EqNewDecomposition}. Let $\pi_S \colon \cA \rightarrow S$ be an abelian scheme, $\cA^{[m]}$ be the $m$-fold fibered power, and $\cB$ be an abelian subscheme of $\pi_S^{[m]} \colon \cA^{[m]} \rightarrow S$.% Let $q_1 \colon \cA^{[m]} \rightarrow \cA$ be the projection to the first factor.

The variation of Hodge structures (V$\mathbb{Q}$HS) $(R^1 (\pi_S)_*\mathbb{Q})^\vee$ over $S$ is polarizable of type $(-1,0)+(0,-1)$. Its generic Mumford--Tate group $G$ is a reductive group defined over $\mathbb{Q}$. 

As $\mathrm{End}_{\text{V}\mathbb{Q}-\text{HS}}((R^1 (\pi_S)_*\mathbb{Q})^\vee)$ is a variation of Hodge structures of $S$ of weight $0$, there exists $s \in S(\mathbb{C})$ such that each element in $\mathrm{End}_{\mathbb{Q}-\text{HS}}((R^1 (\pi_S)_*\mathbb{Q})^\vee_s)$ extends to an element in $\mathrm{End}_{\text{V}\mathbb{Q}\text{HS}}((R^1 (\pi_S)_*\mathbb{Q})^\vee)$, up to replacing $S$ by a finite covering. See \cite[proof of Theorem~10.20]{PetersMixed-Hodge-Str}.

Set $V=(R^1 (\pi_S)_*\mathbb{Q})^\vee_s = H_1(\cA_s,\mathbb{Q})$. 
%By Hodge theory, the abelian subschemes of $\cA \rightarrow S$ are in 1-to-1 correspondence to the $G$-submodules of $V$; see \cite[4.4.1-4.4.3]{DeligneHodgeII}.
Apply Proposition~\ref{PropLinearAlgebra} to $W = H_1(\cB_s,\mathbb{Q})$. We obtain a $G$-isomorphism $\tau \colon V^{\oplus m} \cong V^{\oplus m}$ such that $
\tau(W) = W_1 \oplus \cdots \oplus W_m$
for some $G$-submodules $W_1,\ldots,W_m$ of $V$. Moreover $q_1 \circ \tau = q_1$ with $q_1 \colon V^{\oplus m} \rightarrow V$ the projection to the first factor.

For each $i \in \{1,\ldots,m\}$, $W_i = \ker \alpha_i$ for some $\alpha_i \in \mathrm{End}_G(V)$. But $\mathrm{End}_G(V) \subseteq \mathrm{End}_{\mathbb{Q}-\text{HS}}(V)$, so by the discussion above each $\alpha_i$ extends to an element in $\mathrm{End}_{\text{V}\mathbb{Q}\text{HS}}((R^1 (\pi_S)_*\mathbb{Q})^\vee)$, which by abuse of notation is still denoted by $\alpha_i$. Similarly $\tau$ extends to some $\rho \in \mathrm{End}_{\text{V}\mathbb{Q}\text{HS}}((R^1 (\pi_S^{[m]})_*\mathbb{Q})^\vee)$. We then have $\rho\left((R^1 (\pi_S^{[m]}|_{\cB})_*\mathbb{Q})^\vee\right) = \ker(\alpha_1) \times_S \ldots \times_S \ker(\alpha_m)$.

By \cite[Rappel~4.4.3]{DeligneHodgeII}, $\rho$ gives rise to an isogeny $\cA^{[m]} \rightarrow \cA^{[m]}$ and each $\alpha_i$ gives rise to an isogeny $\cA \rightarrow \cA$. By abuse of notation we still use $\rho$, $\alpha_i$ to denote them. Then $\rho(\cB) = \ker(\alpha_i) \times_S \ldots \times_S \ker(\alpha_m)$. It suffices to take $\cB_i = \ker(\alpha_i)$.

The ``Moreover'' part of \eqref{EqNewDecomposition} follows from the same argument and the equality $q_1 \circ \tau = q_1$ above.

\bibliographystyle{alpha}
\bibliography{bibliography}

%\begin{center}
%  \today
%\end{center}

\end{document}